\newtheorem{theorem}{Theorem}[section]
\newtheorem*{theorem*}{Theorem}
\newtheorem{lemma}[theorem]{Lemma}
\newtheorem{proposition}[theorem]{Proposition}
\newtheorem{corollary}[theorem]{Corollary}
\theoremstyle{definition}
\newtheorem{definition}[theorem]{Definition}
\newtheorem{remark}[theorem]{Remark}
\newtheorem{example}[theorem]{Example}
\theoremstyle{plain}
\newcommand{\C}{\mathbb{C}}
\newcommand{\R}{\mathbb{R}}
\newcommand{\N}{\mathbb{N}}
\newcommand{\Z}{\mathbb{Z}}
\DeclareMathOperator{\cpc}{Cap}
\DeclareMathOperator{\Newt}{Newt}
\DeclareMathOperator{\supp}{supp}
\DeclareMathOperator{\per}{per}
\DeclareMathOperator{\Mat}{Mat}
\DeclareMathOperator{\Prod}{Prod}
\DeclareMathOperator{\HStab}{HStab}
\DeclareMathOperator{\SLC}{SLC}
\DeclareMathOperator{\LC}{LC}
\DeclareMathOperator{\NHMat}{NHMat}
\DeclareMathOperator{\NHProd}{NHProd}
\DeclareMathOperator{\NHStab}{NHStab}
\title{Capacity Lower Bounds via Productization}
\author{Leonid Gurvits and Jonathan Leake}
\begin{document}

\maketitle

\begin{abstract}
    We give a sharp lower bound on the capacity of a homogeneous real stable polynomial, depending only on the value of its gradient at $x = \bm{1}$.
    This result implies a sharp improvement to a similar inequality proved by Linial-Samorodnitsky-Wigderson in 2000 \cite{linial2000deterministic}, which was crucial to the analysis of their permanent approximation algorithm.
    Such inequalities have played an important role in the recent work on operator scaling and its generalizations and applications \cite{garg2018algorithmic}, and in fact we use our bound to construct a new scaling algorithm for real stable polynomials.
    %
    %
    
    In addition, we give a strong improvement on previous lower bounds of the capacity of a non-homogeneous real stable polynomial, depending only on the value of its gradient at $x = \bm{1}$.
    Crucially, this new bound is independent of the degree of the polynomial, and has singly exponential dependence on the number of variables.
    This compares favorably to the bounds used recently in the fantastic work of Karlin-Klein-Oveis Gharan to give an improved approximation factor for metric TSP \cite{karlin2020slightly}, where this dependence is doubly exponential.
    Such bounds were conjectured to exist by the authors of \cite{karlin2020slightly}, and thus our new bound should imply further improvement to the approximation factor for metric TSP.
    
    The new technique we develop to prove this bound is \emph{productization}, which says that any real stable polynomial can be approximated at any point in the positive orthant by a product of linear forms.
    Beyond the results of this paper, our main hope is that this new technique will allow us to avoid ``frightening technicalities'', in the words of Laurent and Schrijver, that often accompany combinatorial lower bounds.
    %
\end{abstract}

\newpage

\tableofcontents

\newpage

\section{Introduction}

In \cite{linial2000deterministic}, Linial, Samorodnitsky, and Wigderson gave the first deterministic algorithm for approximating the permanent of a matrix with non-negative entries within an exponential factor.
Their work has been wildly influential within TCS, spawning a line of work which has even recently continued to prove fruitful, for example approximating the mixed discriminant and mixed volume \cite{gurvits2000deterministic}, non-commutative polynomial identity testing \cite{garg2019operator,garg2016deterministic,allen2018operator}, computing Brascamp-Lieb constants \cite{garg2018algorithmic}, tensor scaling problems \cite{burgisser2018efficient,franks2018operator,burgisser2019towards}, and null-cone problems \cite{burgisser2017alternating}.
At the heart of their work is a complexity analysis of the classical Sinkhorn scaling algorithm \cite{sinkhorn1964relationship}, which is an iterative process for transforming a matrix to be doubly stochastic (all row and column sums equal to 1).
Once the matrix is doubly stochastic, the classical Van der Waerden inequality for the permanent of a doubly stochastic matrix gives the desired exponential approximation.
The crux of their analysis relies on a bound on the stability of the Sinkhorn scaling procedure: if a matrix is only \emph{close} to being doubly stochastic, they give a bound on how far off the Van der Waerden inequality can be.
This is crucial, due to the iterative nature of the algorithm.

The original purpose of this paper was then to generalize this bound to the realm of stable polynomials, and perhaps even beyond to log-concave/Lorentzian polynomials \cite{anari2018log,branden2019lorentzian}.
Since the introduction of polynomial capacity in \cite{gurvits2006hyperbolic}, many similar combinatorial bounds and statements have been generalized in this way.
These bounds often give rise to exponential approximations for quantities which are \#P-hard to compute exactly, see \cite{anari2017generalization,straszak2017real,anari2018log}.
This generalization to stable polynomials has not yet been done for the Sinkhorn scaling bound of Linial-Samorodnitsky-Wigderson, and this is one of our main results.
What was surprising to us is that in the process of generalizing this bound, we were also able to strengthen the bound, and in fact our bound is tight even in the original context of Sinkhorn scaling.

What led to this strengthening was the investigation of a related question.
For a given non-negative vector $\alpha \in \R^n$, let $\Mat_n(\alpha)$ be the set of all $n \times n$ matrices $A$ with non-negative entries such that the row sums of $A$ are all 1 and the column sums are given by $\alpha_1,\ldots,\alpha_n$.
Now the question:
\[
    \text{For which $\alpha$ is it the case that \emph{all} matrices in $\Mat_n(\alpha)$ have positive permanent?}
\]
The Van der Waerden inequality implies the all-ones vector is one such ``good'' value of $\alpha$.
What is remarkable is that the set of all such good vectors can be explicitly described as the set of all $\alpha$ for which $\|\alpha - 1\|_1 < 2$ (Theorem \ref{thm:main-marginals}).
A analytic version of this fact then precisely gives our strengthening of the Linial-Samorodnitsky-Wigderson bound (Theorem \ref{thm:main}).

Beyond this strengthening of the bound, our main result is extending it to the realm of stable polynomials, and the key idea here is the notion of \emph{productization}.
The typical way the matrix permanent problem is related to stable polynomials is via the map $A \mapsto f_A(x) := \prod_{i=1}^n (Ax)_i$ which maps an $n \times n$ matrix with non-negative entries to a product of linear forms in $n$ variables.
The permanent of $A$ is precisely the coefficient of $x_1x_2 \cdots x_n$ in $f_A$, and thus the permanent approximation problem is a specific case of the more general stable polynomial coefficient approximation.

Our main idea of productization is then a local converse to this fact.
Morally speaking, it says that any homogeneous real stable polynomial can be locally approximated by such a product of linear forms at any specific point in the positive orthant (Theorem \ref{thm:main-productization}).
That is, for any such $p$ and $x$, there is an $A$ such that
\[
    p(x) = \prod_{i=1}^n (Ax)_i \qquad \text{and} \qquad \nabla p(\mathbf{1}) = \nabla \left[\prod_{i=1}^n (Ax)_i\right](\mathbf{1}).
\]
This fact immediately implies the extension of the strengthened Linial-Samorodnitsky-Wigderson bound to stable polynomials as a corollary (see Corollary \ref{cor:hstab-cap-bound}).
The power of this technique is that is allows one to use combinatorial arguments on non-negative matrices (products of linear forms) to prove more general statements about stable polynomials.
And further, this technique may extend to log-concave polynomials in general as well.
Because of this, we believe that this productization technique has the potential to drive new connections between TCS and the various log-concave polynomial classes.

As a specific example, our main bound for non-homogeneous polynomials has a strong resemblance to the bounds utilized in the recent exciting work on metric TSP by Karlin, Klien, and Oveis Gharan \cite{karlin2020slightly}.
The bounds of \cite{karlin2020slightly} are doubly exponential in the number of variables, but they are still useful because they are independent of degree.
The bound we achieve here (see Theorem \ref{thm:main-TSP}, and also Theorem \ref{thm:main_nh}) is not only independent of degree, but it is also singly exponential in the number of variables.
Such an improvement was conjectured to be possible by the authors of \cite{karlin2020slightly} (see Section 1.2.2 in \cite{karlin2020slightly}), and thus our new bound will imply further improvement to the approximation factor for metric TSP.
We believe that our bounds here have the potential to imply similar improvements to other approximation algorithm results.


\section{Main Results}


Our first main result is a sharp improvement of the Linial-Samorodnitsky-Wigderson bound used in \cite{linial2000deterministic} to give a deterministic exponential approximation to the permanent of a matrix with non-negative entries.

\begin{theorem}[= Corollary \ref{cor:hstab-cap-bound}, Main homogeneous capacity bound] \label{thm:main}
    Let $p(x)$ be a homogeneous real stable polynomial of degree $n$ in $n$ variables.
    If $p(\mathbf{1}) = 1$ and $\|\mathbf{1} - \nabla p(\mathbf{1})\|_1 < 2$, then
    \[
        \inf_{x_1,\ldots,x_n > 0} \frac{p(x)}{x_1 \cdots x_n} \geq \left(1 - \frac{\|\mathbf{1} - \nabla p(\mathbf{1})\|_1}{2}\right)^n.
    \]
\end{theorem}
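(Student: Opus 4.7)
My plan is to use productization (Theorem \ref{thm:main-productization}) to reduce the bound for a general real stable $p$ to a uniform capacity estimate for products of linear forms whose underlying matrix lies in $\Mat_n(\alpha)$, where $\alpha := \nabla p(\mathbf{1})$, and then to establish that uniform estimate using the marginals theorem (Theorem \ref{thm:main-marginals}).

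Productization asserts that for every $x > 0$ there is a nonnegative $n \times n$ matrix $A_x$ such that $p(x) = \prod_i (A_x x)_i$ and $\nabla [\prod_i (A_x y)_i](\mathbf{1}) = \nabla p(\mathbf{1})$. After a diagonal row-rescaling (which, using $p(\mathbf{1}) = 1$ and Euler's identity, preserves both equations), I would normalize so that $A_x$ has all row sums equal to $1$; the gradient condition then forces the column sums of $A_x$ to equal $\alpha$, so $A_x \in \Mat_n(\alpha)$. Setting $q_x(y) := \prod_i (A_x y)_i$, we obtain
\[
    \frac{p(x)}{\prod_j x_j} = \frac{q_x(x)}{\prod_j x_j} \geq \cpc(q_x)
\]
for every $x > 0$, so $\cpc(p) \geq \inf_{A \in \Mat_n(\alpha)} \cpc(\prod_i (Ay)_i)$.

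The reduced problem is then to show $\cpc(\prod_i (Ay)_i) \geq (1 - \|\mathbf{1}-\alpha\|_1/2)^n$ uniformly for $A \in \Mat_n(\alpha)$, assuming $\|\mathbf{1}-\alpha\|_1 < 2$. Here I would invoke Theorem \ref{thm:main-marginals}, which characterizes exactly this regime as the one in which every matrix in $\Mat_n(\alpha)$ has strictly positive permanent, as the qualitative backbone. To convert this into the sharp quantitative bound, I envision either (i) decomposing $A$ as a convex combination of a doubly stochastic matrix of weight $1 - \|\mathbf{1}-\alpha\|_1/2$ plus a residual matrix absorbing the $L_1$ deficit, then applying the Van der Waerden-type capacity identity to the doubly stochastic core (whose product-of-linear-forms capacity equals $1$, attained at $\mathbf{1}$) and propagating the weight factor through the $n$-fold product; or (ii) a direct KKT analysis of $\inf_y \log \prod_i (Ay)_i - \sum_j \log y_j$, which for products of linear forms reduces to a Sinkhorn-type fixed-point system admitting a closed-form solution in terms of $\alpha$.

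The main obstacle is this final quantitative combinatorial step: Theorem \ref{thm:main-marginals} supplies positivity but not a sharp exponential bound, and extracting the factor $(1 - \|\mathbf{1}-\alpha\|_1/2)^n$ requires either a careful decomposition of $A$ or an explicit extremal argument, together with a matching extremal construction (an $A \in \Mat_n(\alpha)$ whose mass is concentrated so as to saturate the $L_1$ constraint) to verify tightness. A secondary technical point is ensuring that the row normalization of $A_x$ is compatible with both the value equation at $x$ and the gradient equation at $\mathbf{1}$, which is precisely the interplay that the productization theorem is designed to guarantee.
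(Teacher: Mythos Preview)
Your overall plan matches the paper's exactly: reduce via productization to products of linear forms with $A \in \Mat_n(\alpha)$, then prove the capacity bound uniformly over such products. Your row-normalization worry is harmless: since $f(\mathbf{1}) = \prod_i r_i = 1$, rescaling row $i$ by $r_i^{-1}$ has unit determinant, preserves $f(x)$ at every $x$, and makes the column sums equal $\nabla f(\mathbf{1}) = \alpha$; in fact the paper's productization (Theorem~\ref{thm:hstab-prod}) already outputs $A \in \Mat_n(\alpha)$ directly, so no normalization is needed.

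The real gap is in your option (i), which you correctly flag as the main obstacle but do not resolve. You propose to decompose $A$ with a doubly stochastic piece of weight exactly $1 - \|\mathbf{1}-\alpha\|_1/2$, without saying why such a decomposition exists. The paper's argument (Theorem~\ref{thm:prod-bound}) runs in the opposite direction: take the \emph{maximal} $\gamma_0$ such that $A - \gamma_0 D \geq 0$ entrywise for some doubly stochastic $D$ (this exists by compactness of the Birkhoff polytope). Maximality forces the residual $M := (A - \gamma_0 D_0)/(1-\gamma_0) \in \Mat_n(\tilde\alpha)$, where $\tilde\alpha = (\alpha - \gamma_0)/(1-\gamma_0)$, to satisfy $\per(M) = 0$; otherwise a permutation matrix supported in $M$ could be peeled off, contradicting maximality of $\gamma_0$. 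Now invoke Theorem~\ref{thm:main-marginals} \emph{in contrapositive form} on $M$: since some matrix in $\Mat_n(\tilde\alpha)$ has zero permanent, we must have $\|\mathbf{1}-\tilde\alpha\|_1 \geq 2$, which unwinds to $\gamma_0 \geq 1 - \|\mathbf{1}-\alpha\|_1/2$. Finally $A \geq \gamma_0 D_0$ entrywise gives $\cpc_\mathbf{1}\bigl(\prod_i (Ax)_i\bigr) \geq \gamma_0^n \cdot \cpc_\mathbf{1}\bigl(\prod_i (D_0 x)_i\bigr) = \gamma_0^n$. So the marginals theorem is not used to certify positivity for $A$ itself, but contrapositively on the residual to lower-bound the doubly stochastic weight; this is the missing idea in your sketch. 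Your option (ii), a direct KKT analysis, does not appear in the paper and would not obviously yield the sharp constant.
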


\noindent
The bound of Linial-Samorodnitsky-Wigderson took a similar form, but with the more restrictive assumption that $\|\mathbf{1} - \nabla p(\mathbf{1})\|_2 < \frac{1}{\sqrt{n}}$.
The fact that our assumption that $\|\mathbf{1} - \nabla p(\mathbf{1})\|_1 < 2$ is as strong as possible is an immediate consequence of our next result.

\begin{theorem}[= Proposition \ref{prop:hall-cap-norm}, Marginals which imply positive permanent] \label{thm:main-marginals}
    Given a non-negative vector $\alpha \in \R^n$, let $\Mat_n(\alpha)$ be the set of all $n \times n$ matrices $A$ with non-negative entries such that the row sums of $A$ are all equal to 1 and the column sums of $A$ are equal to $\alpha_1,\ldots,\alpha_n$.
    The following are equivalent.
    \begin{enumerate}
        \item The permanent of every matrix in $\Mat_n(\alpha)$ is strictly positive.
        \item $\|\bm{1} - \alpha\|_1 < 2$.
        \item $\sum_{i \in F} \alpha_i > |F| - 1$ for all $F \subseteq [n]$.
    \end{enumerate}
\end{theorem}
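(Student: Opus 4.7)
The plan is to establish $(2) \Leftrightarrow (3)$ by a short normalization computation and $(1) \Leftrightarrow (3)$ by a Hall's-theorem style argument in both directions.

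For $(2) \Leftrightarrow (3)$, observe that any $A \in \Mat_n(\alpha)$ has total entry sum $n$ (summing across rows), hence $\sum_i \alpha_i = n$ and $\sum_i (1-\alpha_i) = 0$. Splitting into positive and negative parts yields
\[
    \|\bm{1}-\alpha\|_1 = 2\sum_i (1-\alpha_i)_+,
\]
while $\max_{F \subseteq [n]} \sum_{i \in F}(1-\alpha_i) = \sum_i (1-\alpha_i)_+$, attained at $F = \{i : \alpha_i \leq 1\}$. Thus both (2) and (3) are equivalent to the single inequality $\sum_i (1-\alpha_i)_+ < 1$.

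For $(3) \Rightarrow (1)$, I argue by contrapositive. If $A \in \Mat_n(\alpha)$ has $\per A = 0$, then by König--Egerváry (equivalently Hall's marriage theorem applied to the support bipartite graph) there exist row and column sets $R, C \subseteq [n]$ with $|R|+|C| \geq n+1$ and $A_{ij} = 0$ for all $i \in R$, $j \in C$. The column mass on $C$ is then supported entirely on the $n-|R|$ rows outside $R$, so
\[
    \sum_{j \in C}\alpha_j \;\leq\; n - |R| \;\leq\; |C| - 1,
\]
violating (3) at $F = C$. Conversely, given $F \subseteq [n]$ with $k := |F|$ and $\sum_{i \in F}\alpha_i \leq k-1$ (so $1 \leq k \leq n-1$), I construct $A \in \Mat_n(\alpha)$ whose support has an all-zero block on rows $R = \{1,\ldots,n-k+1\}$ and columns $F$. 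Since $|R|+|F| = n+1$, the support admits no perfect matching and $\per A = 0$. The column mass $\sum_{i \in F}\alpha_i \leq k-1$ on $F$ is distributed across the $k-1$ rows outside $R$, and the column mass $n - \sum_{i \in F}\alpha_i \geq n-k+1$ on $[n]\setminus F$ first covers the $n-k+1$ rows in $R$, with any slack absorbed by the remaining $k-1$ rows.

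The main obstacle is verifying the feasibility of this construction: one must realize the prescribed row sums (all 1) and column sums (given by $\alpha$) simultaneously on the two complementary sub-blocks. This reduces to a routine transportation-polytope non-emptiness check, since the row and column totals in each sub-block agree in aggregate; a northwest-corner style fill-in then produces an explicit $A$. With this in hand, the König-type argument for $(3) \Rightarrow (1)$ closes cleanly, and the computation for $(2) \Leftrightarrow (3)$ follows from the mass identity $\sum_i \alpha_i = n$.
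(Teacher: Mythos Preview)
Your proof is correct. The equivalence $(2)\Leftrightarrow(3)$ via the identity $\|\bm1-\alpha\|_1=2\sum_i(1-\alpha_i)_+$ and the construction for $\neg(3)\Rightarrow\neg(1)$ match the paper's arguments essentially verbatim (the paper builds the same $(k-1)\times k$ block with equal row sums $\beta=\sum_{i\in F}\alpha_i/(k-1)\le 1$ and fills the rest; your transportation-polytope justification is exactly what is implicit in the paper's ``$*$'' entries).

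The genuine difference is in the direction $(3)\Rightarrow(1)$. You argue directly: if some $A\in\Mat_n(\alpha)$ has $\per A=0$, K\"onig--Egerv\'ary gives an $|R|\times|C|$ zero block with $|R|+|C|\ge n+1$, and then the row-sum constraint forces $\sum_{j\in C}\alpha_j\le n-|R|\le |C|-1$. The paper instead detours through polynomial language: it shows that condition~(3) forces every $p\in\SLC_n(\alpha)$ to be a \emph{Hall polynomial} (via the gradient--degree inequality of Lemma~\ref{lem:nabla-deg-bound}), and then that Hall strongly log-concave polynomials have positive capacity (Lemma~\ref{lem:Hall-SLC}, proved by an inductive symmetric-exchange argument on the support). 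Positivity of capacity for every product polynomial then yields $\per A>0$ via Corollary~\ref{cor:cap-per}. Your route is shorter and entirely elementary for the statement as phrased; the paper's route is longer but simultaneously establishes the stronger equivalences with $L_n^{\SLC}(\alpha)>0$ and the Hall-polynomial condition, which it needs later.
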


\noindent
Note that the main capacity bound given above can be seen as an analytic version of this result.

We also prove another bound for non-homogeneous real stable polynomials, which leads to a strong improvement of the capacity bound of Karlin-Klein-Oveis Gharan used in \cite{karlin2020slightly} to give an improved approximation factor for metric TSP.
The key bound in \cite{karlin2020slightly} is doubly exponential in the number of variables; our result here leads to a simply exponential bound (see Corollary \ref{cor:imp_metric_TSP}).

\begin{theorem}[= Corollary \ref{cor:imp-cap-bound}, Main non-homogeneous capacity bound] \label{thm:main_nh}
    Let $p(x)$ be a real stable polynomial in $n$ variables, and fix any $\kappa \in \Z^n$ with non-negative entries.
    If $p(\mathbf{1}) = 1$ and $\|\kappa - \nabla p(\mathbf{1})\|_1 < 1$, then
    \[
        \inf_{x_1,\ldots,x_n > 0} \frac{p(x)}{x_1^{\kappa_1} \cdots x_n^{\kappa_n}} \geq \left(1 - \|\kappa - \nabla p(\mathbf{1})\|_1\right)^n.
    \]
\end{theorem}

\noindent
Our main bounds are proven first for a specific class of stable polynomials: products of linear forms (see Theorem \ref{thm:prod-bound}).
This class is easier to work with due to its intimate relationship with the space of matrices.
To transfer the bound from products of linear forms to stable polynomials more generally, we use a new technique called \emph{productization}.
This bound transfer process is very general, which implies this productization technique is of independent interest.
We therefore state it as a result in its own right.

\begin{theorem}[= Theorem \ref{thm:hstab-prod}, Productization of real stable polynomials] \label{thm:main-productization}
    Let $p$ be a real stable homogeneous polynomial of degree $d$ in $n$ variables such that $p(\bm{1}) = 1$ and $\nabla p(\bm{1}) = \alpha$.
    For any $y \in \R^n$ in the positive orthant, there exists a product of linear forms $f(x) := \prod_{i=1}^d \sum_{j=1}^n a_{ij} x_j$ for which $a_{ij} \geq 0$ such that $f(\bm{1}) = 1$, $\nabla f(\bm{1}) = \alpha$, and $p(y) = f(y)$.
\end{theorem}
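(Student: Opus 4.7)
The plan is to recast the productization as a transportation feasibility problem, using the bivariate restriction of $p$ to the plane through $\bm{1}$ and $y$ to produce a concrete multiset of positive reals whose product is $p(y)$. The starting observation is that for any $A \in \Mat_n(\alpha)$, the product of linear forms $f_A(x) := \prod_{i=1}^n (Ax)_i$ automatically matches $p$ in value and gradient at $\bm{1}$: the row-sum condition $A\bm{1} = \bm{1}$ forces $f_A(\bm{1}) = 1$, and the column-sum condition $A^T \bm{1} = \alpha$ forces $\nabla f_A(\bm{1}) = \alpha$. So it only remains to produce an $A \in \Mat_n(\alpha)$ with $\prod_i (Ay)_i = p(y)$.

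To extract the target product, I would exploit the bivariate restriction $q(s, t) := p(sy + t\bm{1})$. Since $y, \bm{1}$ lie in the positive orthant, substitution by non-negative linear forms preserves real stability, so $q$ is bivariate real stable homogeneous of degree $n$ and factors over $\R$. Normalizing by $q(0, 1) = p(\bm{1}) = 1$ yields
\[
    q(s, t) = \prod_{i=1}^n (\nu_i s + t), \qquad \nu_i > 0.
\]
Direct evaluation gives $p(y) = q(1, 0) = \prod_i \nu_i$, while differentiating at $(0, 1)$ gives $\sum_i \nu_i = y \cdot \nabla p(\bm{1}) = \alpha \cdot y$. The $\nu_i$ are the hyperbolic eigenvalues of $y$ relative to $\bm{1}$ in $p$, so by Weyl-type inequalities on the hyperbolicity cone (which contains the positive orthant) we also have $\min_j y_j \leq \nu_i \leq \max_j y_j$ for every $i$.

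With these invariants in hand, the theorem reduces to finding $A \in \Mat_n(\alpha)$ with $Ay = \nu$ as an unordered multiset: then $f_A(y) = \prod_i (Ay)_i = \prod_i \nu_i = p(y)$ as required. Equivalently, one must show that $\nu$ lies in the image polytope $T(\Mat_n(\alpha)) := \{Ay : A \in \Mat_n(\alpha)\}$, which is cut out by Hall-type (transportation) facet inequalities indexed by subsets of $[n]$.

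The hard part will be this last feasibility claim. The trace identity and componentwise bounds supply only two of the many inequalities defining $T(\Mat_n(\alpha))$; the remaining Hall-type inequalities must be deduced from the real-stability-induced structure of the hyperbolic eigenvalues, presumably via majorization-style bounds for hyperbolic spectra. This is where the full strength of real stability is expected to enter, and the resulting combinatorial content should mirror the Hall-type condition appearing in Theorem \ref{thm:main-marginals}.
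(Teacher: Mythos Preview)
Your reduction to hyperbolic eigenvalues is the right starting point, and the identification of a matrix $A \in \Mat_n(\alpha)$ with $Ay = \nu$ as the target is correct. However, the two invariants you actually extract---the trace identity $\sum_i \nu_i = \alpha \cdot y$ and the Weyl bounds $\min_j y_j \le \nu_i \le \max_j y_j$---do \emph{not} suffice for feasibility of the system $\{A \ge 0 : A\bm{1}=\bm{1},\ A^\top\bm{1}=\alpha,\ Ay=\nu\}$. Concretely, take $n=4$, $y=(1,2,3,4)$, and $\alpha=(\epsilon,\,2-\epsilon,\,\epsilon,\,2-\epsilon)$ for small $\epsilon>0$. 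The vector $\nu=(1,4,4,3-2\epsilon)$ satisfies both of your conditions, yet $\nu_i=1$ forces $a_{i1}=1$ (the only way a row-stochastic vector can have expectation $\min_j y_j$), contradicting $\sum_i a_{i1}=\epsilon$. So the ``remaining Hall-type inequalities'' you flag are not a residual technicality; they are the entire content of the theorem, and you have not supplied them. What is needed is a weighted-majorization statement for hyperbolic spectra relative to $\alpha$, and no such statement is available off the shelf.

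The paper sidesteps this direct transportation analysis via a polarization trick. For rational $\alpha=(k_1/N,\ldots,k_n/N)$, it replaces each $x_i$ by the average of $k_i$ fresh variables and raises $p$ to the $N$-th power, obtaining $q \in \HStab_{nN}(\bm{1})$. In that doubly stochastic setting, Br\"and\'en's result (Theorem~\ref{thm:roots-majorization}) gives ordinary majorization $\lambda(y) \prec y$, hence a doubly stochastic $D$ with $Dy=\lambda$ by Hardy--Littlewood--P\'olya. Block-averaging $D$ back to an $n\times n$ matrix produces the desired $A \in \Mat_n(\alpha)$; irrational $\alpha$ is handled by a density/compactness limit. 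The polarization is precisely the device that manufactures the missing spectral inequalities in your approach, by packaging them as standard majorization one dimension up.
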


\noindent
Note that while we are able to use this technique to extend results to stable polynomials, a similar productization result for other classes of log-concave polynomials remains an open problem.

Beyond the original application to matrix scaling for approximating the permanent, such capacity bounds have also played an important role in the recent work on operator scaling and its generalizations and applications \cite{garg2018algorithmic}.
In a similar way, we can use our capacity bound to give a new scaling algorithm for stable polynomials in full generality.

\begin{theorem}[= Proposition \ref{prop:section-scaling-algo}, Stable polynomial scaling algorithm] \label{thm:main-scaling-algo}
    Let $p$ be a real stable homogeneous polynomial of degree $n$ in $n$ variables for which the coefficient of $x_1x_2 \cdots x_n$ is positive.
    There is an iterative algorithm for approximating, monotonically from above, the value of
    \[
        \inf_{x_1,\ldots,x_n > 0} \frac{p(x)}{x_1 \cdots x_n}.
    \]
    %
    For a given polynomial $p$, the rate of convergence is $O(1/t)$ where $t$ is the number of iterations.
\end{theorem}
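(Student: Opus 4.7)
The plan is to recast the capacity computation as a convex minimization in log-coordinates and then build a first-order descent scheme whose suboptimality gap can be controlled by the main capacity bound (Theorem \ref{thm:main}). Set $F(z) := \log p(e^{z_1}, \ldots, e^{z_n}) - \sum_i z_i$; this function is convex (a classical consequence of real stability, via the log-concavity of $p$ along positive rays) and satisfies $\inf_z F(z) = \log \inf_{x > 0} p(x)/(x_1 \cdots x_n)$. Moreover, for every $z$ the quantity $e^{F(z)}$ is an upper bound on the capacity, so producing a sequence $z^{(t)}$ along which $F$ is monotone decreasing automatically yields a sequence of upper bounds on $\cpc(p)$.

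The algorithm I would use is the following Sinkhorn-style scheme. At iteration $t$, form the scaled polynomial $p_t(x) := p(e^{z_1^{(t)}} x_1, \ldots, e^{z_n^{(t)}} x_n)/\prod_i e^{z_i^{(t)}}$, which has $\cpc(p_t) = \cpc(p)$ and $p_t(\bm{1}) = e^{F(z^{(t)})}$. Compute the normalized marginals $\mu_i^{(t)} := \partial_i p_t(\bm{1})/p_t(\bm{1})$, which satisfy $\sum_i \mu_i^{(t)} = n$ by Euler's identity, and observe that $\nabla F(z^{(t)}) = \mu^{(t)} - \bm{1}$. Update $z^{(t+1)} := z^{(t)} - \eta(\mu^{(t)} - \bm{1})$ for a suitable step size, or alternatively do exact one-coordinate minimization, which is a closed-form root-finding problem since $F$ restricted to any coordinate axis reduces to a univariate convex function of a single real stable polynomial. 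Either variant produces $F(z^{(t+1)}) \leq F(z^{(t)})$, giving the required monotone convergence from above.

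For the $O(1/t)$ rate, the key analytic input is Theorem \ref{thm:main} applied at each iterate: whenever $\|\bm{1} - \mu^{(t)}\|_1 < 2$, dividing $p_t$ by $p_t(\bm{1})$ and invoking the main capacity bound gives
\[
    F(z^{(t)}) - \log \cpc(p) \leq -n \log\!\left(1 - \tfrac{1}{2}\|\bm{1} - \mu^{(t)}\|_1\right).
\]
Thus the suboptimality gap is directly controlled by the size of $\nabla F(z^{(t)})$. Coupled with a per-step progress inequality $F(z^{(t)}) - F(z^{(t+1)}) \geq c\|\mu^{(t)} - \bm{1}\|^2$, standard telescoping in the style of gradient-descent analysis for convex smooth functions yields $F(z^{(t)}) - \log \cpc(p) = O(1/t)$.

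The main obstacle is establishing that per-step progress inequality with a constant $c$ that is uniform (or at least controllable) in $t$: the Hessian of $F$ is not globally bounded for a general real stable $p$, since $p$ may vanish on the positive boundary. My plan to handle this is to invoke productization (Theorem \ref{thm:main-productization}): at each iterate, replace $p_t$ locally by a product of linear forms $f_t$ with the same value and gradient at $\bm{1}$, bound the one-step progress for $f_t$ using the classical matrix-Sinkhorn estimate (which is clean because the Hessian of $\log f_t(e^z)$ has a well-understood structure inherited from the underlying non-negative matrix), and transfer the bound back to $p_t$ via the sandwich $p_t(y) = f_t(y)$ that productization guarantees at the test points relevant to the line search. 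Once this reduction is carried out, the remainder of the argument is routine convex analysis.
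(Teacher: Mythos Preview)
Your overall strategy matches the paper's: control the suboptimality gap $\epsilon_t = F(z^{(t)}) - \log\cpc_{\bm 1}(p)$ by Theorem \ref{thm:main} in terms of $\|\bm 1 - \mu^{(t)}\|$, establish a per-step decrease that is quadratic in that same quantity, and combine to get $\epsilon_{t+1} \le \epsilon_t - c\,\epsilon_t^2$, hence $\epsilon_t = O(1/t)$. The paper, however, does not use productization for the per-step decrease, and your plan to use it there has a gap. Theorem \ref{thm:main-productization} produces, for each fixed $y$, a product polynomial $f_t$ that agrees with $p_t$ at $y$ and has matching value and gradient at $\bm 1$; it does \emph{not} furnish a single $f_t$ that bounds $p_t$ along the whole step segment, so ``transfer via the sandwich $p_t(y)=f_t(y)$'' does not by itself control $p_t(x^{(t+1)})$ in terms of matrix-Sinkhorn quantities. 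If you apply productization at $y=x^{(t+1)}$ you obtain $p_t(x^{(t+1)}) = \prod_i (A x^{(t+1)})_i$ for some $A \in \Mat_n(\mu^{(t)})$ you cannot choose, so you are forced to bound this uniformly over all such $A$---and that uniform bound is exactly the tangent-plane inequality $p_t(y) \le \bigl(\tfrac{1}{n}\sum_i \mu_i^{(t)} y_i\bigr)^n$ coming from log-concavity (Corollary \ref{cor:prod-lc-max}). In other words, once made rigorous your productization route collapses to the elementary log-concavity bound.

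That elementary bound is precisely what the paper uses, together with a specific multiplicative update $x_i^{(t+1)} := (c^{(t)}/\gamma_i^{(t)})\,x_i^{(t)}$ with $c^{(t)} := (\prod_i \gamma_i^{(t)})^{1/n}$, chosen so that the tangent-plane inequality at the new point collapses to $p(x^{(t+1)})/p(x^{(t)}) \le \prod_i \gamma_i^{(t)}$. An elementary calculus lemma (using only $\sum_i \gamma_i^{(t)} = n$) then gives $\log\prod_i \gamma_i^{(t)} \le -\tfrac{1}{6}\|\bm 1 - \gamma^{(t)}\|_2^2$, yielding the required quadratic decrement with no Hessian control and no productization. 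Your generic gradient step $z^{(t+1)} = z^{(t)} - \eta(\mu^{(t)}-\bm 1)$ would instead need smoothness of $F$, which, as you correctly note, fails globally; the paper's choice of update sidesteps this issue entirely.
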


\noindent
Note that the iterative algorithm we define is different, and easier to implement, than the iterative algorithm given by Bregman projections.
See Remark \ref{rem:bregman} for more details.

As discussed above, our main non-homogeneous bound improves the one used recently in \cite{karlin2020slightly} to give an improved approximation factor for metric TSP (see Corollary \ref{cor:imp_metric_TSP}).
We state this bound now, leaving further discussion of the relevant notation to Section \ref{sec:metric_TSP}.
Note that this bound is singly exponential in the number of random variables, in contrast to the doubly exponential bound of \cite{karlin2020slightly}.

\begin{theorem}[= Corollary \ref{cor:imp_metric_TSP} and Corollary \ref{cor:imp_metric_TSP_allones}, Application to metric TSP] \label{thm:main-TSP}
    Let $\mu$ be a strongly Rayleigh distribution on $\{0,1\}^m$, and let $X$ be a random variable distributed according to $\mu$.
    Let $S_1 \sqcup \ldots \sqcup S_n = [m]$ be a partition of $[m]$, and define random variables $A_1, \ldots, A_n$ via
    \[
        A_i := \sum_{s \in S_i} X_s \qquad \forall i \in [n].
    \]
    If $\left\|\mathbb{E}[A]-\kappa\right\|_1 \leq 1 - \epsilon$, then
    \[
        \mathbb{P}[\forall i : A_i = \kappa_i] \geq \epsilon^n \prod_{\kappa_i > 0} \frac{1}{e\sqrt{\kappa_i}}.
    \]
    In particular if $\kappa = \mathbf{1}$ and $\left\|\mathbb{E}[A]-\mathbf{1}\right\|_1 \leq 1 - \epsilon$, then
    \[
        \mathbb{P}[\forall i : A_i = 1] \geq \left(\frac{\epsilon}{e}\right)^n.
    \]
\end{theorem}


\section{Roadmap}

The remainder of the paper proceeds in a somewhat non-linear fashion, but each section (or subsection) gives explication of a single concept or result.
A description of each such section is given as follows.
\begin{itemize}
    \item Section \ref{sec:preliminaries}: Basic notation for polynomials and matrices, and some standard results regarding capacity and polynomials.
    \item Section \ref{sec:cap-bound-prod}: Proof of the main homogeneous capacity bound in the case of products of linear forms.
    \item Section \ref{sec:imp-bound-prod}: Proof of the main improved non-homogeneous capacity bound in the case of products of linear forms.
    \item Section \ref{sec:prod-hstab}: Proof of the productization result for stable polynomials.
    \item Section \ref{sec:metric_TSP}: Explication of the connection between our bound and the recent work on metric TSP.
    \item Section \ref{sec:algos}: Discussion of the algorithms for stable polynomial scaling, and for computing the minimum capacity value for a given gradient value.
    \item Section \ref{sec:other-bounds}: Various other bounds and discussion for more general classes of polynomials.
\end{itemize}

\section{Preliminaries} \label{sec:preliminaries}

\subsection{Notation} \label{sec:notation}

We let $\R,\R_+,\Z,\Z_+,\N,\C$ denote the reals, non-negative reals, integers, non-negative integers, positive integers, and complex numbers respectively.
We further let $\mathbb{K}^d[x] = \mathbb{K}^d[x_1,\ldots,x_n]$ denote the set of homogeneous polynomials of degree $d$ in $n$ variables with coefficients in $\mathbb{K}$.
For a polynomial $p$ in $n$ variables, the \emph{support} of $p$, denoted $\supp(p)$, is the set of all $\mu \in \Z_+^n$ such that $x^\mu$ has non-zero coefficient in $p$.
Further, the \emph{Newton polytope} of $p$, denoted $\Newt(p)$, is the convex hull of $\supp(p)$.
We also denote $\|\alpha\|_1 := \sum_{i=1}^n |\alpha_i|$ for $\alpha \in \R_+^n$ as usual.
We now define all of the various classes of matrices and polynomials we will consider.

\begin{definition}
    Given an $\alpha \in \R_+^n$ with $\|\alpha\|_1 = n$, we define $\Mat_n(\alpha)$ to be the set of $n \times n$ matrices $A$ with non-negative entries such that the row sums of $A$ are all 1 and the column sums of $A$ are given by $\alpha$.
\end{definition}

\begin{definition}
    Given $p \in \R_+^d[x_1,\ldots,x_n]$, we say that $p$ is \emph{real stable} if $p(x) = p(x_1,\ldots,x_n) \neq 0$ whenever $x_1,\ldots,x_n$ are all in the complex upper half-plane.
\end{definition}

\begin{definition}
    Given $p \in \R_+^d[x_1,\ldots,x_n]$, we say that $p$ is \emph{strongly log-concave} if $\nabla_{v_1} \cdots \nabla_{v_k} p$ is either identically zero or log-concave in the positive orthant for all $k \geq 0$ and all choices of $v_1, \ldots, v_k \in \R_+^n$. (These polynomials also go by the names \emph{completely log-concave} and \emph{Lorentzian}; see \cite{anari2018log} and \cite{branden2019lorentzian}.)
\end{definition}

\begin{definition}
    For $n \in \N$ and $\alpha \in \R_+^n$, we define the following classes of polynomials, ordered by inclusion:
    \begin{enumerate}
        \item $\Prod_n(\alpha)$ is the set of all polynomials of the form $p(x) = \prod_{i=1}^n (Ax)_i$, where $A \in \Mat_n(\alpha)$. Note that $p(\mathbf{1}) = 1$ and $\nabla p(\mathbf{1}) = \alpha$ for all such polynomials.
        \item $\HStab_n(\alpha)$ is the set of all real stable polynomials in $\R_+^n[x_1,\ldots,x_n]$ for which $p(\mathbf{1}) = 1$ and $\nabla p(\mathbf{1}) = \alpha$.
        \item $\SLC_n(\alpha)$ is the set of all strongly log-concave polynomials in $\R_+^n[x_1,\ldots,x_n]$ for which $p(\mathbf{1}) = 1$ and $\nabla p(\mathbf{1}) = \alpha$.
        \item $\LC_n(\alpha)$ is the set of all polynomials in $\R_+^n[x_1,\ldots,x_n]$ which are log-concave in the open positive orthant and for which $p(\mathbf{1}) = 1$ and $\nabla p(\mathbf{1}) = \alpha$.
    \end{enumerate}
\end{definition}

\noindent
We also give a special name to such matrices and polynomials whenever $\alpha = \mathbf{1}$.

\begin{definition}
    We refer to matrices in $\Mat_n(\mathbf{1})$ as \emph{doubly stochastic}. Similarly, if $p \in \R_+^n[x_1,\ldots,x_n]$ such that $p(\mathbf{1}) = 1$ and $\nabla p(\mathbf{1}) = \mathbf{1}$, then we say $p$ is \emph{doubly stochastic}.
\end{definition}

\noindent
Finally, we define the key quantity we study in this note.

\begin{definition}
    Given a polynomial $p \in \R_+^n[x_1,\ldots,x_n]$, we define the \emph{capacity} of $p$ as
    \[
        \cpc_\mathbf{1}(p) := \inf_{x > 0} \frac{p(x)}{x^\mathbf{1}} = \inf_{x_1,\ldots,x_n > 0} \frac{p(x)}{x_1 \cdots x_n}.
    \]
\end{definition}

\subsection{Basic Results} \label{sec:basic-results}

We state here a few standard basic results concerning polynomials, matrices, and capacity.

\begin{proposition}
    Polynomials of the form $\prod_{i=1}^n (Ax)_i$ for a given matrix $A$ with non-negative coefficients are homogeneous real stable, and homogeneous real stable polynomials are strongly log-concave.
\end{proposition}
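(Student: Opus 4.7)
The plan is to prove each of the two claims separately; both are essentially classical results from the theory of stable polynomials, so the task is mainly to assemble the right ingredients.

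For the first claim, fix a matrix $A$ with non-negative entries and consider $p(x) = \prod_{i=1}^n (Ax)_i$. Each linear factor $\ell_i(x) = \sum_j a_{ij} x_j$ has non-negative coefficients, so for $x \in \UHP^n$ one computes $\Im \ell_i(x) = \sum_j a_{ij} \Im x_j > 0$ whenever some $a_{ij}$ is positive (and otherwise $\ell_i$ is identically zero). Hence every non-zero $\ell_i$ is real stable, and since a product of non-zero complex numbers is non-zero, $p$ is real stable. Homogeneity is immediate from the definition of $p$.

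For the second claim, let $p$ be homogeneous real stable of degree $d$ in $\R_+^n[x]$. First I would invoke the classical fact that real stability is preserved by directional derivatives $\nabla_v$ with $v \in \R_+^n$: the polynomial $q(x,t) := p(x + tv)$ is real stable in $(x,t)$ because for $x_j, t \in \UHP$ and $v_j \geq 0$ one has $x_j + t v_j \in \UHP$; one then recovers $\nabla_v p(x) = \partial_t q(x,t) |_{t=0}$, and both differentiation in $t$ and evaluation at a real $t$ are known to preserve real stability (the latter via Hurwitz's theorem, modulo the possibility of being identically zero). Iterating, every polynomial of the form $\nabla_{v_1} \cdots \nabla_{v_k} p$ with $v_i \in \R_+^n$ is either identically zero or a homogeneous real stable polynomial with non-negative coefficients.

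It therefore suffices to show that every non-zero homogeneous real stable polynomial $q \in \R_+^n[x]$ is log-concave on the open positive orthant. For $\deg q \in \{0,1\}$ this is immediate, since a positive constant or a positive linear form has concave logarithm there, so assume $\deg q \geq 2$. Here I would appeal to Gårding's theory of hyperbolic polynomials: real stability together with non-negativity of coefficients implies that $q$ is hyperbolic with respect to $\mathbf{1}$ and that the open positive orthant is contained in the hyperbolicity cone of $q$, so Gårding's concavity inequality gives that $q^{1/\deg q}$ is concave on that orthant, whence $\log q$ is concave there. The main technical obstacle is that both key ingredients—preservation of real stability under directional derivatives in $\R_+^n$, and Gårding's concavity on the hyperbolicity cone—are nontrivial, but both are by now standard in the stable/hyperbolic polynomial literature and I would cite them rather than reprove them in detail.
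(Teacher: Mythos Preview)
Your argument is correct and is essentially the standard one for both claims. Note, however, that the paper does not actually supply a proof of this proposition: it appears in the ``Basic Results'' subsection and is stated without proof as a known fact, with the surrounding propositions carrying citations to \cite{gurvits2009multivariate}, \cite{branden2019lorentzian}, and \cite{gurvits2008van}. So there is no paper proof to compare against; you have simply filled in the details the authors chose to omit. The ingredients you invoke---closure of real stability under directional derivatives along $\R_+^n$ vectors (via Hurwitz), and G{\aa}rding's concavity of $q^{1/\deg q}$ on the hyperbolicity cone---are exactly the standard ones underlying these facts.
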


\begin{proposition}[see \cite{gurvits2009multivariate}] \label{prop:supp-Newt}
    For any strongly log-concave $p \in \R_+^d[x_1,\ldots,x_n]$ and any $\mu \in \Z_+^n$, we have that $\mu \in \supp(p)$ if and only if $\mu \in \Newt(p)$.
\end{proposition}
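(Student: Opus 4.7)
The forward direction $\supp(p) \subseteq \Newt(p)$ is immediate from the definition of $\Newt(p)$ as the convex hull of $\supp(p)$. My plan for the nontrivial reverse direction is to establish the stronger statement that $\supp(p)$ is \emph{M-convex}: for any $\alpha, \beta \in \supp(p)$ and any index $i$ with $\alpha_i > \beta_i$, there exists $j$ with $\alpha_j < \beta_j$ such that $\alpha - e_i + e_j \in \supp(p)$. A standard theorem of Murota in discrete convex analysis then says that every M-convex set equals the lattice points of its convex hull, yielding $\supp(p) = \Newt(p) \cap \Z_+^n$.

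To prove M-convexity, I would reduce to the quadratic case. Given $\alpha, \beta \in \supp(p)$ and $i$ with $\alpha_i > \beta_i$, first apply the operator $\prod_k \partial_k^{\min(\alpha_k, \beta_k)}$ to $p$: the result $\tilde p$ is still SLC (the definition is closed under directional derivatives along $\R_+^n$), and the images $\tilde\alpha, \tilde\beta$ of $\alpha, \beta$ lie in $\supp(\tilde p)$ with disjoint supports, with $i \in \supp(\tilde\alpha)$. Next, setting all variables outside $\supp(\tilde\alpha) \cup \supp(\tilde\beta)$ to positive constants preserves SLC (as this is just restriction along positive directions) and yields a smaller polynomial $\hat p$ still containing $\tilde\alpha, \tilde\beta$ in its support. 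Finally, via further carefully chosen partial derivatives — together with a polarization trick to handle the case $\tilde\alpha_i = 1$, replacing $x_i^2$ by $x_i y_i$ and keeping track of both halves — one reaches a homogeneous degree-2 polynomial $q(x) = x^\top M x$ with $M \geq 0$ entrywise, and with $M_{ii} > 0$ and $M_{jj} > 0$ for some $j \in \supp(\tilde\beta)$.

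The analytic heart of the argument is the following: strong log-concavity of $q$ on the positive orthant forces the non-negative symmetric matrix $M$ to have at most one positive eigenvalue, and by Cauchy interlacing the same holds for the $2 \times 2$ principal submatrix on rows and columns $i, j$. Since this submatrix has positive trace (both diagonal entries are positive), its determinant must be non-positive, i.e., $M_{ij}^2 \geq M_{ii} M_{jj} > 0$, forcing $M_{ij} > 0$. Tracing this cross-term back through the specializations and derivatives produces a monomial $x^{\alpha - e_i + e_j}$ with positive coefficient in $p$, establishing the desired exchange. The main obstacle I anticipate is the bookkeeping in the reduction — choosing the derivatives and specializations so that $i$ and a valid witness $j$ both appear as squared variables in the final quadratic, and handling the low-multiplicity cases cleanly — but once this is arranged, the Perron-Frobenius-type eigenvalue inequality closes the argument, and this is precisely where the SLC hypothesis is essentially used.
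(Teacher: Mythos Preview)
The paper does not prove this proposition; it is stated in the ``Basic Results'' section with a bare citation to \cite{gurvits2009multivariate}, and the very next proposition (symmetric exchange, Proposition~\ref{prop:symm-exch}) is likewise quoted without proof from \cite{branden2019lorentzian}. So there is no in-paper argument to compare against.

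Your outline is correct and is essentially the Br\"and\'en--Huh route: prove that $\supp(p)$ satisfies the exchange axiom (i.e.\ is M-convex), then invoke Murota's theorem that an M-convex set coincides with the lattice points of its convex hull. Note that you are in effect re-deriving Proposition~\ref{prop:symm-exch} as a lemma on the way to Proposition~\ref{prop:supp-Newt}, whereas the paper simply imports both as black boxes. One small technical point: in this paper SLC is only defined for \emph{homogeneous} polynomials in $\R_+^d[x_1,\ldots,x_n]$, so specializing the extra variables to positive constants takes you outside that class. The fix is easy---set them to $0$ instead (the monomials $x^{\tilde\alpha}$ and $x^{\tilde\beta}$ survive since they do not involve those variables, and SLC is preserved under limits)---but be explicit about it. The ``bookkeeping'' you flag is real; the cleanest published reduction to the quadratic signature inequality is in \cite{branden2019lorentzian}, Section~2--3, and following that would spare you the ad hoc polarization step.
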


\begin{proposition}[Symmetric exchange; see \cite{branden2019lorentzian}, Section 3.3] \label{prop:symm-exch}
    Let $p \in \R_+^d[x_1,\ldots,x_n]$ be strongly log-concave, and let $\mu,\nu \in \supp(p)$ such that $\mu_i > \nu_i$ for some $i \in [n]$. Then there exists $j \in [n]$ such that $\mu_j < \nu_j$ and $(\mu-\delta_i+\delta_j), (\nu+\delta_i-\delta_j) \in \supp(p)$.
\end{proposition}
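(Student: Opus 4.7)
The plan is to prove this by induction on the degree $d$, using the two facts baked into the definitions: first, any non-zero partial derivative of a strongly log-concave polynomial is SLC of one lower degree, and second, Proposition \ref{prop:supp-Newt} gives $\supp(p) = \Newt(p) \cap \Z^n$, which lets us transfer information freely between the combinatorial support and the convex Newton polytope. The base case $d=1$ is immediate, and the inductive step will split according to whether $\mu$ and $\nu$ share any positive coordinate.

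For the easy case, suppose some index $k$ satisfies $\mu_k \geq 1$ and $\nu_k \geq 1$. Set $q := \partial_k p$, which is SLC and non-zero, with $\mu - e_k$ and $\nu - e_k$ both in $\supp(q)$. A direct check (splitting on whether $k = i$) shows $(\mu - e_k)_i > (\nu - e_k)_i$, so the inductive hypothesis applied to $q$ produces an index $j$ with $(\mu - e_k)_j < (\nu - e_k)_j$ and $(\mu - e_k) - e_i + e_j,\; (\nu - e_k) + e_i - e_j \in \supp(q)$. Adding $e_k$ back to both points, we obtain $\mu - e_i + e_j,\; \nu + e_i - e_j \in \supp(p)$, and the same case split yields $\mu_j < \nu_j$. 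So in the presence of a shared coordinate the induction closes mechanically.

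The real work lies in the disjoint-support case $\mu \wedge \nu = 0$, where no single variable can be differentiated out while keeping both $\mu$ and $\nu$ in the support of the derivative. My plan here is to reduce all the way down to a degree-$2$ SLC polynomial by differentiating in auxiliary coordinates (those irrelevant to the exchange we seek), ending up with a quadratic whose Hessian $M$ is symmetric, entrywise non-negative, and has at most one positive eigenvalue. In degree $2$ the desired exchange becomes a statement about entries of $M$, and one argues that if both candidate entries vanished then a suitable $3 \times 3$ principal submatrix of $M$ would be forced to have at least two positive eigenvalues, contradicting the Lorentzian spectral condition. Proposition \ref{prop:supp-Newt} is then used in the opposite direction to lift the resulting positivity statements through the chain of derivatives back into the original $\supp(p)$. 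The main obstacle I expect is exactly this last case: setting up the degree-$2$ reduction so that the exchange coordinates $i$ and $j$ genuinely survive, and executing the spectral analysis of $M$ without a messy case split. The rest of the argument is essentially bookkeeping driven by the support $=$ Newton polytope identification.
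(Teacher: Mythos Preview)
The paper does not give its own proof of this proposition; it is listed among the preliminaries with a citation to \cite{branden2019lorentzian}, so there is no in-paper argument to compare against beyond that reference.

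On your plan: the induction and the shared-coordinate case are correct and clean. The genuine gap is in the disjoint-support case $\mu \wedge \nu = 0$. Your phrase ``differentiating in auxiliary coordinates (those irrelevant to the exchange we seek)'' presupposes you already know which $j$ works, but $j$ is precisely what the proposition asks you to produce. More seriously, in the disjoint case no chain of derivatives can reach degree $2$ while keeping both $\mu$ and $\nu$ in the support: any $\partial_k$ with $\mu_k \geq 1$ has $\nu_k = 0$ and kills $\nu$, any $\partial_k$ with $\nu_k \geq 1$ kills $\mu$, and any $\partial_k$ with $\mu_k = \nu_k = 0$ kills both. So whatever quadratic you land on retains information about at most one of $\mu,\nu$, and the $3 \times 3$ spectral contradiction you sketch has nothing to play against the surviving monomial. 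The argument in \cite{branden2019lorentzian} does not try to keep both alive: one route is to prove only the one-sided exchange $\mu - \delta_i + \delta_j \in \supp(p)$ via a Hessian/connectivity argument, and then invoke the discrete-convex-analysis fact (Murota) that on constant-sum integer sets the one-sided exchange axiom already forces the symmetric one. Finally, your appeal to Proposition~\ref{prop:supp-Newt} to ``lift'' positivity back through the derivatives is unnecessary --- a positive coefficient of $x^\beta$ in $\partial^\alpha p$ gives a positive coefficient of $x^{\alpha+\beta}$ in $p$ directly --- and you should be wary of leaning on that proposition here, since in \cite{branden2019lorentzian} it is itself derived from M-convexity of the support.
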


\begin{proposition}[\cite{gurvits2008van}] \label{prop:ds-cap}
    If $p$ is doubly stochastic then $\cpc_\mathbf{1}(p) = 1$. In particular, if $A$ is doubly stochastic then $\cpc_\mathbf{1}(\sum_{i=1}^n (Ax)_i) = 1$. More generally, if $p(\mathbf{1}) = 1$ and $\nabla p(\mathbf{1}) = \alpha$, then $\cpc_\alpha(p) = 1$.
\end{proposition}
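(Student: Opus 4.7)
The plan is to reduce all three claims to a single log-convexity observation. For any $p \in \R_+^n[x_1,\ldots,x_n]$ with non-negative coefficients, the function
\[
    g(t) := \log p(e^{t_1}, \ldots, e^{t_n})
\]
is convex on $\R^n$. This is because $p(e^t) = \sum_\mu c_\mu e^{\langle \mu, t\rangle}$ is a non-negative combination of log-affine (hence log-convex) functions, and the pointwise sum of log-convex functions is log-convex. No stability or log-concavity assumption is needed for this step; only non-negativity of the coefficients.

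I would then prove the most general statement (the third sentence) first, since it contains the other two. Suppose $p(\mathbf{1}) = 1$ and $\nabla p(\mathbf{1}) = \alpha$. Evaluate: $g(0) = \log p(\mathbf{1}) = 0$ and $\nabla g(0) = \nabla p(\mathbf{1})/p(\mathbf{1}) = \alpha$. The tangent-plane inequality for a convex function gives $g(t) \geq g(0) + \langle \nabla g(0), t\rangle = \langle \alpha, t\rangle$ for every $t \in \R^n$, which on exponentiating reads
\[
    p(e^t) \geq \prod_{i=1}^n (e^{t_i})^{\alpha_i} = (e^t)^\alpha.
\]
Substituting $x = e^t$, which parametrises the open positive orthant, yields $p(x)/x^\alpha \geq 1$ for all $x > 0$, so $\cpc_\alpha(p) \geq 1$. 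The reverse inequality is witnessed by $x = \mathbf{1}$, giving $\cpc_\alpha(p) = 1$. Setting $\alpha = \mathbf{1}$ recovers the first sentence of the proposition.

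For the second sentence, it suffices to observe that if $A \in \Mat_n(\mathbf{1})$ is doubly stochastic, then $p(x) := \prod_{i=1}^n (Ax)_i$ is a doubly stochastic polynomial: $p(\mathbf{1}) = \prod_i (A\mathbf{1})_i = 1$ by the row-sum condition, and $\partial_{x_j} p(\mathbf{1}) = \sum_i A_{ij} = 1$ by the column-sum condition. The second sentence is then a direct instance of the first.

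There is no real obstacle; the only thing worth flagging is that the argument only uses non-negativity of the coefficients, so the conclusion holds uniformly across every class $\Prod_n(\alpha) \subseteq \HStab_n(\alpha) \subseteq \SLC_n(\alpha) \subseteq \LC_n(\alpha)$ introduced above. In particular, this explains why no structural hypothesis (stability, log-concavity, etc.) appears in the statement.
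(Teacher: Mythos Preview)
Your argument is correct and is the standard proof of this fact. Note, however, that the paper does not actually prove this proposition; it is listed among the ``Basic Results'' with a citation to \cite{gurvits2008van} and no proof is given, so there is no in-paper argument to compare against. The log-convexity route you take---the tangent inequality for the convex function $t \mapsto \log p(e^t)$ at $t=0$---is exactly the usual derivation.

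One incidental remark: the printed statement has a typo, writing $\sum_{i=1}^n (Ax)_i$ where $\prod_{i=1}^n (Ax)_i$ is clearly intended (cf.\ Proposition~\ref{prop:cap-Newt} and Corollary~\ref{cor:cap-per}); you correctly worked with the product.
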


\begin{proposition} \label{prop:cap-Newt}
    Given a polynomial $p \in \R_+^n[x_1,\ldots,x_n]$, we have that $\cpc_\mathbf{1}(p) > 0$ if and only if $\mathbf{1} \in \Newt(p)$.
\end{proposition}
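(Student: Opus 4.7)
The plan is to prove the two directions separately: use a hyperplane separation argument for the ``only if'' direction and a weighted AM--GM argument for the ``if'' direction. Both arguments are standard and reduce the capacity question to a statement about the Newton polytope via a change of coordinates of the form $x_i = e^{tv_i}$.

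For the forward (``only if'') direction, I would argue by contrapositive: suppose $\mathbf{1} \notin \Newt(p)$. Since $\Newt(p)$ is a closed convex set, there is a hyperplane separating $\mathbf{1}$ from $\Newt(p)$, i.e.\ a vector $v \in \R^n$ such that $\langle v, \mu\rangle > \langle v, \mathbf{1}\rangle$ for every $\mu \in \supp(p)$. Plug in $x_i = e^{t v_i}$ for $t \in \R$; this gives
\[
    \frac{p(x)}{x^{\mathbf{1}}} \;=\; \sum_{\mu \in \supp(p)} c_\mu \, e^{t\,(\langle v,\mu\rangle - \langle v,\mathbf{1}\rangle)}.
\]
Every exponent in the sum is a positive multiple of $t$, so letting $t \to -\infty$ drives the expression to $0$, proving $\cpc_\mathbf{1}(p) = 0$.

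For the backward (``if'') direction, suppose $\mathbf{1} \in \Newt(p)$. Then I can write $\mathbf{1} = \sum_\mu \lambda_\mu\, \mu$ as a convex combination with $\lambda_\mu > 0$ and $\mu \in \supp(p)$ (so $c_\mu > 0$). Since $p$ has non-negative coefficients, $p(x) \geq \sum_\mu c_\mu x^\mu$, and applying weighted AM--GM to the numbers $c_\mu x^\mu / \lambda_\mu$ with weights $\lambda_\mu$ gives
\[
    p(x) \;\geq\; \sum_\mu \lambda_\mu \cdot \frac{c_\mu x^\mu}{\lambda_\mu}
    \;\geq\; \prod_\mu \left(\frac{c_\mu}{\lambda_\mu}\right)^{\lambda_\mu} x^{\sum_\mu \lambda_\mu \mu}
    \;=\; C \cdot x^{\mathbf{1}},
\]
where $C := \prod_\mu (c_\mu/\lambda_\mu)^{\lambda_\mu} > 0$. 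Dividing by $x^\mathbf{1}$ and taking the infimum yields $\cpc_\mathbf{1}(p) \geq C > 0$.

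Neither direction is really an obstacle; the main thing to get right is the choice of substitution $x_i = e^{tv_i}$ in the separation argument (ensuring the sign of $t$ makes the ratio vanish) and the bookkeeping in the weighted AM--GM step (ensuring the $\lambda_\mu$ are supported on $\supp(p)$ so that the corresponding $c_\mu$ are strictly positive, which is always possible since $\Newt(p)$ is the convex hull of $\supp(p)$). Everything else follows by standard manipulations.
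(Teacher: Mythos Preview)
Your proof is correct in both directions. The paper itself does not supply a proof of this proposition: it is listed among the ``Basic Results'' in Section~\ref{sec:basic-results} and treated as standard background, so there is nothing to compare against. Your argument is exactly the standard one (separating hyperplane for one direction, weighted AM--GM for the other), and it would serve perfectly well as the omitted proof.
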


\begin{corollary} \label{cor:cap-per}
    Given an $n \times n$ matrix $A$ with non-negative entries, the following are equivalent:
    \begin{enumerate}
        \item $\per(A) = 0$.
        \item $\cpc_\mathbf{1}(\prod_{i=1}^n (Ax)_i) = 0$.
        \item Up to permutation, the bottom-left $i \times j$ block of $A$ is 0 for some $i+j > n$.
    \end{enumerate}
\end{corollary}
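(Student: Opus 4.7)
The plan is to show the equivalences $(1) \Leftrightarrow (2)$ using the polynomial machinery of Section \ref{sec:basic-results}, and $(1) \Leftrightarrow (3)$ via Hall's marriage theorem (equivalently, the König–Egerváry theorem) applied to the bipartite support graph of $A$.

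For $(1) \Leftrightarrow (2)$, the key is that expanding $\prod_{i=1}^n (Ax)_i = \prod_{i=1}^n \sum_{j=1}^n A_{ij} x_j$ gives the coefficient of $x_1 x_2 \cdots x_n$ as exactly $\sum_{\sigma \in S_n} \prod_i A_{i,\sigma(i)} = \per(A)$. Therefore $\per(A) > 0$ iff $\mathbf{1} \in \supp(\prod_i (Ax)_i)$. Since products of linear forms with non-negative coefficients are homogeneous real stable, and hence strongly log-concave (both statements from the first proposition of Section \ref{sec:basic-results}), Proposition \ref{prop:supp-Newt} tells us $\mathbf{1} \in \supp(p)$ iff $\mathbf{1} \in \Newt(p)$. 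Combining with Proposition \ref{prop:cap-Newt}, which says $\cpc_\mathbf{1}(p) > 0$ iff $\mathbf{1} \in \Newt(p)$, we obtain the equivalence $\per(A) > 0 \iff \cpc_\mathbf{1}(\prod_i(Ax)_i) > 0$.

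For $(1) \Leftrightarrow (3)$, consider the bipartite graph $G$ on vertex classes $[n]_{\text{row}} \sqcup [n]_{\text{col}}$ with an edge $(i,j)$ iff $A_{ij} > 0$. Then $\per(A) = \sum_\sigma \prod_i A_{i,\sigma(i)} > 0$ iff some permutation $\sigma$ satisfies $A_{i,\sigma(i)} > 0$ for all $i$, i.e., iff $G$ admits a perfect matching. By Hall's theorem, $G$ has no perfect matching iff there exists $S \subseteq [n]_{\text{row}}$ with $|N(S)| < |S|$. Setting $T := [n] \setminus N(S)$, the submatrix indexed by $(S, T)$ is identically zero, with $|S| + |T| = |S| + (n - |N(S)|) > n$. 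Permuting the rows to move $S$ to the bottom and the columns to move $T$ to the left produces the claimed $i \times j$ zero block with $i + j > n$. Conversely, any such zero block certifies a Hall violator, blocking every perfect matching.

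There is no serious obstacle: both directions reduce to standard facts, and the main content is simply packaging the permanent-as-coefficient identity together with the Newton polytope/support equality for strongly log-concave polynomials. The only care required is to state Hall's theorem in the correct direction (rows versus columns) so that the zero block lands in the bottom-left as specified, which is a matter of choosing the permutations appropriately.
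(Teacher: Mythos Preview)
Your proof is correct and follows essentially the same approach as the paper: the equivalence $(1) \Leftrightarrow (2)$ via the permanent-as-coefficient identity combined with Propositions \ref{prop:supp-Newt} and \ref{prop:cap-Newt}, and $(1) \Leftrightarrow (3)$ via Hall's marriage theorem. The paper is terser on the Hall direction (it simply cites Hall), while you spell out the bipartite support graph and the zero-block construction, but the underlying argument is identical.
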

\begin{proof}
    $(1) \iff (2)$. Let us denote $p(x) := \prod_{i=1}^n (Ax)_i$.
    Recall that
    \[
        \per(A) = \partial_{x_1} \cdots \partial_{x_n} p.
    \]
    From this expression, it is clear to see that $\per(A) > 0$ iff $\mathbf{1} \in \supp(p)$.
    Further, since $p$ is a real stable polynomial, we know that $\mathbf{1} \in \supp(p)$ iff $\mathbf{1} \in \Newt(p)$ by Proposition \ref{prop:supp-Newt}.
    Finally, $\cpc_\mathbf{1}(p) > 0$ iff $\mathbf{1} \in \Newt(p)$ by Proposition \ref{prop:cap-Newt}.
    
    $(1) \iff (3)$. Follows from Hall's marriage theorem.
\end{proof}

\begin{lemma} \label{lem:linear-cap}
    For any $c \in \R_+^n$, we have
    \[
        \cpc_{\mathbf{1}}((c \cdot x)^n) = n^n \prod_{i=1}^n c_i.
    \]
\end{lemma}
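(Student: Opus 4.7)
The plan is to obtain both sides of the equality by a direct application of the AM--GM inequality applied to the $n$ quantities $c_1 x_1, \ldots, c_n x_n$. For any $x \in \R_+^n$ with all $x_i > 0$,
\[
    c \cdot x = \sum_{i=1}^n c_i x_i \geq n \left(\prod_{i=1}^n c_i x_i\right)^{1/n} = n \left(\prod_{i=1}^n c_i\right)^{1/n} (x_1 \cdots x_n)^{1/n}.
\]
Raising this to the $n$-th power and dividing by $x_1 \cdots x_n$ shows that $\frac{(c \cdot x)^n}{x_1 \cdots x_n} \geq n^n \prod_{i=1}^n c_i$ uniformly in $x$, which yields the lower bound $\cpc_{\mathbf{1}}((c \cdot x)^n) \geq n^n \prod_{i=1}^n c_i$.

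For the matching upper bound, I would split on whether $c$ has any zero entries. When all $c_i > 0$, AM--GM is tight precisely when the $c_i x_i$ are equal, so I would plug in $x_i = 1/c_i$: then $c \cdot x = n$, $(c \cdot x)^n = n^n$, $x_1 \cdots x_n = \prod_{i=1}^n c_i^{-1}$, and the ratio equals $n^n \prod_{i=1}^n c_i$ exactly. This exhibits the infimum and proves equality. When some $c_i = 0$, the target value is $0$, and I would argue that fixing the other coordinates and sending $x_i \to \infty$ keeps the numerator bounded (it has no $x_i$ dependence since $c_i = 0$) while the denominator diverges, so the infimum is $0$, matching the claim.

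There is no serious obstacle here: the whole statement is a one-line AM--GM calculation together with its equality case. The only thing to be slightly careful about is the degenerate case $c_i = 0$, which cannot be handled by evaluation at $1/c_i$ and instead requires the limiting argument above.
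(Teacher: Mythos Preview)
Your argument is correct; the AM--GM inequality gives the lower bound, the equality case gives the matching upper bound when all $c_i > 0$, and the limiting argument handles the degenerate case. The paper itself states this lemma without proof among the basic results in Section~\ref{sec:basic-results}, so there is nothing to compare against --- your approach is the natural one and would be exactly what any reader would supply.
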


\section{Capacity Bound for Product Polynomials} \label{sec:cap-bound-prod}

In this section we prove the main result (Theorem \ref{thm:main}) for polynomials in $\Prod_n(\alpha)$.
To simplify notation, we define the following for $n \in \N$ and $\alpha \in \R_+^n$:
\[
    L_n(\alpha) \equiv L_n^{\Prod}(\alpha) := \min_{p \in \Prod_n(\alpha)} \cpc_\mathbf{1}(p) \qquad \text{and} \qquad L_n^{\SLC}(\alpha) := \min_{p \in \SLC_n(\alpha)} \cpc_\mathbf{1}(p).
\]
Before proving the result, we need Proposition \ref{prop:hall-cap-norm}, which has some resemblance to Corollary \ref{cor:cap-per}.
In particular note that both results give equivalent conditions for capacity bounds and Hall-like properties.
First though, we need a few lemmas.

\begin{definition}
    Given a polynomial $p \in \R_+^n[x_1,\ldots,x_n]$, we say that $p$ is a \emph{Hall polynomial} if for all $S \subseteq [n]$ we have $\deg_S(p) \geq |S|$ where $\deg_S(p)$ is the total degree of $p$ involving variables with index in $S$.
\end{definition}

\begin{lemma} \label{lem:Hall-SLC}
    For any $p \in \R_+^n[x_1,\ldots,x_n]$, if $\cpc_\mathbf{1}(p) > 0$ then $p$ is a Hall polynomial. If $p$ is strongly log-concave, then these conditions are equivalent.
\end{lemma}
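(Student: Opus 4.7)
The plan is to prove the two implications separately. The forward implication ($\cpc_\mathbf{1}(p) > 0 \Rightarrow$ Hall) does not require strong log-concavity and follows directly from the Newton polytope characterization of capacity, while the backward implication (Hall $\Rightarrow \cpc_\mathbf{1}(p) > 0$) will crucially use Propositions \ref{prop:supp-Newt} and \ref{prop:symm-exch}.

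For the forward direction, I would invoke Proposition \ref{prop:cap-Newt} to deduce $\mathbf{1} \in \Newt(p)$, and then write $\mathbf{1} = \sum_k \lambda_k \mu^{(k)}$ as a convex combination of support points $\mu^{(k)} \in \supp(p)$. Summing the coordinates indexed by $S$ on both sides yields $|S| = \sum_k \lambda_k \sum_{i \in S} \mu^{(k)}_i \leq \max_k \sum_{i \in S} \mu^{(k)}_i \leq \deg_S(p)$, which is exactly the Hall property.

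For the backward direction (assuming $p$ is SLC), I would first chain Propositions \ref{prop:cap-Newt} and \ref{prop:supp-Newt} to reduce the goal $\cpc_\mathbf{1}(p) > 0$ to the combinatorial statement $\mathbf{1} \in \supp(p)$. To produce $\mathbf{1}$ as a monomial of $p$, I would run a minimization/exchange argument: pick $\mu \in \supp(p)$ minimizing the potential $\Phi(\mu) = \sum_i \mu_i^2$, so that $\mu$ is as flat as possible subject to the degree constraint $\sum_i \mu_i = n$. If $\mu \neq \mathbf{1}$, the zero set $Z = \{i : \mu_i = 0\}$ is nonempty, and the Hall hypothesis applied to $Z$ supplies a $\nu \in \supp(p)$ with $\sum_{i \in Z} \nu_i \geq |Z| > 0$. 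Picking $j \in Z$ with $\nu_j > \mu_j = 0$ and invoking the symmetric exchange property (Proposition \ref{prop:symm-exch}) yields $\mu' = \mu - \delta_k + \delta_j \in \supp(p)$ for some $k$ with $\mu_k > \nu_k \geq 0$; a direct calculation gives $\Phi(\mu') - \Phi(\mu) = 2(1 - \mu_k) \leq 0$, strict whenever $\mu_k \geq 2$, which would contradict the minimality of $\mu$.

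The main obstacle is the borderline case where every valid exchange index $k$ produced by Proposition \ref{prop:symm-exch} satisfies $\mu_k = 1$; here the move simply swaps a $0$ coordinate with a $1$ coordinate and leaves $\Phi$ unchanged. I would resolve this either by (a) choosing $\nu$ more cleverly, applying Hall to the complement of the excess set $E = \{i : \mu_i \geq 2\}$ so as to force the exchange to land in $E$, or (b) appealing to the deeper consequence of symmetric exchange that $\supp(p)$ is M-convex, so that $\Newt(p)$ is the base polytope of a polymatroid with rank function $f(S) = \deg_S(p)$, in which case $\mathbf{1} \in \Newt(p)$ is equivalent to $|S| \leq f(S)$ for every $S$ and the conclusion is immediate.
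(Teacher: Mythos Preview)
Your forward direction is correct and is essentially the paper's argument (the paper phrases it as the contrapositive, but the content is the same use of Proposition~\ref{prop:cap-Newt}).

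For the backward direction, your option~(b) is correct and is a genuinely different route from the paper. Proposition~\ref{prop:symm-exch} is precisely the statement that $\supp(p)$ is M-convex, and the standard structural consequence (developed in the same Br\"and\'en--Huh reference) is that $\Newt(p)$ is the base polytope $\{x:\sum_i x_i=n,\ \sum_{i\in S}x_i\le \deg_S(p)\text{ for all }S\}$; the Hall condition is then literally the membership test for $\mathbf{1}$, and Propositions~\ref{prop:supp-Newt} and~\ref{prop:cap-Newt} finish. The paper instead runs an induction on $n$: it shows, via a three-case symmetric-exchange analysis, that $q:=\partial_{x_n}p\big|_{x_n=0}$ is again a Hall polynomial in $n-1$ variables, applies the inductive hypothesis to get $\cpc_{\mathbf 1}(q)>0$, and lifts this to $\cpc_{\mathbf 1}(p)>0$ via Euler's identity. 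Your route is shorter and more conceptual but imports the polymatroid description of M-convex hulls from outside the paper; the paper's route is longer but entirely self-contained, using only the raw exchange axiom.

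Your option~(a), however, does not close as written. Applying Hall to $[n]\setminus E$ yields a $\nu$ with $\sum_{i\notin E}\nu_i\ge n-|E|$, hence $\sum_{i\in E}(\mu_i-\nu_i)\ge |Z|>0$, so some $i\in E$ has $\mu_i>\nu_i$ and you can exchange. But Proposition~\ref{prop:symm-exch} does not let you choose the partner index: nothing rules out that the produced $j$ has $\mu_j=\mu_i-1$ (so $\Phi$ is unchanged) or even $\mu_j\ge\mu_i$ (so $\Phi$ increases). The same obstruction persists if you run the exchange starting from $j\in Z$: the guaranteed $k$ with $\mu_k>\nu_k$ may well have $\mu_k=1$, $\nu_k=0$, since Hall on $[n]\setminus E$ gives only an aggregate bound $\sum_{i\notin E}\nu_i\ge n-|E|$ and does not force $\nu_k\ge 1$ on each $k$ with $\mu_k=1$. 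To make~(a) work you would need a secondary potential or an iterated-exchange/no-cycling argument, at which point you are essentially reproving the polymatroid description you invoke in~(b).
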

\begin{proof}
    We prove the contrapositive of the first statement.
    Let $S \subseteq [n]$ be such that $\deg_S(p) < |S|$.
    So for every degree vector $v$ which shows up in $p$ we have
    \[
        \sum_{i \in S} v_i < |S|,
    \]
    and therefore the same inequality holds for every $v \in \Newt(p)$.
    In particular, $\mathbf{1} \not\in \Newt(p)$ and so $\cpc_\mathbf{1}(p) = 0$ by Proposition \ref{prop:cap-Newt}.
    
    Now suppose that $p$ is strongly log-concave and that $p$ is a Hall polynomial.
    We set out to show that $q := \left.\partial_{x_n} p\right|_{x_n=0}$ is a Hall polynomial.
    Fix $S \subseteq [n-1]$, and let $\mu \in \supp(p)$ be such that $\deg_S(x^\mu) \geq |S|$. We have three cases.
    
    \textbf{Case 1:} $\deg_n(x^\mu) \geq 1$.
    Let $\nu \in \supp(p)$ be such that $\deg_{[n-1]}(x^\nu) \geq n-1$, so that $\deg_n(x^\nu) \leq 1$.
    By applying symmetric exchange (Proposition \ref{prop:symm-exch}) from $\mu$ to $\nu$, there exists $\mu' \in \supp(p)$ such that $\deg_S(x^{\mu'}) \geq |S|$ and $\deg_n(x^{\mu'}) = 1$.
    This implies $\deg_S(q) \geq |S|$.
    
    \textbf{Case 2:} $\deg_n(x^\mu) = 0$ and $\deg_S(x^\mu) > |S|$.
    Let $\nu \in \supp(p)$ be such that $\deg_n(x^\nu) \geq 1$.
    By applying symmetric exchange from $\nu$ to $\mu$, there exists $\mu' \in \supp(p)$ such that $\deg_S(x^{\mu'}) \geq |S|$ and $\deg_n(x^{\mu'}) = 1$.
    This implies $\deg_S(q) \geq |S|$.
    
    \textbf{Case 3:} $\deg_n(x^\mu) = 0$ and $\deg_S(x^\mu) = |S|$.
    Let $\nu \in \supp(p)$ be such that $\deg_{S \cup \{n\}}(x^\nu) \geq |S|+1$.
    Letting $T := [n-1] \setminus S$, we have that $\deg_T(x^\mu) = |T|+1$ and $\deg_T(\nu) \leq |T|$.
    Apply symmetric exchange from $\mu$ to $\nu$, choosing indices from $T$ to remove from $\mu$, until we have $\mu' \in \supp(p)$ such that $\deg_j(x^{\mu'}) \leq \deg_j(x^\nu)$ for all $j \in T$.
    This implies $\deg_T(x^{\mu'}) \leq \deg_T(x^\nu) \leq |T| = n-1-|S|$ and $\deg_S(x^{\mu'}) \geq |S|$.
    Therefore either $\deg_n(x^{\mu'}) \geq 1$ or $\deg_S(x^{\mu'}) > |S|$, and so one of the previous two cases can be applied to $x^{\mu'}$.
    
    In any case we have $\deg_S(q) \geq |S|$, and therefore $q$ is a Hall polynomial.
    Since $q$ is also strongly log-concave, we inductively have $\cpc_\mathbf{1}(q) > 0$.
    By Euler's identity, this implies
    \[
        \cpc_\mathbf{1}(p) \geq \frac{1}{n} \sum_{i=1}^n \cpc_\mathbf{1}(x_i \cdot \left.\partial_{x_i} p\right|_{x_i=0}) > 0.
    \]
\end{proof}

\noindent
The following Van der Waerden-type bound for $n$-variate, $n$-homogeneous strongly log-concave polynomials $p$ was proven by Gurvits in \cite{gurvits2009polynomial, gurvits2009multivariate}:
\[
    \partial_{x_1} \cdots \partial_{x_n} p \geq \frac{n!}{n^n} \cpc_{\bm{1}}(p).
\]

\begin{lemma} \label{lem:nabla-deg-bound}
    Let $p \in \R_+[x_1,\ldots,x_n]$ be such that $p(\mathbf{1}) = 1$, and let $\nabla p(\mathbf{1}) = \alpha$. For all $S \subseteq [n]$, we have
    \[
        \sum_{i \in S} \alpha_i \leq \deg_S(p).
    \]
\end{lemma}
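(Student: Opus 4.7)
The plan is to expand $p$ in the monomial basis and recognize the left-hand side as a convex combination of per-monomial $S$-degrees. Write $p = \sum_{\mu} c_\mu x^\mu$ where $c_\mu \geq 0$ (this uses the hypothesis $p \in \R_+[x_1,\ldots,x_n]$). The normalization $p(\mathbf{1}) = 1$ becomes $\sum_\mu c_\mu = 1$, so the numbers $\{c_\mu\}_{\mu \in \supp(p)}$ form a probability distribution on $\supp(p)$.

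Next, I would compute both sides in this basis. For the gradient, $\partial_{x_i} p(\mathbf{1}) = \sum_\mu c_\mu \mu_i$, so
\[
    \sum_{i \in S} \alpha_i = \sum_{i \in S} \partial_{x_i} p(\mathbf{1}) = \sum_\mu c_\mu \sum_{i \in S} \mu_i = \sum_\mu c_\mu \deg_S(x^\mu).
\]
On the other hand, by the very definition of $\deg_S(p)$, every $\mu \in \supp(p)$ satisfies $\deg_S(x^\mu) \leq \deg_S(p)$.

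Combining these, $\sum_{i \in S}\alpha_i$ is a convex combination of quantities each bounded by $\deg_S(p)$, hence is itself bounded by $\deg_S(p)$. There is no real obstacle here — the only thing to verify is the (completely standard) interchange of the derivative and the finite monomial sum, and the fact that $\deg_S$ is attained on $\supp(p)$ and hence upper-bounds $\deg_S(x^\mu)$ for every monomial appearing in $p$. The statement is essentially a Jensen-type averaging inequality for the probability measure $\{c_\mu\}$ against the linear functional $\mu \mapsto \sum_{i \in S} \mu_i$.
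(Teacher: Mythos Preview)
Your proof is correct but takes a different route from the paper. The paper first reduces to the case $S=[n]$ by specializing $x_i=1$ for $i\notin S$, then homogenizes $p$ with an extra variable to obtain $P$ with $\nabla P(\mathbf{1})=(\alpha_1,\ldots,\alpha_n,\beta)$; Euler's identity gives $\beta+\sum_i\alpha_i=\deg(P)=\deg(p)$, and since $\beta\geq 0$ the bound follows. Your argument instead works directly for every $S$ via the monomial expansion, interpreting $\sum_{i\in S}\alpha_i$ as the expectation of $\deg_S(x^\mu)$ under the probability weights $c_\mu$. Your approach avoids the homogenization trick and handles all $S$ uniformly without a reduction step; the paper's approach, on the other hand, packages the inequality as a single identity plus a sign observation, which some readers may find conceptually cleaner. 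Both are equally short and elementary.
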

\begin{proof}
    By plugging in $x_i = 1$ for all $i \not\in S$, we may assume that $S = [n]$.
    (Note that we are not assuming $p$ is homogeneous.)
    Letting $P$ be the homogenization of $p$, we have that $\nabla P(\mathbf{1}) = (\alpha_1,\ldots,\alpha_n,\beta)$ and $\deg(P) = \deg(p)$.
    Therefore
    \[
        \beta + \sum_{i=1}^n \alpha_i = \deg(P) \implies \sum_{i=1}^n \alpha_i \leq \deg(p).
    \]
    This completes the proof.
\end{proof}

\begin{proposition} \label{prop:hall-cap-norm}
    Given $\alpha \in \R_+^n$ such that $\sum_i \alpha_i = n$, the following are equivalent:
    \begin{enumerate}
        \item $\|\mathbf{1}-\alpha\|_1 < 2$.
        \item $L_n^{\SLC}(\alpha) > 0$.
        \item $L_n(\alpha) \equiv L_n^{\Prod}(\alpha) > 0$.
        \item $\sum_{i \in F} \alpha_i > |F|-1$ for all $F \subseteq [n]$.
        \item Every $p \in \SLC_n(\alpha)$ is a Hall polynomial.
    \end{enumerate}
\end{proposition}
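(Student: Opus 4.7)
The strategy is to close a cycle of implications $(1) \iff (4) \implies (5) \implies (2) \implies (3) \implies (4)$, leveraging the basic lemmas already collected; most of the real work lives at the endpoints.

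First, $(1) \iff (4)$ is a direct calculation. Writing $F_- = \{i : \alpha_i \leq 1\}$ and $F_+ = \{i : \alpha_i > 1\}$, the constraint $\sum_i \alpha_i = n$ gives $\sum_{i \in F_+}(\alpha_i - 1) = \sum_{i \in F_-}(1 - \alpha_i)$, so $\|\mathbf{1} - \alpha\|_1 = 2\sum_{i \in F_-}(1 - \alpha_i) = 2 \max_{F \subseteq [n]} \sum_{i \in F}(1 - \alpha_i)$; this maximum is $<1$ exactly when $\sum_{i \in F}\alpha_i > |F| - 1$ for every $F$. Next, $(4) \implies (5)$ uses Lemma \ref{lem:nabla-deg-bound}: for any $p \in \SLC_n(\alpha)$ and any $F \subseteq [n]$, it gives $\deg_F(p) \geq \sum_{i \in F}\alpha_i > |F| - 1$, and since $\deg_F(p)$ is an integer this forces $\deg_F(p) \geq |F|$, the Hall condition. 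The implication $(5) \implies (2)$ is powered by Lemma \ref{lem:Hall-SLC}, which upgrades the Hall condition to pointwise positivity of capacity; promoting this to a uniform positive lower bound on $\SLC_n(\alpha)$ calls for compactness of that set (it sits inside the probability simplex of coefficients of $n$-homogeneous polynomials normalized by $p(\mathbf{1}) = 1$) together with suitable continuity of $\cpc_\mathbf{1}$ on the stratum where $\mathbf{1} \in \supp(p)$. Finally $(2) \implies (3)$ is immediate from $\Prod_n(\alpha) \subseteq \SLC_n(\alpha)$.

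The closing implication $(3) \implies (4)$ I would prove by contraposition. Given $F \subseteq [n]$ with $\sum_{i \in F}\alpha_i \leq |F| - 1$, so complementarily $\sum_{i \in F^c}\alpha_i \geq |F^c| + 1$, pick any $r \in F$ and look for $A \in \Mat_n(\alpha)$ whose rows indexed by $R := F^c \cup \{r\}$ are supported only on columns indexed by $F^c$. Feasibility is a transportation-polytope check: rows in $R$ deposit mass $|R| = |F^c| + 1 \leq \sum_{i \in F^c}\alpha_i$ into columns $F^c$, while the remaining rows $F \setminus \{r\}$ carry mass $|F| - 1 \geq \sum_{i \in F}\alpha_i$ with which to cover columns $F$, and the totals match on both sides. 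Such an $A$ has an $(|F^c|+1) \times |F|$ zero block of dimensions summing to $n + 1 > n$, so Corollary \ref{cor:cap-per} forces $\cpc_\mathbf{1}(\prod_i (Ax)_i) = 0$ and thus $L_n(\alpha) = 0$.

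The main technical obstacle I expect is the semicontinuity/compactness step in $(5) \implies (2)$; the combinatorial and transportation-polytope ingredients for the remaining implications are short once the correct Hall-type conditions and zero blocks are identified.
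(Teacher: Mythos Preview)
Your approach matches the paper's: the same cycle $(1) \iff (4)$ together with $(4) \implies (5) \implies (2) \implies (3) \implies (4)$, invoking Lemmas~\ref{lem:nabla-deg-bound} and~\ref{lem:Hall-SLC} at the same spots, and the same zero-block construction for $\neg(4) \Rightarrow \neg(3)$ (the paper builds the block directly with equal row sums $\beta = \tfrac{\sum_{i \in F}\alpha_i}{|F|-1} \leq 1$ rather than via a transportation-polytope check, but the idea is identical). Your compactness worry in $(5) \implies (2)$ is not a point of divergence either: the paper writes only ``Follows from Lemma~\ref{lem:Hall-SLC}'' and leaves the passage from pointwise to uniform positivity just as implicit as you do.
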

\begin{proof}
    Let $\delta := 1-\alpha$, so that $\sum_i \delta_i = 0$. For any $F \subseteq [n]$, let $F = F_+ \sqcup F_-$ such that $\delta_i \geq 0$ for $i \in F_+$ and $\delta_i < 0$ for $i \in F_-$.
    
    $(1) \implies (4)$. For any $F$, we have
    \[
        \sum_{i \in F} \alpha_i = |F| - \sum_{i \in F} \delta_i = |F| + \sum_{i \in F_-} \delta_i - \sum_{i \in F_+} \delta_i.
    \]
    Since $\sum_i \delta_i = 0$ and $\sum_i |\delta_i| < 2$, we have that $\sum_{i \in F_+} \delta_i < 1$. This implies
    \[
        \sum_{i \in F} \alpha_i = |F| + \sum_{i \in F_-} \delta_i - \sum_{i \in F_+} \delta_i > |F| - 1.
    \]
    
    $(4) \implies (1)$. Letting $F = [n]$ with $F_+,F_-$ defined as above, we have
    \[
        |F_+| - \sum_{i \in F_+} \delta_i = \sum_{i \in F_+} \alpha_i > |F_+| - 1 \implies \sum_{\delta_i \geq 0} \delta_i < 1.
    \]
    Since $\sum_i \delta_i = 0$, this implies
    \[
        \|1-\alpha\|_1 = \sum_i |\delta_i| = \sum_{\delta_i \geq 0} \delta_i - \sum_{\delta_i < 0} \delta_i < 2.
    \]
    
    $(2) \implies (3)$. Trivial.
    
    $(3) \implies (4)$. So as to get a contradiction, suppose there is some $k \in [n]$ such that $\alpha_1 + \cdots + \alpha_k \leq k-1$.
    We now construct a matrix $A \in \Mat_n(\alpha)$ such that $\cpc_\mathbf{1}(\prod_i (Ax)_i) = 0$.
    Let $A_1$ be a $(k-1) \times k$ matrix with column sums $\alpha_1,\ldots,\alpha_k$ and row sums all equal to $\beta := \frac{\alpha_1 + \cdots + \alpha_k}{k-1}$.
    Since $\beta \leq 1$, we can define
    \[
        A := \begin{bmatrix}
            A_1 & * \\
            0 & *
        \end{bmatrix} \in \Mat_n(\alpha),
    \]
    where the bottom-left $(n-k+1) \times k$ block of $A$ is 0.
    Since $(n-k+1)+k > 0$, Corollary \ref{cor:cap-per} implies $\cpc_\mathbf{1}(\prod_i (Ax)_i) = 0$.
    Therefore $L_n(\alpha) = 0$.
    
    $(4) \implies (5)$. Follows from Lemma \ref{lem:nabla-deg-bound}.
    
    $(5) \implies (2)$. Follows from Lemma \ref{lem:Hall-SLC}.
    
\end{proof}

\noindent
We now state the main result of this section: a lower bound on the capacity of polynomials in $\Prod_n(\alpha)$.

\begin{theorem} \label{thm:prod-bound}
    Fix $n \in \N$, $\alpha \in \R_+^n$, and $p \in \Prod_n(\alpha)$. If $\|1-\alpha\|_1 < 2$, then
    \[
        \cpc_\mathbf{1}(p) \geq \left(1 - \frac{\|1-\alpha\|_1}{2}\right)^n.
    \]
\end{theorem}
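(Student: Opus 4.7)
The plan is induction on $n$. The base case $n=1$ is immediate: $\alpha=(1)$ forces $A=[1]$, so $p(x)=x_1$ and $\cpc_\mathbf{1}(p)=1=(1-0)^1$.

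For the inductive step, I would first reduce to an extreme point of the transportation polytope $\Mat_n(\alpha)$. For any $A_1,A_2\in\Mat_n(\alpha)$ and $\lambda\in[0,1]$, applying weighted AM-GM factor by factor gives
\[
    \prod_i\bigl(\lambda(A_1 x)_i+(1-\lambda)(A_2 x)_i\bigr)\geq\prod_i(A_1 x)_i^\lambda(A_2 x)_i^{1-\lambda},
\]
and dividing by $\prod_j x_j=(\prod x_j)^\lambda(\prod x_j)^{1-\lambda}$ then taking $\inf_{x>0}$ yields $\cpc_\mathbf{1}(p_{\lambda A_1+(1-\lambda)A_2})\geq\cpc_\mathbf{1}(p_{A_1})^\lambda\cpc_\mathbf{1}(p_{A_2})^{1-\lambda}$. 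Thus $A\mapsto\log\cpc_\mathbf{1}(p_A)$ is concave on $\Mat_n(\alpha)$ and attains its minimum at an extreme point $A^*$, whose support forms a spanning forest in the bipartite incidence graph with at most $2n-1$ nonzero entries.

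A counting argument then shows $A^*$ must have a column leaf: if every column had support-degree $\geq 2$, the total edge count would be at least $2n$, contradicting the $2n-1$ bound. Combined with $\alpha_j>0$ (from strict Hall, Proposition \ref{prop:hall-cap-norm}), some column $j$ has degree exactly $1$, with its unique nonzero entry $A^*_{i_0,j}=\alpha_j$ (necessarily $\leq 1$ since bounded by row $i_0$'s sum). This gives the factorization $(A^* x)_{i_0}=\alpha_j x_j+R(x_{-j})$ where $R(x_{-j})=\sum_{l\neq j}A^*_{i_0,l}x_l$, while for $k\neq i_0$ the factor $(A^* x)_k$ does not depend on $x_j$ (since $A^*_{k,j}=0$). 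Taking $x_j\to\infty$ in the capacity infimum makes $R/x_j$ vanish, so
\[
    \cpc_\mathbf{1}(p)=\alpha_j\cdot\cpc_\mathbf{1}\bigl(\prod_k(A''x'')_k\bigr),
\]
where $A''\in\Mat_{n-1}(\alpha'')$ is obtained by deleting row $i_0$ and column $j$, with $\alpha''_l=\alpha_l-A^*_{i_0,l}$ for $l\neq j$.

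Two facts close the argument. First, the triangle inequality gives $|1-\alpha''_l|\leq|1-\alpha_l|+A^*_{i_0,l}$; summing over $l\neq j$ and using $\sum_{l\neq j}A^*_{i_0,l}=1-\alpha_j=|1-\alpha_j|$ (since $\alpha_j\leq 1$) yields $\|1-\alpha''\|_1\leq\|1-\alpha\|_1=2\epsilon$. Thus $\epsilon''\leq\epsilon$, and the inductive hypothesis gives $\cpc_\mathbf{1}(\prod_k(A''x'')_k)\geq(1-\epsilon'')^{n-1}\geq(1-\epsilon)^{n-1}$. Second, rewriting Proposition \ref{prop:hall-cap-norm}(4) as $1-\epsilon=\min_F[\sum_{i\in F}\alpha_i-(|F|-1)]$ and specializing to the singleton $F=\{j\}$ gives $\alpha_j\geq 1-\epsilon$. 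Combining the two, $\cpc_\mathbf{1}(p)\geq\alpha_j(1-\epsilon)^{n-1}\geq(1-\epsilon)^n$. The main insight is that a column leaf provides exactly the right structure: its column sum contributes a singleton Hall margin of $1-\epsilon$, while the reduction weakly preserves the $\ell_1$ deficit $\|1-\alpha\|_1$ via a triangle-inequality argument. The hardest step to get right is verifying that a column leaf (as opposed to a row leaf, which might point to a column with $\alpha_{j_0}>1$) always exists for extremal $A^*$—resolved here by the edge-counting bound on forest support.
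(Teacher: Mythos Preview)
Your proof is correct, but takes a genuinely different route from the paper's.

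The paper does not use induction or extreme points. Instead, for a given $A\in\Mat_n(\alpha)$ it extracts the largest doubly stochastic ``core'': it maximizes $\gamma$ subject to $A-\gamma D\geq 0$ entrywise for some $D\in\Mat_n(\bm{1})$, observes that the residual $M=(A-\gamma_0 D_0)/(1-\gamma_0)$ must have $\per(M)=0$ (else $\gamma_0$ could be increased), and then invokes Proposition~\ref{prop:hall-cap-norm} on the column sums $\tilde\alpha$ of $M$ to conclude $\|1-\tilde\alpha\|_1\geq 2$, which rearranges to $\gamma_0\geq 1-\|1-\alpha\|_1/2$. The bound then follows in one line from $A\geq\gamma_0 D_0$ and $\cpc_{\bm{1}}(p_{D_0})=1$.

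Your approach instead reduces to vertices of the transportation polytope via the log-concavity of $A\mapsto\cpc_{\bm{1}}(p_A)$ (a fact the paper also records, in Section~\ref{sec:algo-min-capacity}, but for a different purpose), finds a column leaf by edge-counting on the forest support, and peels off one linear factor to recurse. The two key estimates you need---$\alpha_j\geq 1-\epsilon$ from the singleton Hall inequality, and $\|1-\alpha''\|_1\leq\|1-\alpha\|_1$ from the triangle inequality using $\alpha_j\leq 1$---both check out. The paper's argument is shorter and more uniform (no induction, no case analysis on support structure), while yours is more combinatorial and makes explicit use of the vertex structure of $\Mat_n(\alpha)$; it also makes the role of the singleton Hall margin $\alpha_j\geq 1-\epsilon$ transparent, whereas in the paper that information is absorbed into the global bound on $\gamma_0$.
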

\begin{proof}
    Define $\delta := 1-\alpha$.
    We just need to prove
    \[
        L_n(\alpha) \geq \left(1 - \frac{\|\delta\|_1}{2}\right)^n.
    \]
    Define
    \[
        S := \{(\gamma, D) \in \R_+ \times \Mat_n(\mathbf{1}) ~:~ A - \gamma D \geq 0 ~~ \text{entrywise}\},
    \]
    and further define $(\gamma_0, D_0) \in S$ to be such that $\gamma_0$ is maximized.
    (This maximum exists by compactness of $\Mat_n(\mathbf{1})$, the Birkhoff polytope.)
    Now consider the matrix
    \[
        M = \frac{A - \gamma_0 D_0}{1 - \gamma_0},
    \]
    which is an element of $\Mat_n(\tilde{\alpha})$ for $\tilde{\alpha} = \frac{\alpha-\gamma_0}{1-\gamma_0}$.
    We now show that $\per(M) = 0$.
    If not, then there is some permutation matrix $P$ and some $\epsilon > 0$ such that
    \[
        A - (\gamma_0 + \epsilon) \cdot \frac{\gamma_0 D_0 + \epsilon P}{\gamma_0 + \epsilon}
    \]
    is entrywise non-negative.
    Since $\frac{\gamma_0 D_0 + \epsilon P}{\gamma_0 + \epsilon} \in \Mat_n(\mathbf{1})$, this contradicts the maximality of $\gamma_0$.
    So in fact $\per(M) = 0$, and therefore $L_n(\tilde{\alpha}) = 0$ by Corollary \ref{cor:cap-per}.
    By Proposition \ref{prop:hall-cap-norm}, this implies
    \[
        \left\|\frac{\delta}{1-\gamma_0}\right\|_1 = \|1 - \tilde{\alpha}\|_1 \geq 2 \implies \gamma_0 \geq 1 - \frac{\|\delta\|_1}{2}.
    \]
    Since $D_0 \in \Mat_n(\mathbf{1})$, we have that $\cpc_\mathbf{1}(\prod_i (D_0x)_i) = 1$ by Proposition \ref{prop:ds-cap}.
    The fact that $A \geq \gamma_0 D_0$ entrywise then implies
    \[
        \cpc_\mathbf{1}\left[\prod_{i=1}^n (Ax)_i\right] \geq \cpc_\mathbf{1}\left[\prod_{i=1}^n (\gamma_0 D_0x)_i\right] \geq \gamma_0^n \geq \left(1 - \frac{\|\delta\|_1}{2}\right)^n.
    \]
\end{proof}

\noindent
We finally state the following corollary of Proposition \ref{prop:hall-cap-norm} and Theorem \ref{thm:prod-bound}, which gives a similar result for the 2-norm instead of the 1-norm.

\begin{corollary} \label{cor:2-norm-cap}
    Fix $n \in \N$, $\alpha \in \R_+^n$, and $p \in \Prod_n(\alpha)$. If $\|1 - \alpha\|_2 < \frac{2}{\sqrt{n}}$, then
    \[
        \cpc_\mathbf{1}(p) \geq \left(1 - \frac{\sqrt{n} \cdot \|1 - \alpha\|_2}{2}\right)^n.
    \]
\end{corollary}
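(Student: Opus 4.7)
The plan is to derive this from Theorem \ref{thm:prod-bound} by a direct application of the standard Cauchy--Schwarz comparison between the $\ell_1$ and $\ell_2$ norms on $\R^n$. That inequality gives, for any vector $v \in \R^n$,
\[
    \|v\|_1 = \sum_{i=1}^n |v_i| \cdot 1 \leq \sqrt{n} \cdot \|v\|_2.
\]
Applying this with $v = \mathbf{1} - \alpha$, the hypothesis $\|\mathbf{1}-\alpha\|_2 < \frac{2}{\sqrt{n}}$ immediately yields $\|\mathbf{1}-\alpha\|_1 \leq \sqrt{n}\,\|\mathbf{1}-\alpha\|_2 < 2$, which puts us squarely within the hypothesis of Theorem \ref{thm:prod-bound}.

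Invoking that theorem, we would conclude
\[
    \cpc_\mathbf{1}(p) \geq \left(1 - \frac{\|\mathbf{1}-\alpha\|_1}{2}\right)^n.
\]
The same Cauchy--Schwarz bound $\|\mathbf{1}-\alpha\|_1 \leq \sqrt{n}\,\|\mathbf{1}-\alpha\|_2$ rearranges to
\[
    1 - \frac{\|\mathbf{1}-\alpha\|_1}{2} \geq 1 - \frac{\sqrt{n} \cdot \|\mathbf{1}-\alpha\|_2}{2}.
\]
Both sides are strictly positive under the hypothesis (since $\frac{\sqrt{n}\,\|\mathbf{1}-\alpha\|_2}{2} < 1$), so raising to the $n$-th power preserves the inequality and gives the desired conclusion.

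There is essentially no obstacle here: the result is a one-line consequence of monotonicity of $t \mapsto t^n$ on $[0,\infty)$ and the norm comparison. The only thing to be slightly careful about is verifying positivity of $1 - \frac{\sqrt{n}\,\|\mathbf{1}-\alpha\|_2}{2}$ before exponentiating, which as noted follows directly from the hypothesis. Note also that the role of Proposition \ref{prop:hall-cap-norm} is implicit: it tells us the $\ell_1$ condition $\|\mathbf{1}-\alpha\|_1 < 2$ is the sharp threshold for $L_n(\alpha) > 0$ in the first place, so Theorem \ref{thm:prod-bound} is genuinely the quantitative statement we want to plug into.
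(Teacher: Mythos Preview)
Your proof is correct and matches the paper's approach exactly: the paper's proof is the single line ``Follows from $\|x\|_1 \leq \sqrt{n}\,\|x\|_2$ for $x \in \R^n$,'' and you have simply spelled out the details of applying this norm comparison inside Theorem \ref{thm:prod-bound}.
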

\begin{proof}
    Follows from $\|x\|_1 \leq \sqrt{n} \|x\|_2$ for $x \in \R^n$.
\end{proof}

\noindent
The corollary is a sharp improvement of the following similar inequality proved in \cite{linial2000deterministic} in the case that $\|1-\alpha\|_2 < \frac{1}{\sqrt{n}}$:
\[
    L_{n}(\alpha) \geq \left(1 - \sqrt{n} \cdot \|1 - \alpha\|_2\right)^{n}.
\]
This last inequality also plays a key role in the recent work on the operator scaling and its generalizations and applications, see \cite{garg2018algorithmic}.

\section{Improved Capacity Bound for Non-Homogeneous Product Polynomials} \label{sec:imp-bound-prod}

In this section we prove the main result (Theorem \ref{thm:main_nh}) for non-homogeneous product polynomials. For this section, we utilize slightly different notation to handle non-homogeneous polynomials.

\begin{definition}
    For $n,d \in \N$ and $\alpha \in \R_+^n$, we define the following:
    \begin{enumerate}
        \item $\NHMat_n^d(\alpha)$ is the set of all $d \times (n+1)$ matrices with row sums all equal to 1 and column sums equal to $\alpha_1,\ldots,\alpha_n,d-\|\alpha\|_1$.
        \item $\NHProd_n^d(\alpha)$ is the set of all polynomials of the form
        \[
            p(x) = \prod_{i=1}^d \left(a_{i,n+1} + \sum_{j=1}^n a_{i,j} x_j\right),
        \]
        where $A \in \NHMat_n^d(\alpha)$. In this case, we call $p$ the \emph{polynomial associated to $A$}. Note that $p(\mathbf{1}) = 1$ and $\nabla p(\mathbf{1}) = \alpha$ for all such polynomials.
        \item $\NHStab_n^d(\alpha)$ is the set of all real stable polynomials in $\R_+[x_1,\ldots,x_n]$ of degree at most $d$ for which $p(\mathbf{1}) = 1$ and $\nabla p(\textbf{1})= \alpha$.
    \end{enumerate}
\end{definition}

\noindent
We also define the following for $n \in \N$ and $\alpha \in \R_+^n$:
\[
    L^{\NHProd}_n(\alpha;\kappa) := \min_{d \in \N} \min_{p \in \NHProd_n^d(\alpha)} \cpc_\kappa(p).
\]

To prove Theorem \ref{thm:main_nh} for polynomials in $\NHProd_n^d(\alpha)$, we use different techniques than we used in the previous section for homogeneous polynomials. Specifically, a log-concavity property of capacity means we only need to check the bound for the extreme points of $\NHMat_n^d(\alpha)$. An investigation of the extreme points yields an inductive argument which then proves the result.

To this end, we prove a basic result about bipartite forest graphs, which implies a useful property of the support of the polynomials associated to the extreme points of $\NHMat_n^d(\alpha)$.

\begin{lemma} \label{lem:left_leaves}
    Let $G$ be a bipartite forest on $m$ left vertices and $n$ right vertices such that $G$ has no vertices of degree 0. Then $G$ has at least $m-n+1$ left leaves.
\end{lemma}
\begin{proof}
    We prove this by induction on $m+n$, where the base case is any path graph. For this case, we have $m \in \{n-1,n,n+1\}$. If $m = n-1$ then $G$ has 0 left leaves, if $m = n$ then $G$ has 1 left leaf, and if $m = n+1$ then $G$ has two left leaves. Thus the desired result holds in this case.
    
    For the inductive step, $G$ is not a path graph. Let $v$ be any leaf of $G$, and construct a new graph $G'$ as follows. Let $v_0 := v$ and remove $v_0$ from $G_0 := G$ to create the graph $G_1$. Let $v_1$ be the one neighbor of $v_0$ in $G$. If $v_1$ is a leaf or vertex of degree 0 in $G_1$, then remove $v_1$ from $G_1$ to create the graph $G_2$. If $v_1$ was a vertex of degree 0, then stop and define $G' := G_1$. Otherwise let $v_2$ be the one neighbor of $v_1$, and continue this process inductively until $v_k$ is not a leaf or a vertex of degree 0 in $G_k$. Once the process stops, define $G' := G_k$. Note that $G'$ is a bipartite forest which has no vertices of degree 0, and $G'$ is non-empty since $G$ is not a path graph. Further note that every leaf of $G'$ is also a leaf of $G$ (that is, we have not created new leaves by the above vertex removal process). We now have two cases: $v$ is a left leaf of $G$, or $v$ is a right leaf of $G$.
    
    First suppose $v$ is a left leaf of $G$. Then for some $i$, $G'$ has $m-i$ left vertices and at most $n-i+1$ right vertices. Thus by induction, the number of left leaves of $G$ is at least
    \[
        1 + (m-i) - (n-i+1) + 1 = m - n + 1
    \]
    since $v$ is a left leaf. Thus the result holds in this case.
    
    Next suppose $v$ is a right leaf of $G$. Then for some $i$, $G'$ has $m-i$ left vertices and at most $n-i$ right vertices. Thus by induction, the number of left leaves of $G$ is at least
    \[
        (m-i) - (n-i) + 1 = m - n + 1.
    \]
    Thus the result holds in this case as well.
\end{proof}

\begin{lemma} \label{lem:forest_extreme}
    Any extreme point of $\NHMat_n^d(\alpha)$ has support given by a bipartite forest on $d$ left vertices and $n+1$ right vertices.
\end{lemma}
\begin{proof}
    Let $M$ be an extreme point of $\NHMat_n^d(\alpha)$, and suppose its bipartite support graph $G$ does not give a forest. Then $G$ must contain an even simple cycle. Group the edges of this cycle into two groups such that the odd edges make up one group, and the even edges make up the other (with any starting point). Add $\epsilon > 0$ to all matrix entries corresponding to even edges and subtract $\epsilon$ to all matrix entries corresponding to odd edges, to construct $M_+ \in \NHMat_n^d(\alpha)$. Do the same thing, but reverse the signs, to construct $M_- \in \NHMat_n^d(\alpha)$. Thus $M = \frac{M_+ + M_-}{2}$, contradicting the fact that $M$ is an extreme point.
\end{proof}

\noindent
With this, we now prove a basic capacity lemma which will serve as the main induction step of the proof of Theorem \ref{thm:main_nh} for product polynomials.

\begin{lemma} \label{lem:cap_induct}
    For $n \geq 1$, let $p \in \NHProd_n^d(\alpha)$ be the polynomial associated to a $d \times (n+1)$ matrix $M \in \NHMat_n^d(\alpha)$ such that column $n$ has exactly one non-zero entry, and fix $\kappa \in \Z_+^n$ such that $\|\alpha-\kappa\|_1 \leq 1-\epsilon$ for some $\epsilon > 0$. Then there exists $q \in \NHProd_{n-1}^{\delta}(\beta)$ such that
    \[
        \cpc_\kappa(p) \geq \epsilon \cdot \cpc_\gamma(q),
    \]
    where $\gamma = (\kappa_1,\ldots,\kappa_{n-1}) \in \Z_+^{n-1}$, $\delta$ is either $d-1$ or $d$, and $\beta \in \R_+^{n-1}$ is such that $\|\beta-\gamma\|_1 \leq 1 - \epsilon$.
\end{lemma}
\begin{proof}
    Let $i_0$ be the row index of the one non-zero entry in column $n$. Since all row sums of $M$ are equal to 1, $m_{i_0,n} \leq 1$ and thus $\alpha_n \leq 1$. Since $\|\alpha-\kappa\|_1 \leq 1 - \epsilon < 1$, this implies $\kappa_n$ is either 0 or 1. We now deal with these two cases separately.
    
    First we handle the $\kappa_n = 1$ case. In this case, we define $q(x) = q(x_1,\ldots,x_{n-1})$ via
    \[
        p(x) = \left(m_{i_0,n+1} + \sum_{j=1}^n m_{i_0,j} x_j\right) \cdot q(x).
    \]
    That is, $q(x)$ is the polynomial associated to the matrix obtained by removing row $i_0$ of $M$. We then have
    \[
        \cpc_\kappa(p) \geq \cpc_{e_n}\left(m_{i_0,n+1} + \sum_{j=1}^n m_{i_0,j} x_j\right) \cdot \cpc_\gamma(q) = m_{i_0,n} \cdot \cpc_\gamma(q),
    \]
    where $e_n$ is the $n^{\text{th}}$ standard basis vector. Note that $q$ is the polynomial associated to a $(d-1) \times n$ matrix $A$ with row sums all equal to 1. Let $\beta$ be the vector of the first $n-1$ column sums of this matrix, so that $q \in \NHProd_{n-1}^{d-1}(\beta)$. Now note that
    \[
        m_{i_0,n} = 1 - |m_{i_0,n} - 1| \geq 1 - \|\alpha-\kappa\|_1 \geq 1 - (1-\epsilon) = \epsilon.
    \]
    And further, by the triangle inequality we have
    \[
        \|\beta - \gamma\|_1 = \sum_{j=1}^{n-1} |\alpha_j - m_{i_0,j} - \kappa_j| = -1 + \sum_{j=1}^n |\alpha_j - m_{i_0,j} - \kappa_j| \leq \|\alpha - \kappa\|_1 - 1 + \sum_{j=1}^n m_{i_0,j} \leq \|\alpha - \kappa\|_1.
    \]
    This implies the desired result.
    
    Next we handle the $\kappa_n = 0$ case. In this case, we define $q_0(x) = q_0(x_1,\ldots,x_{n-1})$ via
    \[
        q_0(x) = p(x_1,\ldots,x_{n-1},0),
    \]
    so that
    \[
        \cpc_\kappa(p) = \cpc_\gamma(q_0).
    \]
    Note that $q_0$ is the polynomial associated to a $d \times n$ matrix $A$, where all row sums of $A$ are equal to 1 except for row $i_0$ which has row sum equal to $1 - m_{i_0,n} = 1 - \alpha_n$. Note that $\alpha_n \leq \|\alpha - \kappa\|_1 \leq 1 - \epsilon$ implies $1 - \alpha_n \geq \epsilon > 0$. We now construct a new matrix $B \in \NHMat_{n-1}^d(\beta)$ by dividing row $i_0$ of $A$ by $1 - \alpha_n$. Defining $q(x)$ to be the polynomial associated to the matrix $B$, we have
    \[
        \cpc_\kappa(p) = \cpc_\gamma(q_0) = (1 - \alpha_n) \cdot \cpc_\gamma(q) \geq \epsilon \cdot \cpc_\gamma(q).
    \]
    Finally, by the triangle inequality we compute
    \[
    \begin{split}
        \|\beta - \gamma\|_1 &= \sum_{j=1}^{n-1} \left|\alpha_j - m_{i_0,j} + \frac{m_{i_0,j}}{1-\alpha_n} - \kappa_j\right| = \sum_{j=1}^{n-1} \left|\alpha_j + \frac{m_{i_0,j} \cdot \alpha_n}{1-\alpha_n} - \kappa_j\right| \\
            &\leq \|\alpha - \kappa\|_1 - \alpha_n + \frac{\alpha_n}{1-\alpha_n} \sum_{j=1}^{n-1} m_{i_0,j} \leq \|\alpha - \kappa\|_1
    \end{split}
    \]
    This completes the proof.
\end{proof}

\noindent
The following result then allow us to extend capacity bounds associated to extreme points of $\NHMat_n^d(\alpha)$ to all polynomials in $\NHProd_n^d(\alpha)$.

\begin{lemma}[See Theorem 3.1 of \cite{barvinok2008enumerating}] \label{lem:log-concave_cap}
    Given $\kappa \in \Z_+^n$ and $\alpha \in \R_+^n$, let $\phi: \NHMat_n^d(\alpha) \to \R_+$ be the function which maps $M$ to $\cpc_\kappa(p)$ where $p$ is the polynomial associated to $M$. Then $\phi$ is log-concave on $\NHMat_n^d(\alpha)$.
\end{lemma}
\begin{proof}
    Fix any $A,B \in \NHMat_n^d(\alpha)$, and let $p,q,f$ be the polynomials associated to $A,B, \frac{A+B}{2}$ respectively. By the AM-GM inequality, we compute
    \[
    \begin{split}
        \cpc_\kappa(f) &= \inf_{x > 0} \frac{\prod_{i=1}^d \left(\frac{a_{i,n+1} + b_{i,n+1}}{2} + \sum_{j=1}^n \frac{a_{i,j} + b_{i,j}}{2} \cdot x_j\right)}{x^\kappa} \\
            &= \inf_{x > 0} \frac{\prod_{i=1}^d \left[\frac{\left(a_{i,n+1} + \sum_{j=1}^n a_{i,j} x_j\right) + \left(b_{i,n+1} + \sum_{j=1}^n b_{i,j} x_j\right)}{2}\right]}{x^\kappa} \\
            &\geq \inf_{x > 0} \left[\frac{\prod_{i=1}^d \left(a_{i,n+1} + \sum_{j=1}^n a_{i,j} x_j\right)}{x^\kappa} \cdot \frac{\prod_{i=1}^d \left(b_{i,n+1} + \sum_{j=1}^n b_{i,j} x_j\right)}{x^\kappa}\right]^{\frac{1}{2}} \\
            &\geq \left[\cpc_\kappa(p) \cdot \cpc_\kappa(q)\right]^{\frac{1}{2}}.
    \end{split}
    \]
\end{proof}

\noindent
With this, we are now ready to prove Theorem \ref{thm:main_nh} for product polynomials.

\begin{theorem} \label{thm:improved_cap_bound}
    For any $\kappa \in \Z_+^n$, $\alpha \in \R_+^n$, and $\epsilon > 0$, if $\|\alpha-\kappa\|_1 \leq 1-\epsilon$ then $\cpc_{\kappa}(p) \geq \epsilon^n$ for every $p \in \NHProd_n^d(\alpha)$. That is, $L_n^{\NHProd}(\alpha;\kappa) \geq \epsilon^n$ whenever $\|\alpha-\kappa\|_1 \leq 1-\epsilon$.
\end{theorem}
\begin{proof}
    We prove the desired result by induction on $(n,d)$ with lexicographical order. The base case is the case where $n=0$, which corresponds to $p \equiv 1$ for any $d$. In this case, $\kappa$ is the empty vector and $\cpc_{\kappa}(p) = 1 = \epsilon^0$, which implies the desired result. (See Example \ref{ex:thm_n1} for the $n=1$ case written out in detail.)
    
    Now for $n \geq 1$, fix any extreme point $M \in \NHMat_n^d(\alpha)$, and let $p$ be the polynomial associated to $M$. By Lemma \ref{lem:forest_extreme}, $M$ is the (weighted) bipartite adjacency matrix of a forest $G$.
    
    If any of the first $n$ columns of $M$ is the all-zeros vector, then $\|\alpha-\kappa\|_1 \leq 1 - \epsilon < 1$ implies the corresponding entry of $\kappa$ is 0. Thus we can remove this column from the matrix $M$, and the result follows by induction. If column $n+1$ of $M$ is the all-zeros vector, then $p$ is homogeneous of degree $d$ which implies $\|\alpha\|_1 = d$. The reverse triangle inequality and $\|\alpha - \kappa\|_1 < 1$ imply $\|\kappa\|_1 = d$. Thus we have
    \[
        \cpc_{\kappa}(p) = \inf_{x > 0} \frac{p(x_1,\ldots,x_n)}{x_1^{\kappa_1} \cdots x_n^{\kappa_n}} = \inf_{x > 0} \frac{p(\frac{x_1}{x_n},\ldots,\frac{x_{n-1}}{x_n},1)}{(\frac{x_1}{x_n})^{\kappa_1} \cdots (\frac{x_{n-1}}{x_n})^{\kappa_{n-1}}} = \cpc_{\gamma}(p(x_1,\ldots,x_{n-1},1)),
    \]
    where $\gamma = (\kappa_1,\ldots,\kappa_{n-1})$. Since we have $p(x_1,\ldots,x_{n-1},1) \in \NHProd_{n-1}^d(\beta)$ where $\beta = (\alpha_1,\ldots,\alpha_{n-1})$, the result follows by induction. Otherwise, $G$ is a forest on $d$ left vertices and $n+1$ right vertices such that no vertices of $G$ have degree 0.
    
    We first consider the case where $d \geq n+1$. Then $G$ has at least as many left vertices as right vertices. By Lemma \ref{lem:left_leaves}, $M$ has (at least) $d-n$ rows with exactly one non-zero entry. Letting $x^v$ be the polynomial associated to these rows of $M$, we have
    \[
        \cpc_{\kappa}(p) = \inf_{x > 0} \frac{x^v f(x)}{x^{\kappa}} = \cpc_{\kappa-v}(f).
    \]
    With this, we now redefine $\kappa$ to be $\kappa - v$, $\alpha$ to be $\alpha - v$, $p$ to be $f$, and $M$ to be the matrix in $\NHMat_n^n(\alpha-v)$ with the $d-n$ rows removed. The result in this case then follows by induction.
    
    Now we consider the case where $d \leq n$. Then $G$ has at least one more right vertex than left vertices. By Lemma \ref{lem:left_leaves}, $M$ has (at least) $n-d+2 \geq 2$ columns with exactly one non-zero entry. Thus one of these columns has index in $\{1,2,\ldots,n\}$, and we may assume by permuting the variables that it in fact is column $n$. By Lemma \ref{lem:cap_induct}, there exists $q \in \Prod_{n-1}^\delta(\beta)$ such that
    \[
        \cpc_{\kappa}(p) \geq \epsilon \cdot \cpc_{\gamma}(q),
    \]
    where $\gamma = (\kappa_1,\ldots,\kappa_{n-1}) \in \Z_+^{n-1}$, $\delta$ is either $d-1$ or $d$, and $\beta \in \R_+^{n-1}$ is such that $\|\beta-\gamma\|_1 \leq 1 - \epsilon$. By induction, this proves the result for polynomials associated to the extreme points of $\NHMat_n^d(\alpha)$. Applying Lemma \ref{lem:log-concave_cap} then completes the proof.
\end{proof}

\begin{example} \label{ex:thm_n1}
    We explicitly prove Theorem \ref{thm:improved_cap_bound} in the $n=1$ case. Fix any extreme point $M \in \NHMat_1^d(\alpha)$, and let $p$ be the polynomial associated to $M$. If either column of $M$ is an all-zeros vector, then either $p(x_1) = x_1^d$ or $p(x_1) \equiv 1$, and the result is trivial in this case. Otherwise, Lemma \ref{lem:left_leaves} implies $d-1$ rows of $M$ have exactly one non-zero entry. Thus for some $k \leq d-1$ we have
    \[
        p(x_1) = x_1^k (a x_1 + b),
    \]
    where $a+b = 1$ and $a + k = \alpha_1$. Since $\kappa_1 \in \Z_+$ and
    \[
        1 - \epsilon \geq |\alpha_1 - \kappa_1| = |a + k - \kappa_1|,
    \]
    we have that $k$ is equal to either $\kappa_1$ or $\kappa_1 - 1$. If $k = \kappa_1$, then
    \[
        \cpc_{\kappa_1}(p) = \inf_{x_1 > 0} \frac{x_1^k(ax_1+b)}{x_1^k} = b,
    \]
    and
    \[
        1 - \epsilon \geq |a + k - \kappa_1| = a \implies b = 1-a \geq \epsilon.
    \]
    If $k = \kappa_1 - 1$, then
    \[
        \cpc_{\kappa_1}(p) = \inf_{x_1 > 0} \frac{x_1^k(ax_1+b)}{x_1^{k+1}} = a,
    \]
    and
    \[
        1 - \epsilon \geq |a + k - \kappa_1| = 1 - a \implies a \geq \epsilon.
    \]
    Therefore the result holds in both cases. Applying Lemma \ref{lem:log-concave_cap} then implies Theorem \ref{thm:improved_cap_bound} in the $n=1$ case.
\end{example}

\section{Productization of Real Stable Polynomials} \label{sec:prod-hstab}

In this section we prove the productization result for $d$-homogeneous polynomials in $n$ variables.
This result immediately implies the main results (Theorem \ref{thm:main} and Theorem \ref{thm:main_nh}) for polynomials in $\HStab_n(\alpha)$ and $\NHStab_n^d(\alpha)$ as corollaries (see Corollary \ref{cor:hstab-cap-bound} and Corollary \ref{cor:imp-cap-bound}).

To actually prove the productization result, we need a way to associate matrices to polynomials.
For the case of $\alpha = \bm{1}$, this statement was conjectured by Gurvits in the slightly different form given below.
The conjecture was motivated by the case of determinantal polynomials, where the desired element of $\Mat_n(\bm{1})$ can be constructed from the matrices in the determinant.
We now state this result, the proof of which was told to us by Petter Br\"and\'en in personal correspondence.

\begin{theorem}[Br\"and\'en] \label{thm:roots-majorization}
    Fix $p \in \HStab_n(\bm{1})$, and let $\lambda(x)$ denote the roots of $f(t) = p(\bm{1}t - x)$ for any $x \in \R^n$. Then for any $x \in \R^n$, there exists some $D \in \Mat_n(\bm{1})$ such that $Dx = \lambda(x)$.
\end{theorem}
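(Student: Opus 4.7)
The plan is to show that the root vector $\lambda(x)$ is majorized by $x$ in the classical sense, and then invoke the Birkhoff--von Neumann theorem to produce $D \in \Mat_n(\bm{1})$ with $Dx = \lambda(x)$. This reduction is possible because the orbit $\{Dx : D \in \Mat_n(\bm{1})\}$ coincides with the permutahedron of $x$, i.e., $\{y \in \R^n : y \prec x\}$.

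First I would collect the two easy ingredients of majorization. Real stability forces $\lambda(x) \in \R^n$: if some root $t_0$ of $p(\bm{1}t - x)$ had nonzero imaginary part, every coordinate of $\bm{1}t_0 - x$ would share that sign of imaginary part, contradicting stability. Comparing the $t^{n-1}$-coefficient in $p(\bm{1}t - x) = \prod_i(t - \lambda_i)$ and using $\nabla p(\bm{1}) = \bm{1}$ then gives the trace identity $\sum_i \lambda_i(x) = \sum_j x_j \partial_j p(\bm{1}) = \sum_j x_j$.

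For the partial-sum inequalities $\sum_{i=1}^k \lambda_i^\downarrow(x) \leq \sum_{i=1}^k x_i^\downarrow$, I would take the hyperbolic-polynomial viewpoint: since $p$ is hyperbolic with respect to $\bm{1}$, by the Bauschke--G\"uler--Lewis--Sendov theorem each partial sum $\Sigma_k(x) := \sum_{i=1}^k \lambda_i^\downarrow(x)$ is convex, positively homogeneous, and shift-invariant as $\Sigma_k(x + c\bm{1}) = \Sigma_k(x) + kc$. After reducing to $x \geq 0$ via the shift, one checks that at each basis vector $e_j$ the roots of $p(\bm{1}t - e_j)$ lie in $[0,1]$ and sum to $1$, so $\Sigma_k(e_j) \leq 1 = \tau_k(e_j)$ where $\tau_k(y) := \sum_{i=1}^k y_i^\downarrow$.

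The main obstacle is upgrading these vertex inequalities $\Sigma_k(e_j) \leq \tau_k(e_j)$ to arbitrary $x \geq 0$. Convexity of $\Sigma_k$ alone only yields $\Sigma_k(x) \leq \sum_j x_j$, which is strictly weaker than the desired $\Sigma_k(x) \leq \tau_k(x)$. The route I would pursue is to show that every subgradient $y \in \partial \Sigma_k(x)$ lies in the hypersimplex $\mathrm{conv}\{\chi_S : S \subseteq [n], |S| = k\}$, because then $\Sigma_k(x) = \langle y, x \rangle \leq \max_{|S|=k} \sum_{j \in S} x_j = \tau_k(x)$. In the determinantal special case $p(x) = \det(\sum_i x_i A_i)$ with $\sum_i A_i = I$ and $\Tr(A_i) = 1$, the matrix $D$ is constructed directly: in an orthonormal eigenbasis $\{v_i\}$ of $\sum_i x_i A_i$, the entries $D_{ij} := v_i^\ast A_j v_i$ form a doubly stochastic matrix with $\lambda(x) = Dx$. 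Extending this construction to real stable $p$ admitting no determinantal representation is the crux, and I would expect Br\"and\'en's proof to proceed either via a polarization or approximation argument within $\HStab_n(\bm{1})$, or via a direct interlacing-based construction of $D$ from the root data of $p$ at $x$.
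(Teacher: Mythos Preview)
Your setup is correct: the statement reduces to showing $\lambda(x) \prec x$, and the trace identity and reality of roots follow as you describe. You also correctly identify the obstacle---convexity of $\Sigma_k$ alone cannot bridge from the basis vectors to general $x$, since $\tau_k$ is convex too. But you do not actually resolve this obstacle: the subgradient-in-hypersimplex claim is stated without proof, and you yourself flag the extension beyond the determinantal case as ``the crux'' left open. As it stands, the proposal is an outline that stops precisely at the hard step.

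The paper's proof closes exactly this gap with a different device. Rather than bounding each $\Sigma_k$ directly, it invokes the full strength of the BGLS inequality (Lemma~3.8 of \cite{bauschke2001hyperbolic}),
\[
\lambda\bigl(\theta x + (1-\theta)y\bigr) \;\prec\; \theta\,\lambda(x) + (1-\theta)\,\lambda(y),
\]
which is strictly stronger than convexity of each $\Sigma_k$. From this it follows that the set $C$ of non-increasing $x$ with $\lambda(x) \prec x$ is a convex cone: if $x,y \in C$ then $\lambda(\theta x + (1-\theta)y) \prec \theta\lambda(x) + (1-\theta)\lambda(y) \prec \theta x + (1-\theta)y$, the second majorization using that $x$ and $y$ are sorted the same way. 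One then checks membership in $C$ not at the $e_j$ but at the vectors $y_k = (1,\ldots,1,0,\ldots,0)$ with $k$ ones---these are the generators of the cone of non-increasing non-negative vectors---by the same root-localization and trace argument you sketch for $e_j$. A final permutation of variables reduces general $x$ to the non-increasing case.

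So the missing idea is: use the BGLS \emph{majorization} inequality to turn the problem into a convexity statement about the \emph{set} $\{x : \lambda(x) \prec x\}$, rather than trying to compare the two convex functions $\Sigma_k$ and $\tau_k$ pointwise.
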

\begin{proof}
    To prove the result, we just need to prove the equivalent statement that $\lambda(x)$ is majorized by the vector $x$, denoted $\lambda(x) \prec x$.
    By Lemma 3.8 of \cite{bauschke2001hyperbolic}, we have that
    \[
        \lambda(\theta \cdot x + (1-\theta) \cdot y) \prec \theta \cdot \lambda(x) + (1-\theta) \cdot \lambda(y).
    \]
    Let $C$ be the set of all non-increasing vectors $x$ for which $\lambda(x)$ is majorized by $x$.
    For all $x,y \in C$, we then have
    \[
        \lambda(\theta \cdot x + (1-\theta) \cdot y) \prec \theta \cdot \lambda(x) + (1-\theta) \cdot \lambda(y) \prec \theta \cdot x + (1-\theta) \cdot y,
    \]
    which implies $C$ is a convex cone.
    
    Now let $y = (1,\ldots,1,0,\ldots,0)$, where there are $k$ ones and $n-k$ zeros.
    By the majorization inequalities, $\lambda(y) \prec y$ in this case is equivalent to $0 \leq \lambda_{\min}(y) \leq \lambda_{\max}(y) \leq 1$ and $\sum_i \lambda_i(y) = k$.
    To see that the roots of $f(t) = p(\bm{1}t-y)$ have these properties, note first that $t < 0$ implies $\bm{1}t-y$ has strictly negative entries, which implies $f(t) = p(\bm{1}t-y) \neq 0$.
    Similarly, $t > 1$ implies $\bm{1}t-y$ has strictly positive entries, which implies $f(t) = p(\bm{1}t-y) \neq 0$.
    Finally, since $p(\bm{1}) = 1$ implies $f$ is monic, we can compute the sum of the roots of $f(t)$ by negating the second from the highest coefficient of $f$.
    Equivalently, we compute
    \[
        -\left.\partial_t\right|_{t=0} p(\bm{1} - ty) = -\left.\sum_{i=1}^n -y_i \partial_{x_i} p(\bm{1} - ty)\right|_{t=0} = \sum_{i=1}^n y_i \partial_{x_i} p(\bm{1}) = k,
    \]
    since $\partial_{y_i}p(\bm{1}) = 1$ for all $i$. That is, $y \in C$ for all values of $k$.
    
    Since $C$ is a convex cone, this implies that every non-increasing vector in the positive orthant is an element of $C$.
    By possibly permuting the input variables of $p$, we can assume without loss of generality that $x$ is non-increasing.
    This proves the result.
\end{proof}

\noindent
We now utilize this result to prove the productization result for homogeneous real stable polynomials, and in particular for polynomials in $\HStab_n(\alpha)$ and in $\NHStab_n^d(\alpha)$.

\begin{theorem} \label{thm:hstab-prod}
    Fix $n,d \in \N$, $u,\alpha \in \R_+^n$, and $p \in \R_+^d[x_1,\ldots,x_n]$ such that $p(\mathbf{1}) = 1$ and $\nabla p(\mathbf{1}) = \alpha$. There exists a $d \times n$ matrix $A$ with entries in $\R_+$ such that the rows sums of $A$ are all equal to 1, the column sums of $A$ are given by $\alpha$, and $p(u) = \prod_{i=1}^d (Au)_i$.
\end{theorem}
\begin{proof}
    We first prove the result in the case that $\alpha$ is rational.
    Denote $\alpha = \left(\frac{k_1}{N}, \ldots, \frac{k_n}{N}\right)$ for some $k_1,\ldots,k_n \in \Z_+$ and $N \in \N$.
    Considering variables $y_{1,1},\ldots,y_{1,k_1},y_{2,1},\ldots,y_{n,k_n}$, we define
    \[
        q(y) := p\left(\frac{y_{1,1} + \cdots + y_{1,k_1}}{k_1}, \ldots, \frac{y_{n,1} + \cdots + y_{n,k_n}}{k_n}\right)^N,
    \]
    so that $q \in \R_+^{dN}[y_{1,1},\ldots,y_{n,k_n}]$.
    Since $q(\mathbf{1}) = 1$ and
    \[
        \partial_{y_{i,j}}q(\mathbf{1}) = \left[\frac{N}{k_i} (\partial_{x_i} p) p^{N-1}\right](\mathbf{1}) = 1,
    \]
    we in fact have $q \in \HStab_{dN}(\mathbf{1})$.
    Letting $v \in \R_+^{dN}$ be such that $v_{i,j} = u_i$ for all $i,j$, note that the roots of $q(\mathbf{1}t - v)$ will consist of $N$ copies of the $d$ roots of $p(\mathbf{1}t - u)$.
    So by Theorem \ref{thm:roots-majorization}, there exists $D \in \Mat_{dN}(\mathbf{1})$ such that
    \[
        Dv = (\lambda_1(u), \ldots, \lambda_1(u), \lambda_2(u), \ldots, \lambda_2(u), \ldots, \lambda_d(u), \ldots, \lambda_d(u))
    \]
    where the roots are all repeated $N$ times.
    Let $D'$ be the $d \times n$ matrix formed by summing the elements of each $N \times k_i$ block of $D$ and dividing by $N$.
    We then have
    \[
        \prod_{i=1}^d (D'u)_i = \prod_{i=1}^d \lambda_i(u) = (-1)^n p(1 \cdot 0 - u) = p(u).
    \]
    Since the row sums of $D'$ are all equal to 1 and the column sums are given by $\frac{k_i}{N}$, we have that $D'$ is the desired $d \times n$ matrix with row sums 1 and column sums given by $\alpha$, which proves the result for $p$.
    
    We now handle the case of irrational $\alpha$.
    First if $\alpha_k = 0$ for some $k$, then $p$ does not depend on $x_k$ and the result follows by induction.
    So we may assume that $\alpha_k > 0$ for all $k \in [n]$.
    By \cite{nuij1969note}, the set of homogeneous real stable polynomials of degree $d$ in $n$ variables is the closure of its interior with respect to the Euclidean topology on coefficients.
    Define the map $M(q) := \nabla q(\mathbf{1})$ on the space of $q \in \R^d[x_1,\ldots,x_n]$ for which $q(\mathbf{1}) = 1$, and note that this map is linear and surjects onto the affine subspace of $\R^n$ consisting of vectors whose entries sum to $d$.
    %
    %
    %
    Choosing a small neighborhood $U$ about $p$, surjectivity and linearity imply $M(U)$ contains a small open ball about $\alpha$ in the range of $M$.
    We can therefore choose a sequence $p_j \in \R_+^d[x_1,\ldots,x_n]$ such that $p_j(\mathbf{1}) = 1$, $\nabla p_j(\mathbf{1}) = \alpha^j$, and $\alpha^j$ is rational for all $j$, and $\alpha^j \to \alpha$ and $p_j \to p$.
    The previous arguments then imply there exists a $d \times n$ matrix $A_j$ with row sums all equal to 1 and columns sums given by $\alpha^j$, such that $p_j(u) = \prod_{i=1}^d (A_j u)_i$ for all $j$.
    By compactness of the set of all $d \times n$ matrices with non-negative entries and row sums all equal to 1, we can assume that $A_j$ is a convergent subsequence with limit $A$.
    Therefore $A$ is a $d \times n$ matrix with non-negative entries such that the row sums of $A$ are all equal to 1, the column sums are given by $\alpha$, and $p(x) = \prod_{i=1}^d (Au)_i$. This completes the proof.
    
\end{proof}

\begin{corollary} \label{cor:hstab-prod}
    Fix $n \in \N$, $u,\alpha \in \R_+^n$, and $p \in \HStab_n(\alpha)$. There exists $f \in \Prod_n(\alpha)$ such that $p(u) = f(u)$.
\end{corollary}

\begin{corollary} \label{cor:nhstab-prod}
    Fix $n,d \in \N$, $u,\alpha \in \R_+^n$, and $p \in \NHStab_n^d(\alpha)$. There exists $f \in \NHProd_n^d(\alpha)$ such that $p(u) = f(u)$.
\end{corollary}
\begin{proof}
    Let $q(x) = x_{n+1}^d \cdot p\left(\frac{x_1}{x_{n+1}}, \ldots, \frac{x_n}{x_{n+1}}\right)$ be the homogenization of $p$, and define $\beta := \nabla q(\mathbf{1})$. So $q \in \R_+^d[x_1,\ldots,x_{n+1}]$ such that $q(\mathbf{1}) = 1$ and $\nabla q(\mathbf{1}) = \beta = (\alpha_1,\ldots,\alpha_n,d-\|\alpha\|_1)$. Define $u_{n+1} := 1$, apply Theorem \ref{thm:hstab-prod} to $q$ and $u$, and dehomogenize to obtain the desired result. 
\end{proof}

\noindent
The perturbation argument at the end of the proof of Theorem \ref{thm:hstab-prod} can also be replaced by a different argument which uses the fact that $r \mapsto \left.\nabla p(r \cdot x)\right|_{x=\mathbf{1}}$ maps the strict positive orthant to the relative interior of the Newton polytope of $p$.
With this, we can choose $p_j \in \R_+^d[x_1,\ldots,x_n]$ such that $p(\mathbf{1}) = 1$ and $\nabla p(\mathbf{1}) = \alpha^j$ for $\alpha^j$ rational by choosing particular values of $r^j$ which limit to $\mathbf{1}$.
As a note, both arguments work for both log-concave and strongly log-concave polynomials (topological properties of the set of strongly log-concave polynomials follow from results of \cite{branden2019lorentzian}).

We now prove the capacity bound for real stable polynomials.
Recall the definition of $L_n(\alpha)$ given in Section \ref{sec:cap-bound-prod}.

\begin{corollary} \label{cor:hstab-cap-bound}
    For $p \in \HStab_n(\alpha)$, we have
    \[
        \cpc_\mathbf{1}(p) \geq L_n(\alpha) \geq \left(1 - \frac{\|1-\alpha\|_1}{2}\right)^n.
    \]
\end{corollary}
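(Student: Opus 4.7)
The plan is to chain together the two main ingredients that have already been established in the paper. The second inequality, $L_n(\alpha) \geq (1 - \|1-\alpha\|_1/2)^n$, is literally the content of Theorem \ref{thm:prod-bound} (combined with the definition of $L_n(\alpha)$ as the minimum capacity over $\Prod_n(\alpha)$); there is nothing additional to prove there. So the only real work is establishing the first inequality, $\cpc_\mathbf{1}(p) \geq L_n(\alpha)$, for an arbitrary $p \in \HStab_n(\alpha)$.

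For that first inequality, the idea is a pointwise reduction via productization. Fix an arbitrary $x$ in the strict positive orthant. By Theorem \ref{thm:hstab-prod}, there exists some $f \in \Prod_n(\alpha)$ (depending on $x$) such that $p(x) = f(x)$. Since $f \in \Prod_n(\alpha)$, the definition of $L_n(\alpha)$ gives
\[
    \frac{f(x)}{x_1 \cdots x_n} \geq \cpc_\mathbf{1}(f) \geq L_n(\alpha),
\]
and therefore $p(x)/(x_1 \cdots x_n) \geq L_n(\alpha)$. Taking the infimum over all $x > 0$ yields $\cpc_\mathbf{1}(p) \geq L_n(\alpha)$, as desired.

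There is no real obstacle here, because all of the heavy lifting has been front-loaded into Theorems \ref{thm:prod-bound} and \ref{thm:hstab-prod}. The only subtle point worth emphasizing is that the matrix $A$ (and hence the product $f$) produced by Theorem \ref{thm:hstab-prod} is allowed to depend on the evaluation point $x$; we do not need a single product that equals $p$ everywhere, only one that matches $p$ at the chosen $x$. This pointwise-productization is exactly enough to transfer the capacity lower bound from the product class to the entire stable class, since capacity is defined as an infimum of a pointwise ratio.
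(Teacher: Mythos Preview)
Your proof is correct and follows essentially the same approach as the paper: cite Theorem~\ref{thm:prod-bound} for the second inequality, and for the first inequality apply the productization Theorem~\ref{thm:hstab-prod} pointwise to bound $p(x)/x^{\mathbf{1}}$ below by $L_n(\alpha)$ for each $x>0$ before taking the infimum. The paper's write-up is slightly more compressed (it passes directly through $\min_{f\in\Prod_n(\alpha)} f(x)/x^{\mathbf{1}}$ rather than the intermediate $\cpc_{\mathbf{1}}(f)$), but the argument is the same.
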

\begin{proof}
    The second inequality is given by Theorem \ref{thm:prod-bound}, so we just need to prove the first inequality.
    For any $x \in \R_+^n$, let $f \in \Prod_n(\alpha)$ be such that $p(x) = f(x)$ according to Corollary \ref{cor:hstab-prod}.
    With this, we have
    \[
        \cpc_\mathbf{1}(p) = \inf_{x > 0} \frac{p(x)}{x^\mathbf{1}} \geq \inf_{x > 0} \min_{f \in \Prod_n(\alpha)} \frac{f(x)}{x^\mathbf{1}} = L_n(\alpha).
    \]
\end{proof}

\noindent
Note that to get this lower bound on capacity, we actually only needed a lower bound for the productization. That is, we only used the fact that for any $p \in \HStab_n(\alpha)$ and any $x \in \R_+^n$, there is some $f \in \Prod_n(\alpha)$ such that $p(x) \geq f(x)$. Of course, having equality in the productization is a nice fact on its own.

Finally, we apply the productization result to obtain the desired capacity bound for non-homogeneous real stable polynomials. Recall the definition of $L_n^{\NHProd}(\alpha;\kappa)$ given in Section \ref{sec:imp-bound-prod}.

\begin{corollary} \label{cor:imp-cap-bound}
    For $p \in \NHStab_n^d(\alpha)$ and $\kappa \in \Z_+^n$ such that $\|\alpha-\kappa\|_1 \leq 1-\epsilon$, we have
    \[
        \cpc_\kappa(p) \geq L_n^{\NHProd}(\alpha;\kappa) \geq \epsilon^n.
    \]
\end{corollary}
\begin{proof}
    The second inequality is given by Theorem \ref{thm:improved_cap_bound}, so we just need to prove the first inequality. For any $x \in \R_+^n$, let $f \in \NHProd_n^d(\alpha)$ be such that $p(x) = f(x)$ according to Corollary \ref{cor:nhstab-prod}. With this, we have
    \[
        \cpc_\kappa(p) = \inf_{x > 0} \frac{p(x)}{x^\kappa} \geq \inf_{x > 0} \min_{d \in \N} \min_{f \in \NHProd_n^d(\alpha)} \frac{f(x)}{x^\kappa} = L_n^{\NHProd}(\alpha;\kappa).
    \]
\end{proof}

\section{Application to Metric TSP} \label{sec:metric_TSP}

In a recent paper \cite{karlin2020slightly}, Karlin, Klein and Oveis Gharan give an improved approximation factor for metric TSP.
Their proof relies on bounds similar to that of Theorem \ref{thm:main} and Theorem \ref{thm:main_nh}.
In this section, we discuss how our bounds relate to their bounds.

First we need to set up a bit of their notation.
Let $\mu$ be a probability distribution on $\{0,1\}^m$, and let the corresponding probability generating function be given by
\[
    p_\mu(z) := \sum_{S \subseteq [m]} \mathbb{P}(1_S) z^S.
\]
Such a distribution $\mu$ is called \emph{strongly Rayleigh (SR)} when the polynomial $p$ is real stable.
Let $X$ be a random variable distributed according to $\mu$.
We then want to investigate random variables $A_1, \ldots, A_n$ which are defined via sets $S_1 \sqcup \cdots \sqcup S_n = [m]$ by
\[
    A_i := \sum_{s \in S_i} X_s.
\]
That is, $A_i$ is the random variable given by summing the entries of $X$ corresponding to $S_i$.
Our Theorem \ref{thm:main} then naively implies the following bound.
Note that for large $d$, this is much weaker than what we achieve below in Corollary \ref{cor:imp_metric_TSP}.

\begin{corollary} \label{cor:metric_TSP}
    Let $\mu$ be a strongly Rayleigh distribution on $\{0,1\}^m$, and let $A_1,\ldots,A_n$ be random variables corresponding to sets $S_1,\ldots,S_n$ as described above.
    Define $\beta_i := \mathbb{E}[A_i]$, and let $d$ be the size of the largest set that $\mu$ assigns a non-zero probability.
    If $\|\beta-\mathbf{1}\|_1 < 1 - \epsilon$, then
    \[
        \mathbb{P}[\forall i : A_i = 1] > e^{-n} \epsilon^d.
    \]
\end{corollary}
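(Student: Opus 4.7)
The plan is to reduce $\mathbb{P}[\forall i : A_i = 1]$ to extracting a specific multilinear coefficient of a real stable polynomial, and then to combine Corollary \ref{cor:hstab-cap-bound} with Gurvits' van der Waerden-type inequality $\partial_{x_1}\cdots\partial_{x_n} p \geq \tfrac{n!}{n^n}\cpc_\mathbf{1}(p)$ stated in Section \ref{sec:cap-bound-prod}. First, I would form the contracted generating polynomial
\[
    q(y_1,\ldots,y_n) := p_\mu(z)\big|_{z_s = y_i \text{ for } s \in S_i}.
\]
Since identifying variables preserves real stability, $q$ is real stable in $n$ variables; a direct computation gives $q(\mathbf{1}) = 1$, $\nabla q(\mathbf{1}) = \mathbb{E}[A]$, and $\deg q = d$. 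Expanding the substitution monomial by monomial yields the key identity
\[
    \mathbb{P}[\forall i : A_i = 1] \;=\; [y_1 \cdots y_n]\,q,
\]
reducing the problem to lower bounding this coefficient. Note also that the hypothesis $\|\mathbf{1}-\mathbb{E}[A]\|_1 < 1-\epsilon$ combined with $\|\mathbb{E}[A]\|_1 \leq d$ forces $d \geq n$, since $\bigl|\|\mathbb{E}[A]\|_1 - n\bigr| \leq \|\mathbf{1}-\mathbb{E}[A]\|_1 < 1$.

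To fit $q$ into the hypotheses of Corollary \ref{cor:hstab-cap-bound}, I would homogenize and then polarize, exactly as in the proof of Theorem \ref{thm:hstab-prod}. Writing $\beta := \mathbb{E}[A]$ for brevity and assuming the main case $d > n$, set
\[
    P(y, t_1, \ldots, t_{d-n}) := \bar t^{\,d}\,q(y/\bar t), \qquad \bar t := \frac{t_1 + \cdots + t_{d-n}}{d-n}.
\]
Both homogenization and substitution by a positive linear form preserve real stability, so $P$ is real stable and homogeneous of degree $d$ in $d$ variables, with $P(\mathbf{1}) = 1$ and $\nabla P(\mathbf{1}) = \alpha = (\beta_1,\ldots,\beta_n,\gamma,\ldots,\gamma)$ where $\gamma = (d - \|\beta\|_1)/(d-n)$; thus $P \in \HStab_d(\alpha)$. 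A short computation gives
\[
    \|\mathbf{1}-\alpha\|_1 \;=\; \|\mathbf{1}-\beta\|_1 + \bigl|\|\beta\|_1 - n\bigr| \;\leq\; 2\|\mathbf{1}-\beta\|_1 \;<\; 2(1-\epsilon),
\]
so Corollary \ref{cor:hstab-cap-bound} yields $\cpc_\mathbf{1}(P) > \epsilon^d$. Applying Gurvits' bound to $P$ as a $d$-variate, $d$-homogeneous real stable polynomial then gives
\[
    [y^\mathbf{1}\,t^\mathbf{1}]\,P \;\geq\; \frac{d!}{d^d}\,\cpc_\mathbf{1}(P) \;>\; \frac{d!}{d^d}\,\epsilon^d.
\]

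To finish, expanding $\bar t^{\,d-|\mu|}$ by the multinomial theorem in each monomial of $P$ gives the explicit identity
\[
    [y^\mathbf{1}\,t^\mathbf{1}]\,P \;=\; [y^\mathbf{1}]\,q \cdot \frac{(d-n)!}{(d-n)^{d-n}},
\]
so rearranging yields
\[
    \mathbb{P}[\forall i : A_i = 1] \;>\; \frac{d!\,(d-n)^{d-n}}{d^d\,(d-n)!}\cdot\epsilon^d.
\]
The main obstacle I anticipate is the remaining factorial estimate: verifying $\frac{d!\,(d-n)^{d-n}}{d^d\,(d-n)!} \geq e^{-n}$ for all integers $d \geq n \geq 1$. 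Rewriting the ratio as $\prod_{j=0}^{n-1}(1-j/d)\cdot(1-n/d)^{d-n}$, one can combine the elementary inequality $(1-n/d)^{d-n} \geq e^{-n}$ (which follows from $\log(1-x) \geq -x/(1-x)$) with a Robbins-type Stirling refinement on $d!/(d-n)!$; alternatively, recognizing the quantity as $(n!/n^n) \cdot \mathbb{P}[\operatorname{Bin}(d, n/d) = n]$ and invoking a local central limit lower bound also works. The edge case $d = n$ degenerates the polarization (no extra variables are introduced), but under the convention $0^0 = 0! = 1$ the factorial factor reduces to $n!/n^n > e^{-n}$ and the argument reduces to a direct application of Corollary \ref{cor:hstab-cap-bound} and Gurvits' bound to $q$ itself (which the hypothesis then places in $\HStab_n(\beta)$), yielding the matching bound.
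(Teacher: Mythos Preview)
Your proposal is correct and follows essentially the same route as the paper: contract the generating polynomial to $n$ variables, homogenize to degree $d$, polarize the homogenizing variable into $d-n$ copies, apply Corollary~\ref{cor:hstab-cap-bound} to the resulting element of $\HStab_d(\alpha)$, and finish with Gurvits' $\frac{d!}{d^d}$ coefficient bound together with the multinomial identity relating the multilinear coefficients. Your gradient and norm computations match the paper's exactly.

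Two places where you are in fact more careful than the paper: you explicitly check that the hypothesis forces $d \geq n$ (needed for the construction to make sense), and you flag the elementary inequality $\frac{d!\,(d-n)^{d-n}}{d^d\,(d-n)!} \geq e^{-n}$, which the paper simply asserts. For that last step, rather than Stirling refinements, the cleanest argument is to telescope: writing $h(k) := \log k! - k\log k$, one has $h(k)-h(k-1) = (k-1)\log(1-1/k) \geq -1$ by the same inequality $\log(1-x) \geq -x/(1-x)$ you already invoked, and summing from $d-n+1$ to $d$ gives $\log g(d) = h(d)-h(d-n) \geq -n$ directly. This handles all $d \geq n$ uniformly, including the edge case $d=n$.
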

\begin{proof}
    To prove this, we translate the above discussion into the language of polynomials.
    Given $\mu$ and $\mathcal{S} = (S_1,\ldots,S_n)$, define
    \[
        p_{\mu,\mathcal{S}}(x_1,\ldots,x_n) := \left.p_\mu(z)\right|_{z_i = x_j \text{ for } i \in S_j}.
    \]
    So $p_{\mu,\mathcal{S}}$ is a polynomial in $n$ variables of degree at most $m$, and the coefficient of $x^\kappa$ is the probability that $(A_1, \ldots, A_n) = \kappa$.
    In particular, we want to bound the coefficient of $x^{\mathbf{1}}$ in $p_{\mu,\mathcal{S}}$.
    Further, we also define $P_{\mu,\mathcal{S}}(x_1,\ldots,x_n,x_{n+1})$ as the homogenization of $p_{\mu,\mathcal{S}}$, and $d \leq m$ is its degree.
    Since setting variables equal and homogenization are operations which preserve real stability when the coefficients are non-negative, we have that $P_{\mu,\mathcal{S}}$ is real stable when $\mu$ is SR.
    We finally define
    \[
        Q_{\mu,\mathcal{S}}(x_1,\ldots,x_d) := P_{\mu,\mathcal{S}}\left(x_1,\ldots,x_n, \frac{x_{n+1}+\cdots+x_d}{d-n}\right)
    \]
    which is then also a real stable homogeneous polynomial, of degree $d$ in $d$ variables.
    
    We now apply our bound to the polynomial $Q_{\mu,\mathcal{S}}$.
    Since $\beta := \nabla p_{\mu,\mathcal{S}}(\mathbf{1})$, we have
    \[
        \alpha := \nabla Q_{\mu,\mathcal{S}}(\mathbf{1}) = \left(\beta_1,\ldots,\beta_n,\frac{d-\|\beta\|_1}{d-n}\right).
    \]
    Note further that $Q_{\mu,\mathcal{S}}(\mathbf{1}) = 1$.
    Now, $\|\beta-\mathbf{1}\|_1 < 1 - \epsilon$ then implies
    \[
    \begin{split}
        \|\alpha-\mathbf{1}\|_1 &= \|\beta-\mathbf{1}\|_1 + (d-n) \left|\frac{d-\|\beta\|_1}{d-n} - 1\right| \\
            &= \|\beta-\mathbf{1}\|_1 + \left|\sum_{i=1}^n 1 - \beta_i\right| \\
            &< 2(1 - \epsilon).
    \end{split}
    \]
    Applying our theorem then gives
    \[
        \inf_{x_1,\ldots,x_d > 0} \frac{Q_{\mu,\mathcal{S}}}{x_1 \cdots x_d} \geq \left(1 - \frac{\|1-\alpha\|_1}{2}\right)^d > \epsilon^d.
    \]
    
    Additionally, it is easy to see that the $x^{\mathbf{1}}$ coefficients of $Q_{\mu,\mathcal{S}}$ and $p_{\mu,\mathcal{S}}$ are related via
    \[
        \langle x^{\mathbf{1}} \rangle p_{\mu,\mathcal{S}} = \frac{(d-n)^{d-n}}{(d-n)!} \cdot \langle x^{\mathbf{1}} \rangle Q_{\mu,\mathcal{S}}.
    \]
    We finally apply Gurvits' original coefficient bound (see Theorem 1.4 of \cite{karlin2020slightly}) to get
    \[
        \langle x^{\mathbf{1}} \rangle Q_{\mu,\mathcal{S}} \geq \frac{d!}{d^d} \cdot \inf_{x_1,\ldots,x_d > 0} \frac{Q_{\mu,\mathcal{S}}}{x_1 \cdots x_d} > \frac{d!}{d^d} \cdot \epsilon^d.
    \]
    Combining everything then gives
    \[
        \mathbb{P}[\forall i : A_i = 1] = \langle x^{\mathbf{1}} \rangle p_{\mu,\mathcal{S}} > \frac{(d-n)^{d-n}}{(d-n)!} \cdot \frac{d!}{d^d} \cdot \epsilon^d \geq e^{-n} \epsilon^d,
    \]
    as desired.
\end{proof}

\noindent
Our Theorem \ref{thm:main_nh} then implies the following bound. This bound is a major improvement over that of Corollary \ref{cor:metric_TSP} above.

\begin{corollary} \label{cor:imp_metric_TSP}
    Let $\mu$ be a strongly Rayleigh distribution on $\{0,1\}^m$, and let $A_1,\ldots,A_n$ be random variables corresponding to sets $S_1,\ldots,S_n$ as described above.
    Let $\beta_i := \mathbb{E}[A_i]$ and fix $\kappa \in \Z_+^n$. If $\|\beta-\kappa\|_1 \leq 1 - \epsilon$, then
    \[
        \mathbb{P}[\forall i : A_i = \kappa_i] \geq e^{-\|\kappa\|_1} \frac{\kappa^\kappa}{\kappa!} \epsilon^n \geq \epsilon^n \prod_{\kappa_i > 0} \frac{1}{e\sqrt{\kappa_i}}.
    \]
\end{corollary}
\begin{proof}
    As in the proof of Corollary \ref{cor:metric_TSP}, given $\mu$ and $\mathcal{S} = (S_1.\ldots,S_n)$ we define
    \[
        p_{\mu,\mathcal{S}}(x_1,\ldots,x_n) := \left.p_\mu(z)\right|_{z_i = x_j \text{ for } i \in S_j}.
    \]
    So $p_{\mu,\mathcal{S}} \in \NHStab_n^m(\beta)$ and the coefficient of $x^\kappa$ is the probability that $(A_1,\ldots,A_n) = \kappa$. We now want to bound the coefficient of $x^\kappa$ in $p_{\mu,\mathcal{S}}$. We first apply Gurvits' coefficient bound (see Corollary 3.6 of \cite{gurvits2009multivariate}) to get
    \[
        \langle x^\kappa \rangle p_{\mu,\mathcal{S}} \geq e^{-\|\kappa\|_1} \frac{\kappa^\kappa}{\kappa!} \cpc_\kappa(p_{\mu,\mathcal{S}}).
    \]
    By applying the bound of Corollary \ref{cor:imp-cap-bound}, we then obtain
    \[
        \langle x^\kappa \rangle p_{\mu,\mathcal{S}} \geq e^{-\|\kappa\|_1} \frac{\kappa^\kappa}{\kappa!} \epsilon^n.
    \]
    This proves the first inequality, and the second inequality follows from Stirling's approximation.
\end{proof}

\begin{corollary} \label{cor:imp_metric_TSP_allones}
    Let $\mu$ be a strongly Rayleigh distribution on $\{0,1\}^m$, and let $A_1,\ldots,A_n$ be random variables corresponding to sets $S_1,\ldots,S_n$ as described above.
    Let $\beta_i := \mathbb{E}[A_i]$. If $\|\beta-\mathbf{1}\|_1 \leq 1 - \epsilon$, then
    \[
        \mathbb{P}[\forall i : A_i = 1] \geq \left(\frac{\epsilon}{e}\right)^n.
    \]
\end{corollary}

\noindent
Our bound in Corollary \ref{cor:metric_TSP} compares favorably to that of \cite{karlin2020slightly} when $d$ is of order at most $2^n$.
However, the main problem with that bound is its dependence on $d$, which could be as large as $m$.
In \cite{karlin2020slightly}, the authors were able to achieve a bound that was independent of $m$ (and $d$), and this was important to their applications.

Corollary \ref{cor:imp_metric_TSP} and Corollary \ref{cor:imp_metric_TSP_allones} utilize more intricate arguments than what was used to prove Corollary \ref{cor:metric_TSP}; see Section \ref{sec:imp-bound-prod}.
The benefit is a vastly superior bound for large $d$, which is also independent of $m$ and $d$ as required by the arguments of \cite{karlin2020slightly}.
Utilizing these improved bounds in the context of \cite{karlin2020slightly} should lead to a significant further improvement of approximation constant for metric TSP.

\section{Algorithms} \label{sec:algos}

\subsection{Scaling for real stable polynomials}






In this section, we describe the scaling algorithm referenced in Theorem \ref{thm:main-scaling-algo} and demonstrate its rate of convergence
To do this, we first introduce some notation.
We define the vector $x^{(0)} := \bm{1}$ as the starting point for algorithm, and for a given fixed homogeneous polynomial $p$ of degree $n$ in $n$ variables we define
\[
    x_i^{(t+1)} := \frac{c^{(t)}}{\gamma_i^{(t)}} \cdot x_i^{(t)} \quad \text{where} \quad \gamma_i^{(t)} := \frac{x_i^{(t)} \partial_{x_i}p(x^{(t)})}{p(x^{(t)})} \quad \text{and} \quad c^{(t)} := \left(\gamma_1^{(t)} \cdots \gamma_n^{(t)}\right)^{\frac{1}{n}}
\]
for all $t \in \N$.
Note that since $x^{(0)} := \bm{1}$, we have that $\prod_i x_i^{(t)} = 1$ for all $t$ by induction.

The algorithm for scaling is then given precisely by iteratively constructing the vectors $x_i^{(t)}$ as defined above.
We prove the claimed rate of convergence for this algorithm in Proposition \ref{prop:section-scaling-algo} below.
First though, we need a lemma.

\begin{lemma}
    There exists a positive constant $C$ such that for large enough $n$ and all $\gamma \in \R_+^n$ for which $\sum_i \gamma_i = n$, we have
    \[
        \log(\gamma_1 \cdots \gamma_n) \leq \max\left\{-C, -\frac{\|1-\gamma\|_2^2}{6}\right\}.
    \]
    When $\|1 - \gamma\|_1 < 2$, we further have
    \[
        \log(\gamma_1 \cdots \gamma_n) \leq -\frac{\|1-\gamma\|_2^2}{6}.
    \]
\end{lemma}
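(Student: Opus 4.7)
The plan is to reduce everything to a one-variable calculus inequality and then split the range of the $\gamma_i$'s into two regimes. Writing $\delta_i := \gamma_i - 1$, the assumption $\sum_i \gamma_i = n$ becomes $\sum_i \delta_i = 0$, we have $\delta_i \geq -1$, and $\log(\gamma_1 \cdots \gamma_n) = \sum_i \log(1+\delta_i)$, so that $\|1-\gamma\|_2^2 = \sum_i \delta_i^2$.

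The key pointwise inequality is
\[
    \log(1+\delta) \;\leq\; \delta - \tfrac{\delta^2}{6} \qquad \text{for all } \delta \in [-1,2].
\]
This I would prove by standard calculus: setting $h(\delta) := \delta - \delta^2/6 - \log(1+\delta)$, one computes $h(0) = 0$ and $h'(\delta) = \delta(2-\delta)/[3(1+\delta)]$, which is $\leq 0$ on $[-1,0]$ and $\geq 0$ on $[0,2]$, so $h \geq 0$ on $[-1,2]$.

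For the second statement, the hypothesis $\|1-\gamma\|_1 < 2$ combined with $\sum_i \delta_i = 0$ forces $\sum_{\delta_i > 0} \delta_i = \tfrac{1}{2}\|\delta\|_1 < 1$, and in particular every $\delta_i$ lies in $[-1,1) \subset [-1,2]$. Summing the pointwise inequality and using $\sum_i \delta_i = 0$ immediately yields
\[
    \sum_i \log(1+\delta_i) \;\leq\; -\tfrac{1}{6}\sum_i \delta_i^2 \;=\; -\tfrac{\|1-\gamma\|_2^2}{6}.
\]

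For the first (unconditional) statement I would split into two cases. If every $\gamma_i \leq 3$, then every $\delta_i \in [-1,2]$ and the same summation as above gives the $-\|1-\gamma\|_2^2/6$ bound. Otherwise, some $\gamma_i$, WLOG $\gamma_n$, exceeds $3$. Apply AM-GM to the remaining coordinates, whose arithmetic mean is $(n-\gamma_n)/(n-1)$, and then apply $\log(1-x) \leq -x$ (valid since $\gamma_n \leq n$ forces the argument to be nonnegative):
\[
    \log\prod_i \gamma_i \;\leq\; \log\gamma_n + (n-1)\log\!\Bigl(1 - \tfrac{\gamma_n-1}{n-1}\Bigr) \;\leq\; \log\gamma_n - (\gamma_n - 1).
\]
The map $a \mapsto \log a - (a-1)$ is decreasing for $a \geq 1$, so for $\gamma_n \geq 3$ the right side is at most $\log 3 - 2 < 0$. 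Thus one may take any positive $C < 2 - \log 3$, and the hypothesis ``$n$ large enough'' simply absorbs the trivial small-$n$ cases (e.g.\ $n \leq 2$, where $\gamma_i \leq 3$ is automatic so only the first case occurs).

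The main obstacle is a mild one: the clean pointwise inequality $\log(1+\delta) \leq \delta - \delta^2/6$ fails once $\delta$ is large enough (the cubic remainder in the Taylor expansion eventually dominates), which is exactly why the first statement cannot be strengthened to the bound of the second, and why the AM-GM detour in Case~B of the first statement is necessary. Once one sees that the ``bad'' regime (some $\gamma_i$ far from $1$) forces $\log\prod_i \gamma_i$ to be uniformly bounded away from $0$ by AM-GM, the max on the right-hand side of the claim is exactly what accommodates this dichotomy.
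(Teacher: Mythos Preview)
Your proof is correct and follows the same two-case strategy as the paper: bound each $\log\gamma_i$ pointwise when all $\gamma_i$ are close to $1$, and use AM--GM on the remaining coordinates when some $\gamma_i$ is large. Your execution is slightly cleaner than the paper's in two places: you replace the paper's Taylor-series pairing argument by the direct calculus check of $\log(1+\delta)\le \delta-\delta^2/6$ on $[-1,2]$ (which also lets you push the threshold to $\gamma_i\le 3$ rather than $\gamma_i<2$), and you use the exact inequality $\log(1-x)\le -x$ in the large-$\gamma$ case where the paper writes an approximation and invokes ``large enough $n$''.
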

\begin{proof}
    We split into two cases.
    In the first case, we assume that $\gamma_i \in (0,2)$ for all $i$.
    (Note that $\|1-\gamma\|_1 < 2$ implies that we are in this case since $\sum_i \gamma_i = n$.)
    So we can use the Taylor expansion of $\log(1-x)$ to get
    \[
        \log(\gamma_1 \cdots \gamma_n) = - \sum_{k=1}^\infty \frac{1}{k} \sum_{i=1}^n (1-\gamma_i)^k = -\sum_{i=1}^n (1-\gamma_i) - \sum_{k=2}^\infty \frac{1}{k} \sum_{i=1}^n (1-\gamma_i)^k = -\sum_{k=2}^\infty \frac{1}{k} \sum_{i=1}^n (1-\gamma_i)^k,
    \]
    since $\sum_i \gamma_i = n$.
    Since $\gamma_i \in (0,2)$, we also have that $|1-\gamma_i|^k \geq |1-\gamma_i|^{k+1}$.
    This implies
    \[
    \begin{split}
        -\sum_{k=2}^\infty \frac{1}{k} \sum_{i=1}^n (1-\gamma_i)^k &\leq -\sum_{k=2}^\infty \frac{(-1)^k}{k} \sum_{i=1}^n |1-\gamma_i|^k \\
            &= -\sum_{i=1}^n \frac{|1-\gamma_i|^2}{6} - \sum_{i=1}^n \frac{|1-\gamma_i|^2-|1-\gamma_i|^3}{3} - \sum_{\substack{k = 4 \\ k,~\text{even}}}^\infty \sum_{i=1}^n \left[\frac{|1-\gamma_i|^k}{k} - \frac{|1-\gamma_i|^{k+1}}{k+1}\right] \\
            &\leq -\sum_{i=1}^n \frac{|1-\gamma_i|^2}{6} - 0 - 0.
    \end{split}
    \]
    This proves the result in the first case.
    
    In the second case, we assume that $\gamma_i \geq 2$ for some $i$, and without loss of generality we can assume $i = n$.
    By assumption, we have that $\sum_{i=1}^{n-1} \gamma_i = n - \gamma_n$, and to follows from concavity of $\log$ that $\log(\gamma_i \cdots \gamma_{n-1})$ is maximized over this domain when $\gamma_1 = \gamma_2 = \cdots = \gamma_{n-1} = \frac{n - \gamma_n}{n-1}$.
    With this we have
    \[
        \log(\gamma_1 \cdots \gamma_n) \leq \log\left[\left(1 - \frac{\gamma_n-1}{n-1}\right)^{n-1}\right] + \log \gamma_n \approx 1 - \gamma_n + \log \gamma_n,
    \]
    for large enough $n$.
    The right-hand side is decreasing in $\gamma_n$ for $\gamma_n \geq 2$, and so
    \[
        1 - \gamma_n + \log \gamma_n \leq -1 + \log 2 = -C.
    \]
    This completes the proof.
\end{proof}

\noindent
With this, we now prove the rate of convergence of the above iterative procedure.

\begin{proposition} \label{prop:section-scaling-algo}
    Let $p$ be a real stable homogeneous polynomial of degree $n$ in $n$ variables such that $p(\bm{1}) = 1$ and $\cpc_{\bm{1}}(p) > 0$.
    The above iterative procedure gives an algorithm for approximating, monotonically from above, the value of $\cpc_{\bm{1}}(p)$.
    %
    The rate of convergence is $O(1/t)$ where $t$ is the number of iterations and $O$ depends on $n$ and the value of $\cpc_{\bm{1}}(p)$.
\end{proposition}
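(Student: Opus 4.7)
The plan is to work in logarithmic coordinates. Write $u_i := \log x_i$ and let
\[
    F(u) := \log p(e^u) - \sum_{i=1}^n u_i.
\]
Since $p(e^u)$ is a positive sum of exponentials, $F$ is convex on $\R^n$, and $\partial_{u_i} F(u) = \gamma_i(u) - 1$ where $\gamma_i(u) = x_i \partial_{x_i} p(x)/p(x)|_{x = e^u}$. Because the iteration preserves $\prod_i x_i^{(t)} = 1$, we have $F(u^{(t)}) = \log p(x^{(t)})$, whose infimum equals $\log \cpc_{\bm{1}}(p)$; write $\Delta_t := F(u^{(t)}) - \log \cpc_{\bm{1}}(p)$ for the gap we wish to bound.

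The key descent identity is obtained by direct expansion: since $\sum_i (u_i^{(t+1)} - u_i^{(t)}) = n \log c^{(t)} - \log \prod_i \gamma_i^{(t)} = 0$ by the choice of $c^{(t)}$, we get
\[
    F(u^{(t+1)}) - F(u^{(t)}) = \log q^{(t)}(y^{(t)}),
\]
where $q^{(t)}(y) := p(x^{(t)} \cdot y)/p(x^{(t)}) \in \HStab_n(\gamma^{(t)})$ (componentwise product) and $y^{(t)}_i := c^{(t)}/\gamma_i^{(t)}$. Real stability of $q^{(t)}$ makes $\log q^{(t)}$ concave on the positive orthant, so the tangent-line inequality at $\bm{1}$ yields $\log q^{(t)}(y^{(t)}) \leq \gamma^{(t)} \cdot (y^{(t)} - \bm{1}) = n(c^{(t)} - 1)$. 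Combined with the preceding lemma applied to $\gamma^{(t)}$ (valid since $\sum_i \gamma_i^{(t)} = n$ by Euler's identity) and the elementary inequality $e^{-a} - 1 \leq -a/2$ for $a$ in a bounded range, this proves monotone decrease of $F$, and in the ``close'' regime $\|1 - \gamma^{(t)}\|_1 < 2$ the sharper quadratic bound
\[
    F(u^{(t+1)}) - F(u^{(t)}) \leq -\|1 - \gamma^{(t)}\|_2^2/12.
\]

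To promote this into an $O(1/t)$ rate, I apply Corollary~\ref{cor:hstab-cap-bound} to $q^{(t)}$ itself. The substitution $z = x^{(t)} \cdot y$, using $\prod_i x_i^{(t)} = 1$, shows $\cpc_{\bm{1}}(q^{(t)}) = \cpc_{\bm{1}}(p)/p(x^{(t)}) = e^{-\Delta_t}$, so in the close regime the main capacity bound gives
\[
    e^{-\Delta_t} \geq \bigl(1 - \|1 - \gamma^{(t)}\|_1/2\bigr)^n.
\]
Taking $n$th roots and invoking $1 - e^{-x} \geq c' x$ on a bounded interval yields the gradient-dominance estimate $\|1 - \gamma^{(t)}\|_1 \gtrsim \Delta_t/n$, hence $\|1 - \gamma^{(t)}\|_2^2 \gtrsim \Delta_t^2/n^3$. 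Combined with the descent bound this gives the scalar recursion $\Delta_t - \Delta_{t+1} \gtrsim \Delta_t^2/n^3$, and telescoping $1/\Delta_{t+1} - 1/\Delta_t \gtrsim 1/n^3$ delivers $\Delta_t = O(n^3/t)$, which is the claimed rate (with constant depending on $n$ and on the starting gap, hence on $\cpc_{\bm{1}}(p)$).

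The main obstacle will be handling the initial ``far'' regime $\|1 - \gamma^{(t)}\|_1 \geq 2$, where both the descent estimate and the main capacity bound take a different form. There, the other branch of the preceding lemma supplies a bound of the shape $F(u^{(t+1)}) - F(u^{(t)}) \leq -c_0$ for an absolute constant $c_0 > 0$, so the number of such iterations is at most $(F(u^{(0)}) - \log \cpc_{\bm{1}}(p))/c_0$---finite but dependent on $p$, and absorbed into the $O$-constant. After at most this many steps the iterate enters the close regime, the quadratic recursion of the previous paragraph kicks in, and the argument concludes.
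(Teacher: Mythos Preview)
Your proof is correct and follows essentially the same route as the paper's: a concavity-based descent inequality, the preceding lemma to convert this into a bound quadratic in $\|1-\gamma^{(t)}\|_2$, the main capacity bound (Corollary~\ref{cor:hstab-cap-bound}) applied to the rescaled polynomial $q^{(t)}$ to obtain gradient dominance, and then the standard $\Delta_{t+1}\le\Delta_t-c\Delta_t^2$ telescoping.

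The only differences are cosmetic. The paper obtains the descent step from Proposition~\ref{prop:other-bounds}(2) (concavity of $p^{1/n}$), which yields directly $\log\bigl(p(x^{(t+1)})/p(x^{(t)})\bigr)\le\log(\gamma_1^{(t)}\cdots\gamma_n^{(t)})$; your tangent bound for $\log q^{(t)}$ gives the slightly weaker $n(c^{(t)}-1)$, costing a harmless factor of two after invoking $e^{-a}-1\le-a/2$. Conversely, your treatment of the gradient-dominance step via $1-e^{-x}\ge c'x$ on a bounded interval is the correct way to extract $\|1-\gamma^{(t)}\|_1\gtrsim\Delta_t/n$ from the capacity bound; the paper's chain there contains a sign slip in the intermediate inequality, though its conclusion is fine once one allows the constant to depend on $\cpc_{\bm 1}(p)$.

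One small imprecision: in your ``far'' regime you invoke the $-C$ branch of the lemma to claim a descent by an absolute constant $c_0$. In fact the lemma only gives $\log(\gamma_1\cdots\gamma_n)\le\max\{-C,-\|1-\gamma\|_2^2/6\}$, and $\|1-\gamma^{(t)}\|_1\ge2$ merely forces $\|1-\gamma^{(t)}\|_2^2\ge 4/n$, so the guaranteed descent is of order $1/n$, not absolute (this is exactly what the paper uses). Since the proposition allows the $O$-constant to depend on $n$, this changes nothing in the conclusion.
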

\begin{proof}
    %
    Applying Proposition \ref{prop:other-bounds} gives
    \[
        \frac{p(x^{(t+1)})}{p(x^{(t)})} \leq \left(\frac{1}{n} \sum_{i=1}^n \gamma_i^{(t)} \cdot \frac{x_i^{(t+1)}}{x_i^{(t)}}\right)^n = \left(\frac{1}{n} \sum_{i=1}^n \gamma_i^{(t)} \cdot \frac{c^{(t)}}{\gamma_i^{(t)}}\right)^n = \gamma_1^{(t)} \cdots \gamma_n^{(t)}.
    \]
    By definition of $\gamma_i^{(t)}$, Euler's identity for homogeneous functions gives
    \[
        \sum_{i=1}^n \gamma_i^{(t)} = \sum_{i=1}^n \frac{x_i^{(t)} \partial_{x_i}p(x^{(t)})}{p(x^{(t)})} = \frac{n \cdot p(x^{(t)})}{p(x^{(t)})} = n.
    \]
    %
    By the previous lemma, this implies
    \[
        \log\left(p(x^{(t+1)})\right) - \log\left(p(x^{(t)})\right) \leq \log\left(\gamma_1^{(t)} \cdots \gamma_n^{(t)}\right) \leq \max\left\{-C, -\frac{\|1-\gamma^{(t)}\|_2^2}{6}\right\}
    \]
    for $n$ large enough where $C$ is the positive constant from the previous lemma.
    Defining $\epsilon_t := \log p(x^{(t)}) - \log \cpc_{\bm{1}}(p)$, this implies
    \[
        \epsilon_{t+1} \leq \epsilon_t + \max\left\{-C, -\frac{\|1-\gamma^{(t)}\|_2^2}{6}\right\}.
    \]
    We now set out to describe the rate at which the error $\epsilon_t$ goes to 0.
    
    First suppose that $\|1 - \gamma^{(0)}\|_1 \geq 2$.
    Then $\|1 - \gamma^{(t)}\|_2^2 \geq \frac{1}{n} \|1 - \gamma^{(t)}\|_1^2 \geq \frac{4}{n}$, which implies
    \[
        \epsilon_{t+1} \leq \epsilon_t + \max\left\{-C, -\frac{\|1-\gamma^{(t)}\|_2^2}{6}\right\} \leq \epsilon_t + \max\left\{-C, -\frac{2}{3n}\right\} \leq \epsilon_t - \frac{2}{3n}.
    \]
    Since $\log\cpc_{\bm{1}}$ is a finite constant, this implies that either the error is 0 or $\|1 - \gamma^{(t)}\|_1 < 2$ after a number of iterations which is linear in $n$.
    
    Therefore, we from now on assume that $\|1 - \gamma^{(0)}\|_1 < 2$.
    This allows us to use our main bound from Theorem \ref{thm:main}, which implies
    \[
        \epsilon_t = \log p(x^{(t)}) - \log \cpc_{\bm{1}}(p) \leq 0 - n \cdot \log \left(1 - \frac{1}{2} \|1-\gamma^{(t)}\|_1\right).
    \]
    The inequality $\log p(x^{(t)}) \leq 0$ follows inductively from the fact that our starting point above was $x = \bm{1}$ (for which this inequality holds) and at every step $\epsilon_t$ is decreasing.
    Using the Taylor expansion for $\log$ and the inequality between the 1-norm and the 2-norm, we then have
    \[
        \epsilon_t \leq -n \cdot \log \left(1 - \frac{1}{2} \|1-\gamma^{(t)}\|_1\right) \leq -n \cdot -\frac{1}{2} \|1-\gamma^{(t)}\|_1 \leq \frac{n\sqrt{n}}{2} \|1-\gamma^{(t)}\|_2.
    \]
    Rearranging this gives
    \[
        -\frac{\|1-\gamma^{(t)}\|_2^2}{6} \leq -\frac{2}{3n^3} \cdot \epsilon_t^2.
    \]
    Using the previous lemma in the case that $\|\bm{1}-\gamma\|_1 < 2$, we then have
    \[
        \epsilon_{t+1} \leq \epsilon_t - \frac{\|\bm{1}-\gamma^{(t)}\|_2^2}{6} \leq \epsilon_t - \frac{2}{3n^3} \cdot \epsilon_t^2.
    \]
    Denoting $\delta_t := \frac{2}{3n^3} \cdot \epsilon_t$, this implies
    \[
        \delta_{t+1} \leq \delta_t - \delta_t^2.
    \]
    We claim that this implies
    \[
        \delta_t \leq \frac{1}{\delta_0^{-1} + t}.
    \]
    To see this, first note that it is trivial for the base case $t = 0$.
    Then by induction, we have
    \[
        \delta_{t+1} \leq \delta_t - \delta_t^2 \leq \frac{1}{\delta_0^{-1} + t} - \frac{1}{(\delta_0^{-1} + t)^2} = \frac{\delta_0^{-1} + t - 1}{\delta_0^{-1} + t} \cdot \frac{1}{\delta_0^{-1} + t} \leq \frac{\delta_0^{-1} + t}{\delta_0^{-1} + t + 1} \cdot \frac{1}{\delta_0^{-1} + t} = \frac{1}{\delta_0^{-1} + t + 1}.
    \]
    This finally implies
    \[
        \epsilon_t \leq \frac{3n^3}{2} \cdot \frac{1}{\frac{3n^3}{2\epsilon_0} + t} = \frac{\epsilon_0}{1 + \frac{2\epsilon_0}{3n^3} \cdot t},
    \]
    which is what was to be proven.
    
\end{proof}


\begin{remark} \label{rem:bregman}
    There is another iterative algorithm for scaling a polynomial $p_0$ to have marginals equal to $\bm{1}$, via Bregman projections in terms of standard Kullback-Leibler divergence between the vectors of coefficients of polynomials (see \cite{bregman1967relaxation}).
    This algorithm is given as follows.
    First, project $p_0$ onto the space of polynomial for which $p(\bm{1}) = 1$ and $\partial_{x_1} p(\bm{1}) = 1$ to get $p_1$.
    Next, project $p_1$ onto the space of polynomial for which $p(\bm{1}) = 1$ and $\partial_{x_2} p(\bm{1}) = 1$ to get $p_2$.
    Continue this process for all partial derivatives $\partial_{x_1},\partial_{x_2},\partial_{x_3},\ldots$, and once all variables have been used, start the process over again at $\partial_{x_1}$.
    If $\cpc_{\bm{1}}(p_0) > 0$, then this process will converge.
    This iterative algorithm is not the same as ours given above.
    In fact, this Bregman projections approach works for general polynomials with non-negative coefficients, while our approach requires restrictions on the polynomial (log-concavity, at least).
    The benefit of our algorithm is that it is simpler to implement.
\end{remark}

\subsection{An algorithm for computing $L_n(\alpha)$} \label{sec:algo-min-capacity}

In this section, we give an algorithm for computing the minimum capacity value $L_n(\alpha)$ for any fixed $\alpha > 0$.
Recall
\[
    L_n(\alpha) = \min_{p \in \Prod_n(\alpha)} \cpc_{\mathbf{1}}(p) = \min_{p \in \HStab_n(\alpha)} \cpc_{\mathbf{1}}(p),
\]
where the second equality follows from the results of the previous section.
To compute this minimum, first note that
\[
    f(M) := \log\cpc_{\mathbf{1}}\left(\prod_{i=1}^n (Mx)_i\right) = \inf_{x > 0} \left(\sum_{i=1}^n \log(Mx)_i - \alpha_i \log x_i\right)
\]
is concave as a function of $M \in \Mat_n(\alpha)$.
This follows from the fact that $\sum_{i=1}^n \log(Mx)_i - \alpha_i \log x_i$ is a concave function in $M$ for all $x$, and concavity is preserved under taking $\inf$.
Therefore to compute
\[
    L_n(\alpha) = \min_{p \in \Prod_n(\alpha)} \cpc_{\mathbf{1}}(p) = \min_{M \in \Mat_n(\alpha)} e^{f(M)},
\]
we just need to minimize $f(M)$ over the extreme points of $\Mat_n(\alpha)$.
The support (non-zero entries) of the extreme points of $\Mat_n(\alpha)$ then correspond to bipartite forests on $2n$ vertices.
(To see this, note that if the support of $M$ contains a cycle, then one can perturb the corresponding entries by $\pm \epsilon$ with alternating sign to show that $M$ is not extreme.)
Now let $M$ be an extreme point of $\Mat_n(\alpha)$ with support $F$ which is a bipartite forest on $2n$ vertices.
Then there is some row or column of $M$ which contains exactly one non-zero element, corresponding to an edge connected to a leaf of $F$.
The appropriate row or column sum then forces a specific value for this entry of $M$.
Remove that edge from $F$, and remove the corresponding row or column from $M$.
Since $F$ is still a forest after this, we can recursively apply the above argument.
This implies the entries of $M$ are actually determined by $F$.
So for every bipartite forest $F$ on $2n$ vertices, there is at most one $M$ with support $F$.
The above argument also describes the algorithm for constructing the matrix $M$ from $F$.
(If at any point a row or column sum is violated, it means there is no such $M$ with support $F$.)
These observations then yield an algorithm for computing $L_n(\alpha)$, given as follows.

\begin{enumerate}
    \item Iterate over all bipartite forests $F$ on $2n$ vertices.
    \item Construct the matrix $M \in \Mat_n(\alpha)$ associated to $F$, or skip this $F$ if no such $M$ exists.
    \item Compute $f(M)$, keeping track of the minimum value.
\end{enumerate}

\noindent
This algorithm has running time on the order the number of spanning forests of the complete bipartite graph on $2n$ vertices, which is at least $n^{2(n-1)}$.

\section{More General Polynomial Classes} \label{sec:other-bounds}

\subsection{Bounds for other polynomial classes}

We have not yet been able to prove the same capacity bound for log-concave or strongly log-concave polynomials.
In this section, we discuss a number of results and observations which suggest that such a bound should be possible.
The main thing we are missing is a productization result for strongly log-concave polynomials.
For real stable polynomials, we were able to explicitly construct the matrices which gave rise to the productization.
The first lemma here shows that a productization result already follows from a bound by the min and max product polynomials.

\begin{lemma} \label{lem:squeeze}
    Fix $n \in \N$, $x,\alpha \in \R_+^n$, and $p \in \R_+^n[x_1,\ldots,x_n]$ such that $p(\mathbf{1}) = 1$ and $\nabla p(\mathbf{1}) = \alpha$. Suppose further that
    \[
        \max_{f \in \Prod_n(\alpha)} f(x) \geq p(x) \geq \min_{f \in \Prod_n(\alpha)} f(x).
    \]
    Then there exists $f \in \Prod_n(\alpha)$ such that $p(x) = f(x)$.
\end{lemma}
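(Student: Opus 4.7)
The plan is to invoke the intermediate value theorem after noting that the set $\Prod_n(\alpha)$ is continuously parametrized by the compact, path-connected polytope $\Mat_n(\alpha)$.

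First I would verify that $\Mat_n(\alpha)$ is nonempty, compact, and path-connected. By Euler's identity applied to the $n$-homogeneous polynomial $p$ with $p(\mathbf{1}) = 1$ and $\nabla p(\mathbf{1}) = \alpha$, we have $\sum_i \alpha_i = n$, so the matrix $M_{ij} := \alpha_j / n$ is a concrete element of $\Mat_n(\alpha)$. The set is cut out of $\R_+^{n \times n}$ by the affine row- and column-sum constraints, hence is a bounded convex polytope, which in particular is compact and path-connected.

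Next I would observe that the evaluation map $\Phi_x : \Mat_n(\alpha) \to \R_+$ defined by $\Phi_x(A) := \prod_{i=1}^n (Ax)_i$ is continuous, being polynomial in the entries of $A$. By compactness its extrema are attained, and they are precisely the quantities $\max_{f \in \Prod_n(\alpha)} f(x)$ and $\min_{f \in \Prod_n(\alpha)} f(x)$ that appear in the hypothesis. Applying the intermediate value theorem to the continuous real-valued function $\Phi_x$ on the path-connected space $\Mat_n(\alpha)$, together with the hypothesis that $p(x)$ lies between these extremes, yields some $A \in \Mat_n(\alpha)$ with $\Phi_x(A) = p(x)$. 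Taking $f(y) := \prod_{i=1}^n (Ay)_i$ gives the desired element of $\Prod_n(\alpha)$.

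There is no substantive obstacle: the lemma is a clean packaging of IVT on the continuously parametrized space $\Prod_n(\alpha)$. The only point worth checking is the non-emptiness of $\Mat_n(\alpha)$, which falls out of Euler's identity as above.
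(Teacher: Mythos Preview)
Your argument is correct and essentially identical to the paper's: both observe that $\Mat_n(\alpha)$ is a (nonempty) compact convex polytope, so the continuous map $A \mapsto \prod_i (Ax)_i$ has a closed interval as its image, and the hypothesis places $p(x)$ in that interval. Your explicit verification of non-emptiness via Euler's identity is a nice touch that the paper leaves implicit.
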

\begin{proof}
    Define the map $P: \Mat_n(\alpha) \to \R_+$ via
    \[
        P(A) := \prod_{i=1}^n (Ax)_i.
    \]
    Since $\Mat_n(\alpha)$ is a closed convex polytope, its image under $P$ is a closed interval. The result follows.
\end{proof}

\noindent
Further, we actually only need the productization lower bound to obtain capacity lower bounds.
That is, to prove a capacity bound for (strongly) log-concave polynomials, we just need to prove for any $x$ and $\alpha$ that
\[
    \min_{p \in \LC_n(\alpha)} p(x) \geq \min_{f \in \Prod_n(\alpha)} f(x).
\]
That said, we now state various relations between these upper and lower bounds for the various classes of polynomials.

\begin{proposition} \label{prop:other-bounds}
    Fix $n \in \N$ and $y \in \R_+^n$. Given $p \in \R_+^n[x_1,\ldots,x_m]$, define $\gamma = \left.\nabla \frac{p(y \cdot x)}{p(y)}\right|_{x=\mathbf{1}}$. For any $x \in \R_+^m$, we have
    \begin{enumerate}
        \item $\displaystyle \frac{p(x)}{p(y)} \leq \frac{1}{n} \sum_{i=1}^m \gamma_i \left(\frac{x_i}{y_i}\right)^n$ for all $p \in \R_+^n[x_1,\ldots,x_m]$.
        \item $\displaystyle \frac{p(x)}{p(y)} \leq \left(\frac{1}{n} \sum_{i=1}^m \gamma_i \cdot \frac{x_i}{y_i}\right)^n$ for all $p \in \R_+^n[x_1,\ldots,x_m]$ which is log-concave in $\R_+^m$.
        \item $\displaystyle \frac{p(x)}{p(y)} \geq \left(\frac{x_1}{y_1}\right)^{\gamma_1} \cdots \left(\frac{x_m}{y_m}\right)^{\gamma_m}$ for all $p \in \R_+^n[x_1,\ldots,x_m]$.
    \end{enumerate}
\end{proposition}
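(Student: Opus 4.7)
The three inequalities are all invariant under the rescaling $z_i := x_i/y_i$: this substitution sends $p(x)/p(y)$ to the value at $z$ of $\tilde p(z) := p(y_1z_1,\ldots,y_mz_m)/p(y)$, which is an $n$-homogeneous polynomial in $\R_+^n[z_1,\ldots,z_m]$ with $\tilde p(\mathbf{1}) = 1$ and $\nabla\tilde p(\mathbf{1}) = \gamma$. Log-concavity on $\R_+^m$ is preserved, and the three right-hand sides are exactly the corresponding expressions for $\tilde p$ at $z$. So without loss of generality I may assume $y = \mathbf{1}$, so that $p(\mathbf{1}) = 1$ and $\nabla p(\mathbf{1}) = \gamma$. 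Writing $p(x) = \sum_\mu c_\mu x^\mu$ with $c_\mu \geq 0$, this gives $\sum_\mu c_\mu = 1$ and $\sum_\mu c_\mu \mu_i = \gamma_i$; in particular $\sum_i \gamma_i = n$ by Euler's identity.

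For part (1), I would apply weighted AM-GM monomial by monomial. Since $\sum_i \mu_i = n$, the weights $\mu_i/n$ sum to $1$, so $x^\mu = \prod_i (x_i^n)^{\mu_i/n} \leq \sum_i (\mu_i/n)\, x_i^n$. Multiplying by $c_\mu$, summing over $\mu$, and swapping the order of summation collapses $\sum_\mu c_\mu \mu_i$ to $\gamma_i$ and yields the claimed bound.

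For part (3), I would instead apply weighted AM-GM across the monomials with weights $c_\mu$: since $\sum_\mu c_\mu = 1$,
\[
    p(x) = \sum_\mu c_\mu\, x^\mu \;\geq\; \prod_\mu (x^\mu)^{c_\mu} \;=\; x^{\sum_\mu c_\mu \mu} \;=\; \prod_i x_i^{\gamma_i}.
\]

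For part (2), the key input is the standard fact that if $p$ is positive, $n$-homogeneous, and log-concave on $\R_+^m$, then $p^{1/n}$ is concave on $\R_+^m$: $g := p^{1/n}$ is positive and $1$-homogeneous, so for any $x,y$ with $g(x),g(y) > 0$ one factors $\theta x + (1-\theta)y = s\,u + t\,v$ with $g(u) = g(v) = 1$, $s = \theta g(x)$, $t = (1-\theta) g(y)$; then $1$-homogeneity and log-concavity give $g(su+tv) = (s+t)\,g(\tfrac{s}{s+t}u+\tfrac{t}{s+t}v) \geq s+t$, which is concavity. Granting this, the tangent-plane inequality for $p^{1/n}$ at $\mathbf{1}$ (where $\nabla p^{1/n}(\mathbf{1}) = \gamma/n$) gives
\[
    p(x)^{1/n} \;\leq\; 1 + \tfrac{1}{n}\textstyle\sum_i \gamma_i (x_i - 1) \;=\; \tfrac{1}{n}\textstyle\sum_i \gamma_i x_i,
\]
using $\sum_i \gamma_i = n$. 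Raising to the $n$-th power completes (2).

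The only step beyond a one-line AM-GM is the passage from log-concavity of $p$ to concavity of $p^{1/n}$ needed in (2), and even that reduces to a short homogeneity argument; I do not anticipate any real obstacle.
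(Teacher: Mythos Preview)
Your proof is correct and follows essentially the same approach as the paper: the same reduction to $y=\mathbf{1}$, the same monomial-wise AM-GM for (1), and the same tangent-plane argument for (2) via concavity of $p^{1/n}$. For (3) the paper cites the capacity identity $\cpc_\gamma(p)=1$ (Proposition~\ref{prop:ds-cap}), but your direct weighted AM-GM across monomials is precisely the content of that identity, so there is no real difference.
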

\begin{proof}
    Since $\frac{p(x)}{p(y)} = \frac{p(y \cdot x)}{p(y)}$ and variable scaling preserves the various classes of polynomials, we only need to prove the bounds for $y = \mathbf{1}$ and $p(\mathbf{1}) = 1$.
    
    $(1)$. By AM-GM, we have $x^\mu \leq \sum_{i=1}^m \frac{\mu_i}{n} x_i^n$ for any $\mu \in \Z_+^n$ and any $x \in \R_+^n$. Therefore
    \[
        p(x) = \sum_\mu p_\mu x^\mu \leq \sum_\mu p_\mu \sum_{i=1}^m \frac{\mu_i x_i^n}{n} = \sum_{i=1}^m \frac{x_i^n}{n} \sum_\mu \mu_i p_\mu = \sum_{i=1}^m \frac{x_i^n}{n} \cdot \gamma_i.
    \]
    
    $(2)$. By log-concavity, $p^{\frac{1}{n}}$ is concave in the positive orthant. Further, we have
    \[
        p^{\frac{1}{n}}(\mathbf{1}) = \frac{\sum_{i=1}^m \gamma_i \cdot 1}{n} \qquad \text{and} \qquad \nabla|_{x=\mathbf{1}} (p^{\frac{1}{n}}) = \left(\frac{\gamma_1}{n}, \ldots, \frac{\gamma_m}{n}\right) = \nabla|_{x=\mathbf{1}} \left(\frac{\sum_{i=1}^n \gamma_i x_i}{n}\right).
    \]
    Since $\frac{1}{n} \sum_{i=1}^n \gamma_i x_i$ is linear and $p^{\frac{1}{n}}$ is concave, this immediately implies $p(x) \leq \left(\frac{1}{n}\sum_{i=1}^n \gamma_i x_i\right)^n$.
    
    $(3)$. Proposition \ref{prop:ds-cap} implies $\frac{p(x)}{x^\gamma} \geq 1$, which gives the bound.
\end{proof}

\noindent
An immediate corollary of $(2)$ in the above proposition is that the maximizing polynomial for log-concave polynomials is a product polynomial, stated formally as follows.

\begin{corollary} \label{cor:prod-lc-max}
    For $n \in \N$ and $x,\alpha \in \R_+^n$, we have
    \[
        \max_{p \in \Prod_n(\alpha)} p(x) = \left(\frac{\sum_{i=1}^n \alpha_i x_i}{n}\right)^n = \max_{p \in \LC_n(\alpha)} p(x).
    \]
\end{corollary}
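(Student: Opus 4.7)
The plan is to prove this by verifying two inequalities and producing an explicit extremizer. First I would note that since $\Prod_n(\alpha) \subseteq \HStab_n(\alpha) \subseteq \SLC_n(\alpha) \subseteq \LC_n(\alpha)$ by the chain of inclusions in the relevant definition, it automatically holds that
\[
    \max_{p \in \Prod_n(\alpha)} p(x) \leq \max_{p \in \LC_n(\alpha)} p(x).
\]
So the corollary reduces to producing a matching upper bound on the right-hand maximum and producing an element of $\Prod_n(\alpha)$ that achieves it.

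For the upper bound, I would directly invoke part $(2)$ of Proposition \ref{prop:other-bounds} with $y = \mathbf{1}$. Any $p \in \LC_n(\alpha)$ is by definition a log-concave polynomial in $\R_+^n[x_1,\ldots,x_n]$ with $p(\mathbf{1}) = 1$ and $\nabla p(\mathbf{1}) = \alpha$, so that result gives
\[
    p(x) \leq \left(\frac{1}{n} \sum_{i=1}^n \alpha_i x_i\right)^n
\]
for every $x \in \R_+^n$.

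For the achievability, I would exhibit the rank-one doubly normalized matrix $A$ defined by $A_{ij} := \alpha_j / n$. Since $p \in \HStab_n(\alpha)$ forces $\sum_i \alpha_i = n$ by Euler's identity (so that the entries of $\Mat_n(\alpha)$ are consistent), the row sums of $A$ are $\sum_j \alpha_j / n = 1$ and the column sums are $n \cdot \alpha_j / n = \alpha_j$, giving $A \in \Mat_n(\alpha)$. The corresponding product polynomial in $\Prod_n(\alpha)$ is
\[
    f(x) = \prod_{i=1}^n (Ax)_i = \left(\frac{1}{n} \sum_{j=1}^n \alpha_j x_j\right)^n,
\]
which matches the upper bound exactly.

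Combining these, the maximum over $\LC_n(\alpha)$ is bounded above by the target value, the maximum over $\Prod_n(\alpha)$ achieves it, and the inclusion forces both maxima to equal $\bigl(\tfrac{1}{n}\sum_i \alpha_i x_i\bigr)^n$. There is no real obstacle here, since everything substantive is already packaged inside Proposition \ref{prop:other-bounds}; the only mild subtlety is checking the normalization $\sum_i \alpha_i = n$ so that $\Mat_n(\alpha)$ is nonempty and the flat matrix $A$ is actually well defined.
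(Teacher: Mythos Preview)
Your proposal is correct and follows essentially the same approach as the paper, which simply declares the result an ``immediate corollary of $(2)$ in the above proposition'' without spelling out details. You have correctly identified the two ingredients the paper has in mind: the upper bound from Proposition~\ref{prop:other-bounds}(2) applied at $y=\mathbf{1}$, and the explicit maximizer $\bigl(\tfrac{1}{n}\sum_i \alpha_i x_i\bigr)^n \in \Prod_n(\alpha)$ coming from the rank-one matrix $A_{ij}=\alpha_j/n$.
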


\noindent
Note also that $(1)$ in the above proposition implies there is no such relationship between product polynomials and polynomials $p \in \R_+^n[x_1,\ldots,x_n]$ in general.

Of course what we really care about here is the lower bound, which in general is more difficult.
In particular, it seems unlikely that the minimums will have explicit formulas like the maximums did in Corollary \ref{cor:prod-lc-max}.
One thing we can say in the direction of a lower bound follows from $(3)$ in the above proposition, stated as follows.

\begin{corollary}
    For $n \in \N$, $x \in \R_+^n$, and non-negative integer vector $\alpha \in \Z_+^n$, we have
    \[
        \min_{p \in \Prod_n(\alpha)} p(x) = \prod_{i=1}^n x_i^{\alpha_i} = \min_{p \in \R_+^n[x_1,\ldots,x_n]} p(x).
    \]
\end{corollary}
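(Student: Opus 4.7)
The strategy is to exhibit the candidate monomial $m(x) := \prod_{i=1}^n x_i^{\alpha_i}$ as an element of $\Prod_n(\alpha)$, and then appeal to part $(3)$ of Proposition \ref{prop:other-bounds} to show that no normalized polynomial can dip below $m(x)$. The integrality of $\alpha$ is essential here: it is what makes $m$ a bona fide polynomial (in fact, a product of linear forms) rather than just a formal expression.

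First, I would construct an explicit $A \in \Mat_n(\alpha)$ realizing $m$. Since $\alpha \in \Z_+^n$ with $\|\alpha\|_1 = n$ (which is built into the definition of $\Mat_n(\alpha)$), I can form the $n \times n$ matrix $A$ by stacking, for each $i \in [n]$, exactly $\alpha_i$ copies of the standard basis row $e_i^{\top}$. Then every row sum equals $1$, the $i$-th column sum equals $\alpha_i$, and
\[
    \prod_{i=1}^n (Ax)_i \;=\; \prod_{i=1}^n x_i^{\alpha_i} \;=\; m(x).
\]
This shows $m \in \Prod_n(\alpha)$, so both minima in the corollary are at most $m(x)$.

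Next, for the matching lower bound, I would invoke Proposition \ref{prop:other-bounds}(3) with $y = \mathbf{1}$. For any $p \in \R_+^n[x_1,\ldots,x_n]$ implicitly normalized by $p(\mathbf{1}) = 1$ and $\nabla p(\mathbf{1}) = \alpha$, that proposition yields
\[
    p(x) \;\geq\; \prod_{i=1}^n x_i^{\alpha_i} \;=\; m(x).
\]
Since $\Prod_n(\alpha)$ is a subset of this larger normalized class, the same lower bound holds for both minima in the corollary. Combining the two directions gives equality throughout.

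There is no substantive obstacle: the proof is a one-line construction paired with a prior inequality. The only point worth flagging is that without the assumption $\alpha \in \Z_+^n$ the construction of $A$ in the first step breaks down (non-integer $\alpha_i$ cannot correspond to a whole number of copies of $e_i^{\top}$), and correspondingly $\prod_i x_i^{\alpha_i}$ leaves the world of polynomials, so the statement of the corollary itself fails to parse.
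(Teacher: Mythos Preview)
Your proof is correct and matches the paper's approach exactly: the paper's one-line proof simply states that it ``follows from the fact that $\prod_{i=1}^n x_i^{\alpha_i}$ is a product polynomial when $\alpha \in \Z_+^n$,'' and you have spelled out precisely this observation (via the explicit matrix $A$) together with the implicit appeal to Proposition~\ref{prop:other-bounds}(3) for the lower bound.
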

\begin{proof}
    Follows from the fact that $\prod_{i=1}^n x_i^{\alpha_i}$ is a product polynomial when $\alpha \in \Z_+^n$.
\end{proof}

\subsection{The general minimization problem} \label{sec:general-polynomials}

In general, the minimization problem for general polynomials $p \in \R_+^n[x_1,\ldots,x_m]$ for which $p(\mathbf{1}) = 1$ and $\nabla p(\mathbf{1}) = \alpha$ can be written as the linear program
\[
    \min_{\substack{p_\mu \geq 0 \\ \sum_\mu \mu \cdot p_\mu = \alpha}} \quad \sum_\mu p_\mu t^\mu
\]
where $t > 0$ is fixed.
(Note that homogeneity makes the $p(\mathbf{1}) = 1$ condition equivalent to $\sum_i \alpha_i = n$.)
One thing we can do is characterize the support of the minimizers of the above linear program.

\begin{proposition}
    For $t,\alpha \in \R_+^m$, a support set $S$ is the support of a polynomial $p \in \R_+^n[x_1,\ldots,x_m]$ which minimizes the above linear program if and only if the following hold.
    \begin{enumerate}
        \item $\alpha$ is in the convex hull of $S$.
        \item There exists $\beta \in \R^m$ such that
        \[
            (t_1x_1+\cdots+t_mx_m)^n - (\beta_1x_1+\cdots+\beta_mx_m) (x_1+\cdots+x_m)^{n-1}
        \]
        is supported outside of $S$ and has non-negative coefficients.
    \end{enumerate}
\end{proposition}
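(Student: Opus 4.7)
The plan is to recognize this minimization as a plain linear program in the unknowns $(p_\mu)_{|\mu|=n}$ and then invoke LP duality. The primal LP reads: minimize $\sum_\mu p_\mu t^\mu$ subject to $\sum_\mu \mu_i p_\mu = \alpha_i$ for $i \in [m]$ and $p_\mu \geq 0$. The dual LP then reads: maximize $\sum_i \beta_i \alpha_i$ over $\beta \in \R^m$ subject to $\sum_i \beta_i \mu_i \leq t^\mu$ for every exponent $\mu \in \Z_+^m$ with $|\mu| = n$.

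The one real computation is to translate condition (2) into this dual language. Using the multinomial theorem together with the identity $\binom{n-1}{\mu - e_i} = \tfrac{\mu_i}{n}\binom{n}{\mu}$, the coefficient of $x^\mu$ in $(\beta_1 x_1 + \cdots + \beta_m x_m)(x_1 + \cdots + x_m)^{n-1}$ works out to $\binom{n}{\mu}\cdot\tfrac{1}{n}\sum_i \beta_i \mu_i$. Hence the coefficient of $x^\mu$ in the polynomial displayed in (2) equals $\binom{n}{\mu}\bigl(t^\mu - \tfrac{1}{n}\sum_i \beta_i \mu_i\bigr)$, so after rescaling $\beta \mapsto n\beta$ condition (2) becomes precisely: there exists a dual-feasible $\beta$ that is tight at every $\mu \in S$ and has non-negative slack off of $S$.

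Given this dictionary, both directions follow from standard LP theory. For the forward direction, if $S = \supp(p^*)$ for some primal minimizer $p^*$, then Euler's identity together with $\sum_i \alpha_i = n$ gives $\sum_{\mu \in S} p^*_\mu = 1$, so $\alpha = \sum_{\mu \in S} \mu\, p^*_\mu$ realizes $\alpha$ as a convex combination supported on $S$, establishing (1); strong LP duality then produces a dual-optimal $\beta^*$, and complementary slackness (tight at every $\mu$ with $p^*_\mu > 0$) yields (2). For the backward direction, (1) supplies a primal-feasible $p$ supported in $S$, and (2) supplies a dual-feasible $\beta$ tight on all of $S \supseteq \supp(p)$; complementary slackness then immediately certifies $p$ as an optimal solution.

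The main obstacle I expect is ensuring that the minimizer produced in the backward direction has support \emph{equal to} $S$ rather than merely contained in $S$. The cleanest resolution is to interpret (1) in the natural stronger sense that $\alpha$ lies in the relative interior of $\mathrm{conv}(S)$, in which case one can find a primal-feasible $p$ with $p_\mu > 0$ for every $\mu \in S$; this is also what the forward direction delivers, since otherwise $S$ could be shrunk without changing $\alpha$ or the objective value. Modulo this relative-interior refinement, the argument is just LP duality together with the coefficient-matching computation above.
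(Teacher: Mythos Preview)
Your proposal is correct and essentially the same as the paper's proof: both set up the LP dual, identify $\beta$ (the paper's $n y^\star$) via strong duality and complementary slackness in the forward direction, and certify optimality via complementary slackness in the backward direction. The paper phrases the backward step through the Bombieri inner product $\langle p, q\rangle$, but that computation is exactly your coefficient-matching identity $\binom{n}{\mu}\bigl(t^\mu - \tfrac{1}{n}\sum_i \beta_i \mu_i\bigr)$; the support-equals-$S$ subtlety you flag is real, and the paper glosses over it in the same way (simply positing a $p$ with support exactly $S$ and $\nabla p(\mathbf{1}) = \alpha$).
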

\begin{proof}
    $(\implies)$. Property $(1)$ is immediate.
    For property $(2)$, consider the standard dual linear program, along with an equivalent formulation with slack variables:
    \[
        \min_{\substack{y \in \R^m \\ \sum_{i=1}^m \mu_i y_i \leq t^\mu}} \quad \sum_{i=1}^m \alpha_i y_i \qquad \iff \qquad \min_{\substack{y \in \R^m \\ \sum_{i=1}^m \mu_i y_i + s_\mu = t^\mu \\ s_\mu \geq 0}} \quad \sum_{i=1}^m \alpha_i y_i.
    \]
    Now suppose $p^\star$ is an optimum for the primal program with support $S$, and let $y^\star,s^\star$ be an optimum for the dual program with slack variables.
    Strong duality then implies
    \[
        \alpha \cdot y^\star = \sum_\mu p^\star_\mu t^\mu = \sum_\mu p^\star_\mu \left[\mu \cdot y^\star + s^\star_\mu\right] = \alpha \cdot y^\star + \sum_\mu p^\star_\mu s^\star_\mu \implies \sum_\mu p^\star_\mu s^\star_\mu = 0.
    \]
    Therefore $s^\star_\mu = 0$ for $\mu \in S$, which implies $\sum_{i=1}^m \mu_i y^\star_i = t^\mu$ for $\mu \in S$.
    With this, define
    \[
    \begin{split}
        q(x) := \sum_\mu \binom{n}{\mu} x^\mu \left[t^\mu - \sum_{i=1}^m \mu_i y^\star_i\right] &= (t_1x_1 + \cdots t_mx_m)^n - n \sum_{i=1}^m y^\star_i x_i \sum_\mu \binom{n-1}{\mu-\delta_i} x^{\mu-\delta_i} \\
            &= (t_1x_1 + \cdots t_mx_m)^n - n (y_1^\star x_1 + \cdots y_m^\star x_m) \cdot (x_1 + \cdots x_m)^{n-1}.
    \end{split}
    \]
    Letting $\beta := ny^\star$ completes this direction of the proof.
    
    $(\impliedby)$. Now let $p$ be a polynomial with support $S$ for which $\nabla p(\mathbf{1}) = \alpha$, and let
    \[
        q(x) = (t_1x_1+\cdots+t_mx_m)^n - (\beta_1x_1+\cdots+\beta_mx_m) (x_1+\cdots+x_m)^{n-1}.
    \]
    Using the Bombieri (Fischer-Fock, etc.) inner product, we have
    \[
        0 = \langle p, q \rangle = p(t) - \frac{\beta}{n} \cdot \nabla p(\mathbf{1}) = p(t) - \frac{\alpha \cdot \beta}{n} \implies p(t) = \frac{\alpha \cdot \beta}{n}.
    \]
    Further, for any polynomial $p$ with non-negative coefficients and $\nabla p(\mathbf{1}) = \alpha$ we have
    \[
        0 \leq \langle p, q \rangle = p(t) - \frac{\alpha \cdot \beta}{n} \implies p(t) \geq \frac{\alpha \cdot \beta}{n}.
    \]
    Therefore $p$ minimizes the primal linear program, and this completes this direction of the proof.
\end{proof}

\noindent
For rational gradient, a completely general lower bound in terms of product polynomials is unlikely to hold.
To see this, note the following support condition implied by the above results.

\begin{lemma}
    Fix $n \in \N$ and $\alpha \in \R_+^n$, and let $p \in \R_+^n[x_1,\ldots,x_n]$ be such that $p(\mathbf{1}) = 1$ and $\nabla p(\mathbf{1}) = \alpha$. If $\cpc_{\mathbf{1}}(p) \geq L_n(\alpha)$, then either $\mathbf{1} \in \Newt(p)$ or $\|\mathbf{1}-\alpha\|_1 \geq 2$.
\end{lemma}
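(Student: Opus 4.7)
The plan is to argue by contrapositive. Suppose that $\mathbf{1} \not\in \Newt(p)$ and $\|\mathbf{1}-\alpha\|_1 < 2$; we want to show that $\cpc_{\mathbf{1}}(p) < L_n(\alpha)$. The claim will then follow immediately from an application of two results already in the paper, namely Proposition \ref{prop:cap-Newt} and Proposition \ref{prop:hall-cap-norm}.

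First, since $\mathbf{1} \not\in \Newt(p)$, Proposition \ref{prop:cap-Newt} gives $\cpc_{\mathbf{1}}(p) = 0$. Second, since $\|\mathbf{1}-\alpha\|_1 < 2$ (and recalling that $p(\mathbf{1}) = 1$ together with $\nabla p(\mathbf{1}) = \alpha$ force $\sum_i \alpha_i = n$ by Euler's identity, so that the hypothesis of Proposition \ref{prop:hall-cap-norm} is satisfied), Proposition \ref{prop:hall-cap-norm} yields $L_n(\alpha) = L_n^{\Prod}(\alpha) > 0$. Combining these two observations, $\cpc_{\mathbf{1}}(p) = 0 < L_n(\alpha)$, which is the desired strict inequality. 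Contrapositively, $\cpc_{\mathbf{1}}(p) \geq L_n(\alpha)$ forces either $\mathbf{1} \in \Newt(p)$ or $\|\mathbf{1}-\alpha\|_1 \geq 2$.

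Since both ingredients are already proven earlier in the paper, there is no real obstacle; the only thing to double-check is that the normalization $\sum_i \alpha_i = n$ required to invoke Proposition \ref{prop:hall-cap-norm} is automatic from the hypothesis $p \in \R_+^n[x_1,\ldots,x_n]$ being $n$-homogeneous with $p(\mathbf{1}) = 1$ and $\nabla p(\mathbf{1}) = \alpha$, which it is by Euler's identity.
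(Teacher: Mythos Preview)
Your proof is correct and is essentially the same as the paper's: both arguments combine Proposition \ref{prop:cap-Newt} (relating $\cpc_{\mathbf{1}}(p)>0$ to $\mathbf{1}\in\Newt(p)$) with Proposition \ref{prop:hall-cap-norm} (relating $L_n(\alpha)>0$ to $\|\mathbf{1}-\alpha\|_1<2$). The paper phrases it as a direct case split on whether $\cpc_{\mathbf{1}}(p)>0$, while you take the contrapositive, but the content is identical; your explicit check that $\sum_i\alpha_i=n$ via Euler's identity is a nice touch the paper leaves implicit.
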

\begin{proof}
    If $\cpc_{\mathbf{1}}(p) > 0$, then $\mathbf{1} \in \Newt(p)$ by Proposition \ref{prop:cap-Newt}. Otherwise, $0 = \cpc_{\mathbf{1}}(p) \geq L_n(\alpha)$ which implies $\|\mathbf{1}-\alpha\|_1 \geq 2$ by Proposition \ref{prop:hall-cap-norm}.
\end{proof}

\noindent
That is, a general lower bound by product polynomials is contradicted by the existence of a polynomial with non-negative coefficients for which $\nabla p(\mathbf{1})$ is close to $\mathbf{1}$ but $\mathbf{1} \not\in \Newt(p)$.
Such a polynomial likely exists.
On the other hand, the well-known matroidal support conditions of strongly log-concave polynomials imply that this polynomial cannot be in $\SLC_n(\alpha)$, and so a lower bound by product polynomials is still possible in the strongly log-concave case.

\subsection{Capacity upper bounds} \label{sec:cap-upper-bounds}

Although we are mainly interested in lower bounds on capacity in this paper, upper bounds can also be achieved using similar methods.
In this section we use the results of Section \ref{sec:other-bounds} to prove upper bounds on the capacity of various classes of polynomials.
We present these observations for the interested reader, but say nothing further about them.
The first is a tight bound for $p \in \LC_n(\alpha)$ which is also tight for $\Prod_n(\alpha)$.

\begin{proposition}
    For $n \in \N$, $\alpha \in \R_+^n$, and $p \in \LC_n(\alpha)$, we have
    \[
        \cpc_{\mathbf{1}}(p) \leq \prod_{i=1}^n \alpha_i.
    \]
    This bound is tight over all $p \in \Prod_n(\alpha)$, and hence tightness also holds for $p \in \LC_n(\alpha)$, $p \in \SLC_n(\alpha)$, and $p \in \HStab_n(\alpha)$.
\end{proposition}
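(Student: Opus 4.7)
The plan is to bound $p$ from above at a single, cleverly chosen test point and then feed that into the definition of capacity. The log-concavity of $p$ on the open positive orthant, combined with $p(\mathbf{1}) = 1$ and $\nabla \log p(\mathbf{1}) = \nabla p(\mathbf{1})/p(\mathbf{1}) = \alpha$, yields the first-order supporting hyperplane inequality
\[
\log p(x) \leq \alpha \cdot (x - \mathbf{1})
\]
for every $x$ in the positive orthant. Assuming first that all $\alpha_i > 0$, I would evaluate this at the point $x = (1/\alpha_1, \ldots, 1/\alpha_n)$: the right-hand side becomes $\sum_i \alpha_i (1/\alpha_i) - \sum_i \alpha_i = n - n = 0$ by Euler's identity applied to the $n$-homogeneous $p$. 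Thus $p(1/\alpha) \leq 1$, and feeding this into the definition of capacity gives
\[
\cpc_\mathbf{1}(p) \leq \frac{p(1/\alpha)}{\prod_i (1/\alpha_i)} \leq \prod_{i=1}^n \alpha_i.
\]

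The degenerate case where some $\alpha_j = 0$ is handled separately. Since $p$ has non-negative coefficients and $\partial_{x_j} p(\mathbf{1}) = 0$ is a sum of those non-negative coefficients, every coefficient of $\partial_{x_j} p$ vanishes, so $p$ does not depend on $x_j$. Sending $x_j \to \infty$ while fixing the other variables at $1$ drives $p(x)/(x_1 \cdots x_n)$ to $0$, hence $\cpc_\mathbf{1}(p) = 0 = \prod_i \alpha_i$, and the inequality is trivial.

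For tightness over $\Prod_n(\alpha)$, I would exhibit the most symmetric example available. Let $A \in \Mat_n(\alpha)$ be the matrix whose rows are all equal to $\alpha/n$: the row sums are $\sum_j \alpha_j / n = 1$ and the column sums are $n \cdot \alpha_j/n = \alpha_j$, so this is indeed a valid element of $\Mat_n(\alpha)$. The associated product polynomial is
\[
p(x) = \prod_{i=1}^n (Ax)_i = \left(\frac{\alpha \cdot x}{n}\right)^n,
\]
and Lemma \ref{lem:linear-cap} gives $\cpc_\mathbf{1}(p) = n^n \prod_i (\alpha_i/n) = \prod_i \alpha_i$, matching the bound exactly. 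Since $\Prod_n(\alpha) \subseteq \HStab_n(\alpha) \subseteq \SLC_n(\alpha) \subseteq \LC_n(\alpha)$, the same example witnesses tightness in all of the larger classes.

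There is really no serious obstacle here: the entire argument hinges on the single first-order inequality for concave functions, evaluated at the natural point $1/\alpha$ forced by the condition $\sum_i \alpha_i = n$, and the tightness example is the unique ``uniform'' choice. The only subtlety is the $\alpha_i = 0$ edge case, which is dispatched by the short support argument above.
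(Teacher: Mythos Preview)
Your proof is correct. Both your argument and the paper's rest on the same first-order concavity inequality, but the executions differ slightly. The paper invokes part (2) of Proposition~\ref{prop:other-bounds} (concavity of $p^{1/n}$) to obtain the global pointwise bound $p(x) \leq \bigl(\tfrac{1}{n}\sum_i \alpha_i x_i\bigr)^n$ and then applies Lemma~\ref{lem:linear-cap} to compute the capacity of the right-hand side. You instead use concavity of $\log p$ directly, evaluate the tangent inequality at the single test point $x = (1/\alpha_1,\dots,1/\alpha_n)$, and plug straight into the infimum defining capacity. Your route is marginally more self-contained for the inequality itself (Lemma~\ref{lem:linear-cap} is only needed for tightness), while the paper's route makes the extremal polynomial $(\alpha\cdot x/n)^n$ visible from the outset. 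Your handling of the degenerate case $\alpha_j=0$ is also fine and not spelled out in the paper.
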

\begin{proof}
    By $(2)$ of Proposition \ref{prop:other-bounds} and Lemma \ref{lem:linear-cap}, we have
    \[
        \cpc_{\mathbf{1}}(p) = \cpc_{\mathbf{1}}\left(\left(\frac{1}{n} \sum_{i=1}^n \alpha_i x_i\right)^n\right) = \prod_{i=1}^n \alpha_i.
    \]
    Tightness follows from the fact that $\left(\frac{1}{n} \sum_{i=1}^n \alpha_i x_i\right)^n \in \Prod_n(\alpha)$.
\end{proof}

\noindent
Interestingly this bound does not hold for polynomials in general, for which we obtain a different tight bound.

\begin{proposition}
    For $n \in \N$, $\alpha \in \R_+^n$, and $p \in \R_+^n[x_1,\ldots,x_n]$ such that $p(\mathbf{1}) = 1$ and $\nabla p(\mathbf{1}) = \alpha$, we have
    \[
        \cpc_{\mathbf{1}}(p) \leq \left(\prod_{i=1}^n \alpha_i\right)^{\frac{1}{n}},
    \]
    and this bound is tight.
\end{proposition}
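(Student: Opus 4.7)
The plan is to reduce the upper bound to a single AM-GM calculation by dominating $p$ pointwise by a polynomial whose capacity we can evaluate by hand. The natural candidate is $q(x) := \frac{1}{n}\sum_{i=1}^n \alpha_i x_i^n$, and applying part (1) of Proposition \ref{prop:other-bounds} at $y = \mathbf{1}$ (so that the vector $\gamma$ appearing there equals $\nabla p(\mathbf{1}) = \alpha$) yields immediately
\[
    p(x) \;\leq\; \frac{1}{n} \sum_{i=1}^n \alpha_i x_i^n \qquad \forall\, x \in \R_+^n.
\]
Since this inequality holds pointwise on the positive orthant, I can divide both sides by $x_1 \cdots x_n$ and take the infimum to conclude $\cpc_{\mathbf{1}}(p) \leq \cpc_{\mathbf{1}}(q)$.

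Next I would compute $\cpc_{\mathbf{1}}(q)$ explicitly by applying the unweighted AM-GM inequality to the $n$ positive quantities $\alpha_1 x_1^n, \ldots, \alpha_n x_n^n$:
\[
    \frac{1}{n}\sum_{i=1}^n \alpha_i x_i^n \;\geq\; \left(\prod_{i=1}^n \alpha_i x_i^n\right)^{1/n} \;=\; \Bigl(\prod_{i=1}^n \alpha_i\Bigr)^{1/n}\cdot x_1 \cdots x_n .
\]
Dividing by $x_1 \cdots x_n$ gives $\cpc_{\mathbf{1}}(q) \geq \bigl(\prod_i \alpha_i\bigr)^{1/n}$, and the AM-GM equality case, attained by choosing $x_i = \alpha_i^{-1/n}$ (which makes $\alpha_i x_i^n$ constant in $i$), shows this is an equality. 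Combining this with the pointwise bound gives the desired upper bound on $\cpc_{\mathbf{1}}(p)$.

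For tightness I would take $p = q$ itself. Euler's identity forces $\sum_i \alpha_i = n$ for any $p$ satisfying the hypotheses, so $q(\mathbf{1}) = \frac{1}{n}\sum_i \alpha_i = 1$; a direct computation gives $\partial_j q(\mathbf{1}) = \alpha_j$, so $q$ meets the hypotheses; and the capacity calculation above shows $\cpc_{\mathbf{1}}(q) = \bigl(\prod_i \alpha_i\bigr)^{1/n}$. There is no real obstacle in this argument: Proposition \ref{prop:other-bounds}(1) does all the heavy lifting, and the rest is a one-line AM-GM whose equality case hands us the tight example for free. Conceptually, the reason the bound here is weaker (the geometric mean of the $\alpha_i$'s rather than the product) is precisely that for a general $p \in \R_+^n[x_1,\ldots,x_n]$ one only has the looser part (1) of Proposition \ref{prop:other-bounds} available, instead of the stronger log-concave bound of part (2) that drove the previous proposition.
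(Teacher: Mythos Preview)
Your proof is correct and follows essentially the same route as the paper: both invoke Proposition~\ref{prop:other-bounds}(1) to dominate $p$ by $q(x) = \frac{1}{n}\sum_i \alpha_i x_i^n$ and then compute $\cpc_{\mathbf{1}}(q)$, with tightness witnessed by $p = q$. The only cosmetic difference is that you evaluate $\cpc_{\mathbf{1}}(q)$ via a one-line AM-GM, whereas the paper locates the minimizer by setting the gradient of $\log(q(e^x)/e^{\mathbf{1}\cdot x})$ to zero; both arrive at the same optimal point $x_i = \alpha_i^{-1/n}$.
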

\begin{proof}
    By $(1)$ of Proposition \ref{prop:other-bounds}, we have
    \[
        \cpc_{\mathbf{1}}(p) \leq \cpc_{\mathbf{1}}\left(\frac{1}{n} \sum_{i=1}^n \alpha_i x_i^n\right).
    \]
    We then compute
    \[
        0 = \nabla \log \left(\frac{\frac{1}{n} \sum_{i=1}^n \alpha_i e^{nx_i}}{e^{\mathbf{1} \cdot x}}\right) = \nabla \left[\log\left(\frac{1}{n} \sum_{i=1}^n \alpha_i e^{nx_i}\right) - \mathbf{1} \cdot x\right] \iff \frac{\alpha_i e^{nx_i}}{\frac{1}{n} \sum_{i=1}^n \alpha_i e^{nx_i}} = 1 \quad \forall i.
    \]
    This is equivalent to saying that $\alpha_i e^{nx_i} = \alpha_j e^{nx_j}$ for all $i,j$, which occurs when $x_i = \frac{-\log \alpha_i}{n}$ for all $i$. Plugging this in to the objective function gives
    \[
        \log \cpc_{\mathbf{1}}(p) \leq \log\left(\frac{\frac{1}{n} \sum_{i=1}^n \alpha_i \cdot \alpha_i^{-1}}{(\alpha_1 \cdots \alpha_n)^{-\frac{1}{n}}}\right) = \log\left((\alpha_1 \cdots \alpha_n)^{\frac{1}{n}}\right).
    \]
    Exponentiating gives the bound. Tightness is immediate.
\end{proof}

\noindent
Again, since this paper is predominantly about lower bounds on polynomial capacity, we say nothing further about these upper bounds.

\section*{Acknowledgements}

We are very grateful to Rafael Oliveira and Avi Wigderson for motivating discussions on the topics of the paper.

\bibliographystyle{plain}
\bibliography{bibliography}

\end{document}